\newcommand{\Zb}{\mathbb{Z}}
\newcommand{\LL}{\mathcal{L}}
\newcommand{\Hess}{{\rm Hess}}
\newcommand{\Ric}{{\rm Ric}}
\theoremstyle{definition}
\newtheorem{remark}{Remark}
\newcommand{\vol}{{\rm vol}}
\newcommand{\comment}[1]{} 
\newcommand{\R}{\mathbb{R}}
\newcommand{\Z}{\mathbb{Z}}
\newcommand{\Gammatt}{\widetilde{\Gamma}_2}
\newcommand{\asum}[1]{\widetilde{\sum_{#1}}}
\numberwithin{equation}{section}
\newtheorem{prop}[equation]{Proposition}
\newtheorem{lemma}[equation]{Lemma}
\newtheorem{definition}[equation]{Definition}
\newtheorem{corollary}[equation]{Corollary}
\newtheorem{theorem}[equation]{Theorem}
\begin{document}
\title{Li-Yau inequality on graphs}
\author{Frank Bauer, Paul Horn, Yong Lin,\\ Gabor Lippner,  Dan Mangoubi, Shing-Tung Yau}
\maketitle

\abstract{
We prove the Li-Yau gradient estimate for the heat kernel on graphs. The only assumption is a variant of the curvature-dimension inequality, which is purely local, and can be considered as a new notion of curvature for graphs. We compute this curvature  
for lattices and trees and conclude that it behaves more naturally than the already existing notions of curvature.
Moreover, we show that if a graph has non-negative curvature
then it has polynomial volume growth.

We also derive Harnack inequalities and heat kernel bounds from the gradient estimate, and show how it can be used to strengthen the classical Buser inequality relating the spectral gap and the Cheeger constant of a graph.}

\section{Introduction and main ideas}
In their celebrated work \cite{LiYau} Li and Yau  proved an upper bound on the gradient of positive solutions of the heat equation. In its simplest form, for an $n$-dimensional compact manifold with non-negative Ricci curvature the Li-Yau gradient estimate states that a positive solution $u$ of the heat equation \mbox{$(\Delta - \partial_t)u = 0$} satisfies
\begin{equation}
|\nabla\log u|^2-\partial_t(\log u)=\frac{|\nabla u|^2}{u^2} - \frac{\partial_t u}{u} \leq \frac{n}{2t}. \label{eq:ly}
\end{equation}
The  inequality (\ref{eq:ly}) has been generalized to many important settings in geometric analysis. The most notable one was made by Hamilton on the Ricci flow, see \cite{Ham1,Ham2}.

Finding a discrete version of~\eqref{eq:ly} has proven challenging for a long time. Indeed, inequality~\eqref{eq:ly} is not true even on the lattice $\Zb^n$. The main difficulty for finding a discrete version is that the chain rule \emph{fails} on graphs.
In this paper, we succeed in finding an analogue of inequality~\eqref{eq:ly} on graphs.  The main breakthrough and novelty of this paper as we see it is twofold. First, we show a way to bypass the chain rule in the discrete setting. The way we do it (as explained in \S\ref{sec:chain-rule-idea}), we believe,
may be adapted to many other circumstances.
 Second, we introduce a new natural notion of curvature of graphs, modifying the curvature notion
of~\cite{LinYau}. For example, we are able to prove a discrete version of the Li-Yau inequality where the curvature is bounded from below (by any real number). Also,  we show that
non-negatively curved graphs have polynomial growth.
As far as we are aware of, this result, well known on
Riemannian manifolds, is not known  with any previous notion of curvature on graphs. 

In the next two sections we explain the preceding two ideas in more detail.


%


\subsection{Bypassing the chain rule - discretizing the logarithm}
\label{sec:chain-rule-idea}

In proving the gradient estimate~\eqref{eq:ly} on manifolds,
either by the maximum principle~\cite{LiYau} or by semigroup methods~\cite{BakLed} it is crucial to have the chain rule in hand. Namely, both proofs use a simple but a key identity that follows from the chain rule formula:
\begin{equation}\label{eq:logid}
\Delta \log u = 
 \frac{\Delta u}{u} - |\nabla \log u|^2.
\end{equation}
However, this is false in the discrete setting. Even worse,  there seems to be no way to reasonably bound the difference of the two sides. 

The lack of the chain rule on graphs is the main difficulty in trying to prove a discrete analogue
of~\eqref{eq:ly}.
 In what follows we explain our solution to this issue.
  First, we find a one parameter family of simple identities on manifolds which resembles~\eqref{eq:logid}:  For every 
  $p > 0$ one has
\begin{equation}\label{eq:pid} \Delta u^p =
 pu^{p-1}\Delta u + \frac{p-1}{p} u^{-p}|\nabla u^p|^2.\end{equation}
These also follow from the chain rule. Then, we make the following
crucial observation:
While there exists no chain rule in the discrete setting, 
quite remarkably, identity~\eqref{eq:pid} for $p=1/2$ still holds  on graphs. This fact is the starting point and probably the most important observation of  this paper.

Naively, this means that each time identity~\eqref{eq:logid} is applied in the proof of~\eqref{eq:ly} we may try to replace it by identity~\eqref{eq:pid} with $p=1/2$.
Indeed, this idea starts our work in this paper. 
However, this idea alone is not enough to prove a discrete analogue of~\eqref{eq:ly}: We have to redefine the notion of curvature on graphs as explained in the next section.
\subsection{A new notion of curvature for graphs}
\label{sec:new-curvature-idea}
The second obstacle we have to overcome in proving gradient estimates on graphs is that a proper notion of curvature on graphs is not a priori clear. It is a well known problem to extend the notion of Ricci curvature, or more precisely to define lower bounds for the Ricci curvature in more general spaces than Riemannian manifolds. At present a lot of research has been done in this direction (see e.g. \cite{DK, LinYau, LV,  Oht, Oll, Stu1}).  The approach to generalizing curvature in the context of gradient estimates by the use of curvature-dimension inequalities explained below was pioneered by Bakry and Emery \cite{BakEm}. 

On a Riemannian manifold $M$ Bochner's identity reveals a connection between harmonic functions or, more generally, 
solutions of the heat equation and the Ricci curvature.
It is given by
$$\forall f\in C^{\infty}(M)\quad\frac{1}{2}\Delta |\nabla f|^2 = \langle \nabla f, \nabla \Delta f \rangle + \|\Hess f\|_2^2 +  \Ric(\nabla f, \nabla f).$$
 An immediate consequence of the Bochner identity is that on an $n$-dimensional manifold whose Ricci curvature is bounded from below by $K$ one has
\begin{equation}\label{eq:cd}
\frac{1}{2}\Delta |\nabla f|^2 \geq \langle \nabla f, \nabla \Delta f \rangle + \frac{1}{n}(\Delta f)^2 + K |\nabla f|^2,
\end{equation} 
which is called the \emph{curvature-dimension inequality} (CD-inequality).  It was an important insight by Bakry and Emery \cite{BakEm} that one can use it as a substitute for the lower Ricci curvature bound on spaces where a direct generalization of Ricci curvature is not available. 

Since all known proofs of the Li-Yau gradient estimate exploit non-negative curvature condition through the  CD-inequality~\eqref{eq:cd}, one would believe it is a natural choice in our case as well. Bakry and Ledoux \cite{BakLed} succeed to use it to generalize~\eqref{eq:ly} to Markov operators on general measure spaces when the operator satisfies a chain rule type formula.

 As we have explained in Section~\ref{sec:chain-rule-idea} there is no chain rule in 
the discrete setting.
However, due to formula~\eqref{eq:pid} with $p=1/2$ which  compensates
for the lack of the chain rule, we succeed to modify  the standard CD-inequality on graphs in order to define a \emph{new} curvature notion on graphs (cf.\ \S\ref{sec:cd}) which we can use to prove
a discrete gradient estimate in Theorem~\ref{thm:compactgre}.

One may argue that as we modify the curvature notion
it might not be natural anymore. In fact, we show it is natural
in several respects: First, we prove
that our modified CD-inequality follows from 
the classical one in situations where the chain rule does hold (Theorem~\ref{thm:cd->cde}).
Second, we compute it in several examples (\S\ref{sec:examples}) to show it gives
reasonable results. In particular, 
 we show that trees can have negative curvature $K$ with $|K|$ arbitrarily large. So far the existing notions of curvature~\cite{LinYau, Oll} always gave $K\geq -2$ for trees.
 Third, as  mentioned above, we derive polynomial volume growth for graphs satisfying
 non-negative curvature condition, 
 like on manifolds (Corollary~\ref{cor:polynomial-growth}),
 and it seems to be a first result of this kind on graphs.

\subsection{Background on the parabolic Harnack 
inequality on graphs}


Inequality~\eqref{eq:ly} can be integrated over space-time, and some new distance function on space-time can be introduced to measure the ratio of the positive solution at different points:
\begin{equation} \label{eq:harnack}
u(x,s) \leq C(x,y,s,t) u(y,t)\ ,
\end{equation} where $C(x,y,s,t)$ depends only on the distance of $(x,s)$ and $(y,t)$ in space-time. Using this, \cite{LiYau} also gave a sharp estimate of the heat kernel in terms of such a distance function.
 
The Harnack inequality~\eqref{eq:harnack} has many applications. Besides implying bounds on the heat kernel, it can be used to prove eigenvalue estimates, and it is one of the main techniques in the regularity theory of PDEs. Hence it is important to decide what manifolds satisfy such an inequality.  Grigor'yan~\cite{Gri} and Saloff-Coste~\cite{SC} gave a complete characterization of such manifolds. They showed that satisfying a volume doubling property  along with a Poincar\'e inequality is actually equivalent to satisfying the Harnack inequality. The characterization by Grigor'yan and Saloff-Coste generalizes the non-negative Ricci curvature condition by Li and Yau, since it is known from the work of Buser \cite{Buser} that a lower bound on the Ricci curvature implies volume doubling and the  Poincar\'e inequality.  However, a major drawback of the characterization by Grigr'yan and Saloff-Coste  is that showing that a manifold satisfies these properties is rather difficult as both volume doubling and the Poincar\'e inequality are global in nature. The results in~\cite{LiYau}  have the advantage that a simple local condition, a lower bound on curvature, is sufficient to guarantee that the more global properties hold.

In the case of graphs Delmotte~\cite{Delmotte} proved a characterization analogous to that of Grigor'yan and Saloff-Coste. However, just as for manifolds, his conditions are hard to verify because of their global nature. One virtue of our results is that they give \emph{local} conditions that imply Harnack type inequalities.
 \vspace{2ex}


\textbf{Organization of the paper.}  In Section~\ref{sec:setup} we set up the scope of this paper.
In Section~\ref{sec:cd} we define our notion of curvature by modifying the standard curvature-dimension inequality, and we study the basic properties of the curvature. In particular, we show that
on manifolds the modified CD-inequality follows from the classical one. 
Our main results are contained in Section \ref{gradest}, where we establish the discrete analogue of the gradient estimate~\eqref{eq:ly}.  In Section \ref{harnack}, we use the gradient estimates to derive Harnack inequalities.
Section~\ref{sec:examples} contains curvature computations for certain classes of graphs. In particular we give a general lower bound for graphs with bounded degree and show that this bound is asymptotically sharp in the case of trees.  We also show that lattices, and more generally Ricci-flat graphs in the sense of Chung and
Yau \cite{ChY}, have non-negative curvature. 
Finally, in Section \ref{app} we apply our results to derive heat kernel bounds, polynomial volume growth
 and prove a Buser-type eigenvalue estimate.

\section{Setup and notations}\label{sec:setup}

First we fix our notation. Let $G=(V,E)$ be a graph.  We allow the edges on the graph to be weighted; that is, the edge $xy$ from $x$ to $y$ has weight $w_{xy} > 0$.  We do not require that the edge weights be symmetric, so $w_{xy} \neq w_{yx}$ in general, for the proofs of the main theorems, but our key examples satisfying the curvature condition do have symmetric weights.  We do, however, require that
\[
\inf_{e \in E} w_e =: w_{min} > 0.  
\]  
Moreover we assume in the following that the graph is locally finite, i.e. $\deg(x):=\sum_{y\sim x}w_{xy}<\infty$ for all $x\in V$. 

Given a finite measure $\mu: V \to \mathbb{R}$ on $V$, the $\mu$-Laplacian on $G$ is  the operator $\Delta: \mathbb{R}^{|V|} \to \mathbb{R}^{|V|}$ 
 defined by
\[
\Delta f (x) = \frac{1}{\mu(x)} \sum_{y \sim x} w_{xy}(f(y) - f(x)).
\]
Since such averages will appear numerous times in computations, we introduce an abbreviated notation for ``averaged sum'':  For a vertex $x \in V$,
\[ \asum{y \sim x} h(y) := \frac{1}{\mu(x)} \sum_{y \sim x} w_{xy} h(y).
\] 

Given a graph and measure, we define
\[
D_w = \max_{\substack{x,y \in V\\x \sim y}}\frac{\deg(x)}{w_{xy}}
\]
and
\[
D_\mu = \max_{x \in V} \frac{\deg(x)}{\mu(x)}.
\]

So far as is possible, we will treat $\mu$-Laplacian operators generally.  The
special cases of most interest, however, are the cases where $\mu \equiv 1$ which is
the standard graph Laplacian, and the case where $\mu(x) = \sum_{y \sim x} w_{xy}=\deg(x)$, 
which yields the normalized graph Laplacian.  In the case where the edges are 
unweighted, we take $w_{xy} \equiv 1$.  Throughout the remainder of the paper, we will
simply refer to the $\mu$-Laplacian as the Laplacian, except when it is important
to emphasize the effect of the measure.

In this paper, we are interested in functions $u : V \times [0,\infty) \to \R$ that are solutions of the heat equation.  Let us introduce the operator 
\[ \LL = \Delta - \partial_t. \]
We say that $u(x,t)$ is a positive solution to the heat equation, if $u > 0$ and $\LL u =0$. It is not hard to see that such solutions can be written as $u(x,t) = P_tu_0$ where $P_t = e^{t\Delta}$ is the heat kernel and
$u_0 = u(\cdot, 0)$.   Note that the heat equation of course also depends on the measure $\mu$, through the Laplacian it contains.  

\section{Curvature-dimension inequalities} \label{sec:cd}

In this section we introduce a new version of the CD-inequality, which is one of the key steps in deriving analogues of the Li-Yau gradient estimate. We also compare our new notion to the standard CD-inequality. First we need to recall \cite{BakLed} the definition of two natural bilinear forms associated to the Laplacian.

\begin{definition}
The gradient form $\Gamma$ is defined by
\begin{align*}
2\Gamma(f,g)(x) & = \big(\Delta(f \cdot g) - f \cdot \Delta(g) - \Delta(f) \cdot g\big)(x) = \\ &= \frac{1}{\mu(x)} \sum_{y \sim x} w_{xy} (f(y) - f(x))(g(y)-g(x)).
\end{align*}
We write $\Gamma(f) = \Gamma(f,f)$.
\end{definition}

Similarly,
\begin{definition}
The iterated gradient form is defined by
\[ 
2\Gamma_2(f,g) = \Delta \Gamma(f,g)  - \Gamma(f, \Delta g) - \Gamma(\Delta f, g),
\]
We write $\Gamma_2(f) = \Gamma_2(f,f)$.
\end{definition}

\begin{definition}
We say that a graph $G$ satisfies the CD-inequality  $CD(n,K)$ if, for any function $f$

\[
\Gamma_2 (f) \geq \frac{1}{n} (\Delta f)^2 + K\Gamma(f). \] Note that this is exactly the CD-inequality in \eqref{eq:cd} written in the $\Gamma$ notation. 
$G$ satisfies $CD(\infty,K)$ if
\[
\Gamma_2(f) \geq K\Gamma(f).  
\]

\end{definition}

The main example in the definition below is the
Laplace-Beltrami operator on a  Riemannian manifold:
\begin{definition} The semigroup $P_t  = e^{t \Delta} $ is said to be a \textit{diffusion semigroup} if the following identities are satisfied for any smooth function $\Phi$: 
\begin{align}
\Gamma(f, g \cdot h) & = g \cdot \Gamma(f,h)+ h \cdot \Gamma(f,g) \\
\Gamma(\Phi\circ f, g) & = \Phi'(f) \Gamma(f,g) \label{eq:chvar1} \\ 
\Delta(\Phi\circ f) &= \Phi'(f) \Delta(f) + \Phi''(f) \Gamma(f). \label{eq:chvar2}
\end{align}

Bakry and Ledoux \cite{BakLed} show that if the operator $\Delta$ satisfying $CD(n,0)$ generates a diffusion semigroup then the gradient estimate~\eqref{eq:ly} holds.  
\end{definition}

The Laplacian $\Delta$ we are interested in does \emph{not} generate a diffusion semigroup, but remarkably, as we mentioned in the introduction,  for the choice of $\Phi(f)= \sqrt{f}$ a key formula similar to a combination of \eqref{eq:chvar1} and \eqref{eq:chvar2} still holds:
\begin{equation}\label{eq:keyformula}
 2 \sqrt{u} \Delta \sqrt{u} =\Delta u  - 2\Gamma(\sqrt{u}).
\end{equation}
 
 This motivates the following key modification of the CD-inequality.

\begin{definition}
\label{def:cde}
We say that a graph $G$ satisfies the {\it exponential curvature dimension inequality} at the point $x\in V$, $CDE(x, n, K)$ if  for  any positive function $f : V\to \R$ such that $(\Delta f)(x) <0$ we have
$$\Gamma_2(f)(x)  - \Gamma\left(f, \frac{\Gamma(f)}{f}\right)(x) \geq \frac{1}{n} (\Delta f)(x)^2 + K \Gamma(f)(x)\ .$$
We say that $CDE(n, k)$ is satisfied if $CDE(x, n, K)$ is satisfied for all $x\in V$.
%
\end{definition}

\begin{remark}
For convenience, we set
\begin{equation} \Gammatt(f) := \Gamma_2(f) - \Gamma\left(f, \frac{\Gamma(f)}{f}\right) \ .
\end{equation}
By~\eqref{eq:keyformula}
\begin{equation}
\label{eq:Gamma2-equivalence}
\Gammatt(f)= \frac{1}{2}\Delta \Gamma(f)  - \Gamma\left(f, \frac{\Delta(f^2)}{2f}\right)\ .
\end{equation}
\end{remark}

\begin{remark}
An important aspect of both $CD(n,k)$ and $CDE(n,k)$ is that they are local properties.  That is, satisfying $CD(n,k)$ or $CDE(n,k)$ at a point depends only
on the second neighborhood of a vertex.  Thus, in principle, it is possible to classify all (unweighted) graphs
which satisfy $CDE(n,k)$ and have maximum degree at most $D$.

Of course, one hopes that typical graphs which one might consider to have non-negative curvature satisfy $CDE(n,0)$ for some ``dimension" $n$ .  As we will show in Section~\ref{sec:examples}, the class of Ricci-flat graphs~\cite{ChY}, which includes abelian Cayley graphs and most
notably the lattices $\mathbb{Z}^d$ (along with finite tori) do indeed satisfy $CDE(2d,0)$.  
\end{remark}

\begin{remark}
 The reason we chose the adjective ``exponential" in Definition~\ref{def:cde} is revealed in Lemma~\ref{lem:gamma-gamma2} below. 
\end{remark}

\begin{lemma}
\label{lem:gamma-gamma2}
 If the semigroup generated by $\Delta$ is a diffusion semigroup,
 then for any positive function $f$ one has
 $$\Gammatt(f)=f^2\Gamma_2(\log f)\ .$$
\end{lemma}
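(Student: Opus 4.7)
The plan is to reduce everything to $f$ via the substitution $g = \log f$ and the diffusion identities~\eqref{eq:chvar1}, \eqref{eq:chvar2}. Applying these with $\Phi = \log$ (i.e.\ $\Phi'(f) = 1/f$, $\Phi''(f) = -1/f^2$) yields three basic formulas that I will use throughout:
\[
\Gamma(\log f, h) = \tfrac{1}{f}\Gamma(f,h), \qquad \Gamma(\log f) = \tfrac{1}{f^2}\Gamma(f), \qquad \Delta \log f = \tfrac{\Delta f}{f} - \tfrac{\Gamma(f)}{f^2}.
\]
These are exactly the identities used in the classical Li--Yau proof on manifolds; here they follow immediately from the diffusion property.

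Next I would expand $2\Gamma_2(\log f) = \Delta\Gamma(\log f) - 2\Gamma(\log f, \Delta \log f)$ using the three identities above. For the first piece, $\Delta\Gamma(\log f) = \Delta\bigl(\Gamma(f)/f^2\bigr)$, I would use the product rule $\Delta(uv) = u\Delta v + v\Delta u + 2\Gamma(u,v)$ (valid in any $\Gamma$-calculus, independent of diffusion) and then apply~\eqref{eq:chvar1},~\eqref{eq:chvar2} with $\Phi(x) = 1/x^2$ to handle $\Delta(1/f^2)$ and $\Gamma(h,1/f^2)$. For the second piece, bilinearity lets me split $\Gamma(\log f, \Delta \log f) = (1/f)\Gamma(f, \Delta f / f) - (1/f)\Gamma(f, \Gamma(f)/f^2)$, and each of these expands by bilinearity plus the chain rule applied to $\Phi(x) = 1/x$ or $1/x^2$.

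The main obstacle will be purely bookkeeping: after expansion one collects terms of the form $\Delta\Gamma(f)$, $\Gamma(f,\Delta f)$, $\Gamma(f,\Gamma(f))$, $\Delta f\cdot \Gamma(f)$, and $\Gamma(f)^2$, divided by various powers of $f$. I expect the $\Delta\Gamma(f)$ and $\Gamma(f,\Delta f)$ terms to combine cleanly into $\tfrac{2}{f^2}\Gamma_2(f)$ via the defining identity $2\Gamma_2(f) = \Delta\Gamma(f) - 2\Gamma(f,\Delta f)$, while the $\Delta f\cdot \Gamma(f)$ contributions should cancel outright. The residue, after multiplying by $f^2/2$, should be
\[
f^2 \Gamma_2(\log f) = \Gamma_2(f) + \tfrac{\Gamma(f)^2}{f^2} - \tfrac{1}{f}\Gamma(f, \Gamma(f)).
\]

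Finally, I would compute $\Gamma\bigl(f, \Gamma(f)/f\bigr)$ separately by bilinearity plus the diffusion rule for $\Phi(x) = 1/x$: this gives $\Gamma(f,\Gamma(f)/f) = \tfrac{1}{f}\Gamma(f,\Gamma(f)) - \tfrac{\Gamma(f)^2}{f^2}$. Substituting into the previous display matches the right-hand side of the definition of $\Gammatt(f)$ exactly, proving $\Gammatt(f) = f^2 \Gamma_2(\log f)$. The whole argument is really just the standard Riemannian identity $\Gamma_2(\log f) = f^{-2}\bigl(\Gamma_2(f) - \Gamma(f, \Gamma(f)/f)\bigr)$, now derived formally from the three diffusion identities rather than from coordinate computations.
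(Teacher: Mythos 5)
Your proposal is correct and follows essentially the same route as the paper's proof: expand $2\Gamma_2(\log f)=\Delta\Gamma(\log f)-2\Gamma(\log f,\Delta\log f)$ via the diffusion identities and the product rule, observe that the $\Delta f\cdot\Gamma(f)$ terms cancel, and arrive at $f^2\Gamma_2(\log f)=\Gamma_2(f)-\tfrac{1}{f}\Gamma(f,\Gamma(f))+\tfrac{\Gamma(f)^2}{f^2}$, which is exactly $\Gammatt(f)$ after expanding $\Gamma\bigl(f,\Gamma(f)/f\bigr)$. Your stated intermediate and final identities match the paper's computation term for term.
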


\begin{proof}  We compute
\begin{align*}
2\Gamma_2(\log f) =&  \Delta \Gamma(\log f) - 2\Gamma(\log f, \Delta \log f) \\=&\Delta\left( \frac{\Gamma(f)}{f^2}\right) - \frac{2}{f}\Gamma\left(f, \frac{\Delta f}{f} - \frac{\Gamma(f)}{f^2}\right) \\ =&
 \frac{\Delta \Gamma(f)}{f^2} + 2\Gamma \left(\frac{1}{f^2}, \Gamma(f)\right) + \Gamma(f) \Delta \left(\frac{1}{f^2}\right) - \frac{2\Gamma(f,\Delta f)}{f^2} - \frac{2 \Delta f}{f}\Gamma(f, f^{-1}) \\ &+ \frac{2}{f^3}\Gamma(f, \Gamma(f)) + \frac{2 \Gamma(f)}{f} \Gamma(f, f^{-2})\\ = 
 &\frac{2\Gamma_2(f)}{f^2} - \frac{4}{f^3}\Gamma(f,\Gamma(f)) - 2\Gamma(f) \frac{\Delta(f)}{f^3} + 6 \frac{\Gamma(f)^2}{f^4} + \frac{2}{f^3}\Delta f \Gamma(f)\\& + \frac{2}{f^3} \Gamma(f,\Gamma(f)) - 4\frac{\Gamma(f)^2}{f^4}  = \frac{2\Gamma_2(f)}{f^2} - \frac{2}{f^3}\Gamma(f,\Gamma(f)) + 2 \frac{\Gamma(f)^2}{f^4} \\=& \frac{2\Gamma_2(f)}{f^2} - \frac{2}{f^3}\Gamma(f,\Gamma(f)) - 2 \frac{\Gamma(f)}{f^2}\Gamma(f, f^{-1}) \\ =& \frac{2}{f^2}\left( \Gamma_2(f)- \Gamma\left(f, \frac{\Gamma(f)}{f}\right) \right)=
 \frac{2\Gammatt(f)}{f^2}\ .
\end{align*}
\end{proof}

\begin{theorem}
\label{thm:cd->cde}
 If the semigroup generated by $\Delta$ is a diffusion semi\-group,  then the condition $CD(n,K)$ implies $CDE(n,K)$. \label{prop:imply}
\end{theorem}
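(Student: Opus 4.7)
The plan is to reduce $CDE(n,K)$ for a positive function $f$ to the classical $CD(n,K)$ applied to $\log f$, using the identity already proved in Lemma~\ref{lem:gamma-gamma2} that rewrites $\Gammatt(f)$ as $f^2 \Gamma_2(\log f)$ in the diffusion setting. Concretely, I would fix $x \in V$ and a positive $f$ with $(\Delta f)(x) < 0$, then apply $CD(n,K)$ to $\log f$:
\[
\Gamma_2(\log f) \;\geq\; \tfrac{1}{n}(\Delta \log f)^2 + K\,\Gamma(\log f),
\]
and multiply through by $f^2$. The left-hand side becomes $\Gammatt(f)$ by Lemma~\ref{lem:gamma-gamma2}, and the second term on the right becomes $K\,\Gamma(f)$ using the diffusion chain rule $\Gamma(\log f) = \Gamma(f)/f^2$. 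So after these rewrites the inequality reads
\[
\Gammatt(f) \;\geq\; \tfrac{1}{n}\bigl(f\,\Delta \log f\bigr)^2 + K\,\Gamma(f).
\]

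The last step is to show $(f\Delta \log f)^2 \geq (\Delta f)^2$ at $x$, which is where the hypothesis $(\Delta f)(x) < 0$ from the definition of $CDE$ enters. Using identities~\eqref{eq:chvar1} and~\eqref{eq:chvar2} applied to $\Phi = \log$, one gets
\[
\Delta \log f \;=\; \frac{\Delta f}{f} \;-\; \frac{\Gamma(f)}{f^2},
\qquad\text{so}\qquad
f\,\Delta \log f \;=\; \Delta f \;-\; \frac{\Gamma(f)}{f}.
\]
Since $\Gamma(f) \geq 0$ and $f > 0$, the quantity $\Gamma(f)/f$ is nonnegative, hence $f\,\Delta\log f \leq \Delta f$. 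When $(\Delta f)(x) < 0$, both sides are negative and $f\,\Delta \log f$ is even more negative, so squaring reverses to give $(f\,\Delta\log f)^2 \geq (\Delta f)^2$ at $x$. Plugging this into the displayed inequality yields $CDE(x,n,K)$.

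The only genuinely delicate part of the argument is the last sign-chasing step: it is precisely the sign restriction $(\Delta f)(x) < 0$ in Definition~\ref{def:cde} that lets one discard the correction term $\Gamma(f)/f$ in a favorable direction, so this should be flagged clearly in the write-up as the place where the ``exponential'' variant is weaker than the classical $CD(n,K)$ condition on $\log f$. Once that comparison is in hand, the rest is bookkeeping built on Lemma~\ref{lem:gamma-gamma2} and the diffusion identities.
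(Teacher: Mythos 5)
Your proposal is correct and follows essentially the same route as the paper: apply Lemma~\ref{lem:gamma-gamma2} together with $CD(n,K)$ for $\log f$, then use $f\Delta\log f = \Delta f - \Gamma(f)/f \leq \Delta f < 0$ and square. The sign-chasing step you flag is exactly where the paper's argument also uses the hypothesis $(\Delta f)(x)<0$, so nothing is missing.
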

\begin{proof}
Let $f$ be a positive function such that $(\Delta f)(x)<0$.
By Lemma~\ref{lem:gamma-gamma2}
\begin{equation}
\label{ineq:lemma-step}
\Gammatt(f) = f^2 \Gamma_2(\log f) \geq f^2\left(\frac{1}{n}(\Delta \log f)^2 + K\cdot \Gamma(\log f)\right)
=\frac{1}{n}f^2(\Delta\log f)^2+K\Gamma(f)\ .
\end{equation}
On the other hand,
\begin{equation}
\label{ineq:f-delta-log-f}
f(x)\Delta(\log f)(x) = (\Delta f)(x) - \frac{\Gamma(f)(x)}{f(x)} \leq (\Delta f)(x)<0\ ,
\end{equation}
Squaring~\eqref{ineq:f-delta-log-f} and inserting the result in~\eqref{ineq:lemma-step}
yields
$$\Gammatt(f)(x)\geq \frac{1}{n}(\Delta f)(x)^2+K\Gamma(f)(x)\ .$$
\end{proof}

\begin{remark}
In light of Lemma~\ref{lem:gamma-gamma2} it is tempting to define a graph to satisfy  the condition $CDE'(n,K)$ if for all $f > 0$,
\[\Gammatt(f) \geq \frac{1}{n} f^2(\Delta \log f)^2 + K\Gamma(f),\]
and use this (which implies $CDE(n,K)$) instead of $CDE$.  Indeed, in the case of diffusion semigroups $CD(n,K)$ and $CDE'(n,K)$ are equivalent.

Rather interestingly, making such a definition in the graph case loses something:  First, as we show below in Theorem \ref{thm:ricciflat}, the integer grid $\Z^d$ satisfies $CDE(2d,0)$.  On the other hand, it only satisfies $CDE'(4.53d,0)$ and this dimension constant essentially cannot be improved.  Second, it turns out that some graphs (and, in particular, regular trees) do not satisfy $CDE'(n,-K)$ for any $K > 0$.  In contrast, we show in Theorem \ref{thm:lb} below that all graphs satisfy $CDE(2,-K)$ for some $K > 0$. 
\end{remark}

\section{Gradient estimates} \label{gradest}
In this section we prove discrete analogues of the Li-Yau gradient estimate~\eqref{eq:ly} for graphs satisfying the CDE-inequality.
\subsection{Preliminaries}
 The following lemma, describing the behavior of a function near its local maximum, will be used repeatedly throughout the whole section. 

\begin{lemma}\label{lemma:L(gF)} Let $G(V,E)$ be a (finite or infinite) graph, and let $g ,F : V \times [0,T] \to \R$ be functions. Suppose that $g(x,t) \geq 0$, and $F(x,t)$ has a local maximum at $(x^*,t^*) \in V \times [0,T]$.  Further assume $t^* \neq 0$. Then
\[ \LL(gF)(x^*,t^*) \leq (\LL g)F(x^*,t^*).\]
\end{lemma}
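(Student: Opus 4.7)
The plan is to split $\mathcal{L} = \Delta - \partial_t$ into its spatial and temporal parts and verify the desired inequality for each piece separately. Writing everything at the point $(x^*, t^*)$ and using the definition of $\Delta$, a direct expansion gives the discrete product identity
\[
\Delta(gF)(x^*) - (\Delta g)(x^*)\, F(x^*) = \asum{y \sim x^*} g(y)\bigl(F(y) - F(x^*)\bigr),
\]
where I have suppressed the time argument $t^*$. The key observation is that the extra term $g(x^*) F(x^*)$ coming from $g(y) F(y) - g(x^*) F(x^*)$ cancels exactly with what is needed to convert $g(y) - g(x^*)$ into $g(y)$ in the Leibniz remainder. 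Since $F(\cdot, t^*)$ attains a local max at $x^*$, every factor $F(y) - F(x^*)$ is $\leq 0$, and since $g(y) \geq 0$ and $w_{x^*y} > 0$, the whole sum is non-positive. Hence
\[
\Delta(gF)(x^*, t^*) \leq (\Delta g)(x^*, t^*)\, F(x^*, t^*).
\]

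For the time part, the usual product rule holds pointwise, so
\[
\partial_t(gF)(x^*, t^*) - (\partial_t g)(x^*, t^*)\, F(x^*, t^*) = g(x^*, t^*)\, \partial_t F(x^*, t^*).
\]
Because $(x^*, t^*)$ is a local maximum of $F$ and $t^* \neq 0$, the point $t^*$ is either interior to $[0,T]$, in which case $\partial_t F(x^*, t^*) = 0$, or $t^* = T$, in which case $\partial_t F(x^*, t^*) \geq 0$ (otherwise $F$ would be larger slightly before $T$). Combined with $g \geq 0$, we conclude $g(x^*, t^*)\, \partial_t F(x^*, t^*) \geq 0$, and therefore
\[
-\partial_t(gF)(x^*, t^*) \leq -(\partial_t g)(x^*, t^*)\, F(x^*, t^*).
\]

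Adding the two displayed inequalities gives exactly $\mathcal{L}(gF)(x^*, t^*) \leq (\mathcal{L}g)(x^*, t^*)\, F(x^*, t^*)$, as required. There is no real obstacle here; the only delicate point worth double-checking is the boundary case $t^* = T$, where one needs that "local maximum" is understood relative to $V \times [0,T]$ so that $\partial_t F \geq 0$ still holds (as opposed to an unconstrained local max forcing $\partial_t F = 0$). The hypothesis $t^* \neq 0$ is precisely what rules out the opposite-sign case at the other endpoint.
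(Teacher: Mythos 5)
Your proof is correct and follows essentially the same route as the paper: bound the spatial part using $F(y,t^*)\le F(x^*,t^*)$ together with $g\ge 0$, and handle the time derivative via the product rule with $\partial_t F=0$ at an interior maximum and $\partial_t F\ge 0$ at $t^*=T$. Writing the spatial step as the exact identity $\Delta(gF)-(\Delta g)F=\asum{y\sim x^*}g(y)(F(y)-F(x^*))$ is only a cosmetic repackaging of the paper's term-by-term estimate.
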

\begin{proof}

\begin{align*}
\Delta (gF)(x^*,t^*) &= \frac{1}{\mu(x^*)} \sum_{y \sim x^*} w_{x^*y}(g(y,t^*) F(y,t^*) - g(x^*,t^*) F(x^*,t^*)) \\
&\leq \frac{1}{\mu(x^*)} \sum_{y \sim x^{*}}  w_{x^*y}(g(y,t^*)F(x^*,t^*) -  g(x^*,t^*)F(x^*,t^*)) \\
&= (\Delta g)F(x^*,t^*).
\end{align*}
Similarly 
\begin{align*}
\partial_t(gF)(x^*,t^*) = (\partial_t g)F(x^*,t^*) + g(\partial_t F)(x^*,t^*) \geq (\partial_t g)F(x^*,t^*),
\end{align*}
since $\partial_t F = 0$ at the local maximum if $0 < t^* < T$ and $\partial_t F \geq 0$ if $t^* = T$. The last claim is just the difference of the previous two.
\end{proof}

For convenience, we also record here some simple facts which we use repeatedly in 
our proofs of the gradient estimates.  

\begin{lemma} \label{prop:easy}
Suppose $f: V \to \mathbb{R}$ satisfies $f> 0$, and $(\Delta f)(x) < 0$ at
some vertex $x$.  
Then
\begin{alignat*}{2}
(i) &\;\;\;\;\;\;\;\;  \max_{y \sim x} \frac{w_{xy}}{\mu(x)} f(y) \leq &\asum{y \sim x} f(y) &< D_\mu f(x).  \\
(ii) && \asum{y \sim x} f^2(y) &<  D_\mu D_w f^2(x).\\
(iii)&&  \asum{y \sim x} f^4(y) &< D_\mu D_w^3 f^4(x). \\
\end{alignat*}
\end{lemma}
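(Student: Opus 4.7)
The plan is to handle (i) first, then extract from (i) an auxiliary pointwise bound at each neighbor of $x$, and finally use that pointwise bound together with (i) to obtain (ii) and (iii) by a ``max times average'' argument.

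For (i), the first inequality is immediate: since $f>0$ and $w_{xy}>0$, each summand $\frac{w_{xy}}{\mu(x)}f(y)$ in the averaged sum is positive, so no individual summand exceeds the full sum. For the second inequality, I would expand
$$\Delta f(x) = \asum{y\sim x} f(y) - \frac{\deg(x)}{\mu(x)} f(x),$$
so that the hypothesis $(\Delta f)(x)<0$ rearranges to $\asum{y\sim x} f(y) < \frac{\deg(x)}{\mu(x)} f(x) \leq D_\mu f(x)$ by the definition of $D_\mu$.

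The key auxiliary fact I would isolate next is the \emph{pointwise} bound $f(y) < D_w f(x)$ for every $y\sim x$. To see this, multiply the previous inequality through by $\mu(x)$ to obtain $\sum_{y\sim x} w_{xy}f(y) < \deg(x) f(x)$. Every term on the left is positive, so any single term is bounded by the whole sum: for any fixed $y_0\sim x$, $w_{xy_0}f(y_0) < \deg(x) f(x)$, and dividing by $w_{xy_0}$ and invoking $\deg(x)/w_{xy_0}\leq D_w$ yields the claim.

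With this in hand, (ii) and (iii) follow by pulling out powers of $\max_{y\sim x}f(y)$ from the averaged sum. For instance,
$$\asum{y\sim x} f^2(y) \leq \Bigl(\max_{y\sim x} f(y)\Bigr)\asum{y\sim x} f(y) < D_w f(x)\cdot D_\mu f(x) = D_\mu D_w\, f^2(x),$$
and analogously $\asum{y\sim x} f^4(y) \leq \bigl(\max_{y\sim x}f(y)\bigr)^3 \asum{y\sim x} f(y) < D_w^3 f(x)^3 \cdot D_\mu f(x)$, which is exactly $D_\mu D_w^3\, f^4(x)$. I do not anticipate any real obstacle here; the one mild subtlety worth flagging is that the pointwise bound $f(y)<D_w f(x)$ has to be extracted from a single term of the \emph{raw} weighted sum (which is where $D_w$ comes in) rather than from the averaged sum (which would only produce $D_\mu$).
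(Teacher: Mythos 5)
Your proof is correct and in substance the same as the paper's: both reduce (ii) and (iii) to (i) via the positivity of the summands, the only cosmetic difference being that the paper bounds $\widetilde{\sum}_{y\sim x} f^2(y)$ by $\frac{\mu(x)}{\min_{y\sim x} w_{xy}}\bigl(\widetilde{\sum}_{y\sim x} f(y)\bigr)^2$ (squaring the averaged sum), whereas you first extract the pointwise bound $f(y)<D_w f(x)$ and multiply it into the average. The paper itself uses exactly your pointwise bound later (citing part (i)), so your organization is entirely consistent with theirs.
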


\begin{proof}
$(i)$ is obvious as $f > 0$.  $(ii)$ follows as
\[
\asum{y \sim x} f^2(y) \leq \frac{\mu(x)}{\min_{y \sim x} w_{xy}} \left( \asum{y \sim x} f(y) \right)^2  < D_\mu D_w f^2(x).
\]
(iii) follows similarly to (ii).  
\end{proof}

\subsection{Estimates on finite graphs}

We begin  by proving the gradient estimate in the compact case without boundary.   That is, we prove gradient estimates valid for positive solutions to parabolic equations on finite graphs.

\begin{theorem}\label{thm:compactgre} Let $G$ be a finite graph satisfying $CDE(n,0)$, and let $u$ be a positive solution to the heat equation on $G$. Then for all $t>0$
\[ \frac{\Gamma(\sqrt{u})}{u} - \frac{\partial_t(\sqrt{u})}{\sqrt{u}} \leq \frac{n}{2t}\ .\]
\end{theorem}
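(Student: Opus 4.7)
The plan is to bound $t\phi \leq n/2$ by a maximum principle argument, where $\phi := \Gamma(\sqrt{u})/u - \partial_t(\sqrt{u})/\sqrt{u}$. Setting $v := \sqrt{u}$, combining the heat equation with the key formula~\eqref{eq:keyformula} gives the identity $\partial_t v = \Delta v + \Gamma(v)/v$, which acts as the promised substitute for the chain rule and simplifies $\phi = -\Delta v/v$. The goal is therefore $-t\Delta v \leq (n/2)v$.

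Since $G$ is finite, $F := t\phi$ attains a maximum $M$ on $V \times [0,T]$ at some $(x^*,t^*)$. If $M \leq 0$ we are done; otherwise $t^* > 0$ (since $F \equiv 0$ at $t=0$) and $\phi(x^*,t^*) > 0$, whence $\Delta v(x^*,t^*) < 0$ and CDE$(n,0)$ applies at that point.

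The first computation I would carry out is the identity
$$\LL\Gamma(v) = 2\Gammatt(v),$$
which drops out of $\partial_t\Gamma(v) = 2\Gamma(v,\partial_t v)$, the definition $2\Gamma_2(v) = \Delta\Gamma(v) - 2\Gamma(v,\Delta v)$, and substituting $\partial_t v = \Delta v + \Gamma(v)/v$. With CDE$(n,0)$ this gives $\LL\Gamma(v) \geq 2(\Delta v)^2/n = 2v^2\phi^2/n$ at $(x^*,t^*)$. The second is the identity
$$\LL(t u \phi) = t\LL\Gamma(v) - u\phi,$$
obtained by expanding $u\phi = -v\Delta v = \Gamma(v) - \Delta u/2$, then using $\LL u = 0$ (so $\LL(t\Delta u) = -\Delta u$) and $\LL(th) = t\LL h - h$ to collect the terms.

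To conclude I would apply Lemma~\ref{lemma:L(gF)} with the non-negative multiplier $g := u$ and the function $F = t\phi$ (which has a local maximum at $(x^*,t^*)$), obtaining $\LL(u\cdot t\phi)(x^*,t^*) \leq (\LL u)(x^*,t^*)\cdot F(x^*,t^*) = 0$. Substituting the two identities above yields $2tv^2\phi^2/n \leq u\phi = v^2\phi$ at $(x^*,t^*)$, and dividing by $v^2\phi > 0$ gives $M = t\phi(x^*,t^*) \leq n/2$; letting $T$ vary finishes the proof. The main hurdle, I expect, is not algebraic but the hunt for the right multiplier: the choice $g = u$ is essentially forced because it simultaneously satisfies $\LL u = 0$ (killing the right-hand side in Lemma~\ref{lemma:L(gF)}) and clears the denominator in $\phi$, bringing the expression into $\Gammatt$-form so that CDE$(n,0)$ can be invoked.
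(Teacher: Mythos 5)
Your proposal is correct and follows essentially the same route as the paper: a maximum-principle argument on $t\phi$ (the paper uses $F=2t\phi$), Lemma~\ref{lemma:L(gF)} with the multiplier $g=u$, the identity $\LL(2\Gamma(\sqrt{u}))=4\Gammatt(\sqrt{u})$ coming from the key formula~\eqref{eq:keyformula}, and then $CDE(n,0)$ to close the quadratic inequality. The only differences are cosmetic (normalization of $F$ and a slightly more explicit packaging of the identity $\LL\Gamma(v)=2\Gammatt(v)$).
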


\begin{proof}
Let 
\begin{equation}
\label{eq:F}
 F =t\left(\frac{2\Gamma(\sqrt{u})}{u} - \frac{2\partial_t(\sqrt{u})}{\sqrt{u}}\right)\ .
 \end{equation}

 Fix an arbitrary $T > 0$.  Our goal is to show that $F(x,T) \leq n$ for every $x \in V$. Let~$(x^*,t^*)$ be a maximum point of $F$ in $V \times [0,T]$. We may assume $F(x^*,t^*) > 0$. Hence $t^* > 0$. Moreover, by identity~\eqref{eq:keyformula}
 which is true both in the continuous and the discrete setting,
and the fact that $\LL u=0$ we know that
 \begin{equation}
 \label{eq:F-equivalence}
 F = t \cdot \frac{-2\Delta \sqrt{u} }{\sqrt{u}}\ . 
 \end{equation}
 
We conclude from~\eqref{eq:F-equivalence} that 
\begin{equation}
\label{ineq:super-harmonic-condition-verified}
(\Delta \sqrt{u}) (x^*,t^*) < 0\ .
\end{equation}

 In what follows all computations are understood to take place at the point $(x^*,t^*)$. We apply Lemma~\ref{lemma:L(gF)} with the choice of $g = u$. This gives
\[ \LL(u)\cdot F \geq \LL(u\cdot F) = \LL(t^*\cdot(2\Gamma(\sqrt{u}) - \Delta u)) = t^*\cdot \LL(2\Gamma(\sqrt{u}) - \Delta u) - (2\Gamma(\sqrt{u}) - \Delta u).\]
We know that $\LL(u) = 0$. Also, since $\Delta$ and $\LL$ commute,  $\LL(\Delta u) = 0$. So we are left with 
\begin{equation}\label{eq:gamma2} \frac{uF}{t^*}= 2\Gamma(\sqrt{u}) - \Delta u \geq t^* \cdot \LL(2\Gamma(\sqrt{u}))= t^* \cdot \left(2\Delta \Gamma(\sqrt{u}) - 4\Gamma(\sqrt{u}, \partial_t \sqrt{u})\right) = 4t^* \cdot \Gammatt(\sqrt{u})\ .\end{equation} 
The last equality is true by~\eqref{eq:Gamma2-equivalence} and since 
\[\partial_t \sqrt{u} = \frac{\partial_t u}{2\sqrt{u}} = \frac{ \Delta (\sqrt{u}^2)}{2\sqrt{u}}.\]
By~\eqref{ineq:super-harmonic-condition-verified}
and the $CDE(n,0)$-inequality applied to $\sqrt{u}(\cdot, t^*)$
 we get
\[ \frac{uF}{t^*} \geq \frac{4t^*}{n} \left(\Delta(\sqrt{u})\right)^2 \stackrel{\eqref{eq:F-equivalence}}{=} \frac{t^*}{n} \left( -\frac{\sqrt{u}F}{t^*}\right)^2 = \frac{u}{nt^*}F^2.\]

Indeed, the preceding line displays the reason why the identity~\eqref{eq:keyformula} is crucial: It allows us to relate $\mathcal{L}(uF)$ to $uF^2$.  

Thus we get $F \leq n$ at $(x^*,t^*)$ as desired.
\end{proof}

We can extend the result to the case of graphs satisfying $CDE(n,-K)$ for some $K > 0$ as follows.

\begin{theorem}\label{thm:compactgrecurv}
Let $G$ be a finite graph satisfying $CDE(n,-K)$ for some $K > 0$ and let $u$ be a positive solution to the heat equation on $G$. Fix $0 < \alpha < 1$. Then for all $t>0$
\[ \frac{(1-\alpha)\Gamma(\sqrt{u})}{u} - \frac{\partial_t(\sqrt{u})}{\sqrt{u}} \leq \frac{n}{(1-\alpha)2t} +  \frac{Kn}{\alpha}.\]
\end{theorem}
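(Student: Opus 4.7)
The plan is to adapt the proof of Theorem~\ref{thm:compactgre}, modifying the functional to absorb the extra negative curvature term. Concretely, for an arbitrary $T > 0$, set
\[
F(x,t) = t\left(\frac{2(1-\alpha)\Gamma(\sqrt u)}{u} - \frac{2\partial_t\sqrt u}{\sqrt u}\right)
\]
and aim to show $F(x,T) \leq \frac{n}{1-\alpha} + \frac{2KnT}{\alpha}$ at every $x$. Pick a maximum point $(x^*,t^*) \in V\times[0,T]$; may assume $F(x^*,t^*) > 0$, hence $t^*>0$. Using $2\sqrt u\, \Delta\sqrt u = \Delta u - 2\Gamma(\sqrt u) = \partial_t u - 2\Gamma(\sqrt u)$ as in~\eqref{eq:F-equivalence}, rewrite
\[
F = t\left(-\frac{2\alpha\Gamma(\sqrt u)}{u} - \frac{2\Delta\sqrt u}{\sqrt u}\right),
\]
so that $F(x^*,t^*)>0$ forces $(\Delta\sqrt u)(x^*,t^*) < 0$, which enables us to invoke $CDE(n,-K)$ at $\sqrt u(\cdot,t^*)$.

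Apply Lemma~\ref{lemma:L(gF)} with $g = u$ (so $\LL g = 0$) and exploit that $\LL(\partial_t u)=0$, $\LL t = -1$, and $\partial_t\sqrt u = \Delta(f^2)/(2f)$ for $f=\sqrt u$. This gives, at $(x^*,t^*)$,
\[
0 \geq \LL(uF) = 4t(1-\alpha)\Gammatt(\sqrt u) - 2(1-\alpha)\Gamma(\sqrt u) + \partial_t u = 4t(1-\alpha)\Gammatt(\sqrt u) - \frac{uF}{t}
\]
exactly as in~\eqref{eq:gamma2}. Now invoking $CDE(n,-K)$ and dividing by $u$, with $A := -\Delta\sqrt u/\sqrt u > 0$ and $B := \Gamma(\sqrt u)/u \geq 0$, we obtain
\[
\frac{4t^{*}(1-\alpha)A^2}{n} - 4Kt^{*}(1-\alpha)B \leq \frac{F}{t^{*}} = 2A - 2\alpha B,
\]
i.e.\
\[
\frac{4t^{*}(1-\alpha)A^2}{n} \leq 2A + 2B\bigl(2Kt^{*}(1-\alpha) - \alpha\bigr).
\]

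The last step is to read off an upper bound on $F = 2t^{*}(A-\alpha B)$ by case analysis on the sign of $2Kt^{*}(1-\alpha) - \alpha$. If it is $\leq 0$, the second term on the right is non-positive, yielding $A \leq n/(2t^{*}(1-\alpha))$ and hence $F \leq 2t^{*}A \leq n/(1-\alpha)$. If it is positive, use $B \leq A/\alpha$ (which follows from $F>0$); substituting gives $A \leq Kn/\alpha$, hence $F \leq 2t^{*}A \leq 2Knt^{*}/\alpha$. In either case $F(x^{*},t^{*}) \leq n/(1-\alpha) + 2Knt^{*}/\alpha \leq n/(1-\alpha) + 2KnT/\alpha$, so $F(x,T) \leq n/(1-\alpha) + 2KnT/\alpha$ for every $x$; dividing by $2T$ gives the claim at $t=T$, and $T>0$ was arbitrary.

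The only genuinely new difficulty compared with Theorem~\ref{thm:compactgre} is that $CDE(n,-K)$ produces a stray $-K\Gamma(\sqrt u)$ term that cannot be absorbed into $(\Delta\sqrt u)^2$ directly; the parameter $\alpha$ is precisely the device that allows us to trade a fraction $\alpha$ of the $\Gamma(\sqrt u)/u$ coefficient in $F$ for control on $B$ via $B \leq A/\alpha$. Getting the two cases to interface cleanly (and verifying that dropping the cross term $2\alpha B \cdot (\cdots)$ in the expansion is harmless) is the main place one must be a little careful, but no new estimate beyond $CDE(n,-K)$ and Lemma~\ref{lemma:L(gF)} is required.
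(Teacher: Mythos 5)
Your proposal is correct and follows essentially the paper's own argument: the same functional $F$, the same application of Lemma~\ref{lemma:L(gF)} with $g=u$ at a space-time maximum, and the same invocation of $CDE(n,-K)$ applied to $\sqrt{u}$ after verifying $\Delta\sqrt{u}(x^*,t^*)<0$. The only divergence is the endgame: you close the resulting inequality by a two-case analysis on the sign of $2Kt^*(1-\alpha)-\alpha$ using $B\le A/\alpha$ (which indeed follows from $F>0$), whereas the paper expands $(F+\alpha G)^2$ with $G=2t^*\Gamma(\sqrt{u})/u$, discards the cross term, and completes the square in $G$; both are valid, and each yields a constant at least as good as the one claimed in the statement.
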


\begin{proof} We proceed similarly to the previous case, so we do not repeat computations that are exactly the same.
Let 
\[ F = t \cdot \frac{2(1-\alpha)\Gamma(\sqrt{u}) - \Delta u}{u} \leq t\cdot \frac{-2\Delta \sqrt{u}}{\sqrt{u}}. \]

Fix an arbitrary $T > 0$, and we will prove the estimate at $(x,T)$ for all $x \in V$.  As before let $(x^*,t^*)$ be the place where $F$ assumes its maximum in the $V \times [0,T]$ domain. We may assume $F(x^*,t^*) > 0$ otherwise there is nothing to prove. Hence $t^* > 0$ and $\Delta \sqrt{u} (x^*,t^*) < 0$.

 In what follows all computations are understood at the point $(x^*,t^*)$. 

We again apply Lemma~\ref{lemma:L(gF)} with the choice of $F = u$. As before, this gives
\[  0 =  \LL(u)\cdot F \geq \LL(u\cdot F) = \LL(t^* \cdot(2(1-\alpha)\Gamma(\sqrt{u}) - \Delta u)) = 4(1-\alpha)t^*\cdot \Gammatt(\sqrt{u}) - \frac{uF}{t^*}.\]
Applying the $CDE(n,-K)$ inequality to $\sqrt{u}$, multiplying by $t^*/u$ and rearranging gives
\[ F  \geq \frac{1-\alpha}{n}(F+\alpha G)^2 - 2(1-\alpha)t^*K G, \] where $G = t^* \cdot 2\Gamma(\sqrt{u})/u$. After expanding $(F+\alpha G)^2$ we throw away the $F\cdot G$ term, and use $\alpha^2 G^2$ to bound the last term on the right hand side. Completing the quadratic and linear term in $G$ to a perfect square yields

\begin{equation}\label{eq:beta} \alpha^2 G^2 - 2t^*Kn G \geq - \left(\frac{t^*Kn}{\alpha}\right)^2  = -(t^*)^2 C(\alpha, n ,K).
\end{equation}
So we have $F^2 \leq nF/(1-\alpha)  + t^2 C$, which implies 
\[ F(x,T) \leq F(x^*,t^*) \leq \frac{n}{1-\alpha}  + t^* \sqrt{C} \leq \frac{n}{1-\alpha} + T  \frac{Kn}{\alpha},\] which proves the gradient estimate at $(x,T)$ for all $x \in V$.  Since $T$ is arbitrary, we have the theorem as claimed.  
\end{proof}

We can also extend the result from solutions to the more general operator $(\mathcal{L}-q) = (\Delta - \partial_t - q)u = 0$, where $q(x,t)$ is a potential satisfying  $\Delta q \leq \vartheta$ and $\Gamma(q) \leq \eta^{2}$ for some $\vartheta\geq 0$ and $\eta \geq 0$. 

\begin{theorem} \label{thm:potential}
Let $G$ be a finite graph and $q(x,t): V \times \mathbb{R}^+ \to \mathbb{R}$ be a potential satisfying $\Delta q \leq \vartheta$ and $\Gamma(q) \leq \eta^{2}$ for all $x \in V$ and $t \geq 0$.  Suppose $u = u(x,t)$ satisfies $(\mathcal{L} - q)u = 0$ on $G$.  
\begin{enumerate}
\item If $G$ satisfies $CDE(n,0)$, then  for all $t>0$
\[ \frac{\Gamma(\sqrt{u})}{u} - \frac{\partial_t(\sqrt{u})}{\sqrt{u}} - \frac{q}{2} < \frac{n}{2t} + \frac{1}{2}\sqrt{n(\vartheta + \eta \sqrt{2D_\mu\left(D_w+1\right)} )}.\] 
\item Fix $0 < \alpha < 1$.  If $G$ satisfies $CDE(n,-K)$, for some $K \geq 0$,
then  for all $t>0$
\[
(1-\alpha) \frac{\Gamma(\sqrt{u})}{u} - \frac{\partial_t(\sqrt{u})}{\sqrt{u}} - \frac{q}{2} < \frac{n}{2(1-\alpha)t} + \frac{1}{2}C(\alpha,  K, n, \vartheta, \eta),
\]
where
\begin{align*}
&C(\alpha,  n, K, \vartheta, \eta)= \\ 
&\sqrt{ \frac{K^2n^2}{\alpha^2} + \frac{n}{1-\alpha}\left(\vartheta + \eta \left[ (1-\alpha)\sqrt{2D_\mu\left(D_w+1\right)} + \alpha \sqrt{2D_\mu\left(D_w^3+1\right)}\right] \right)}
\end{align*}
\end{enumerate}
\end{theorem}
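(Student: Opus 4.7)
The plan is to adapt the maximum-principle argument of Theorem~\ref{thm:compactgrecurv}, modifying the auxiliary function so as to absorb the potential and then carefully tracking the extra terms it generates; I would treat Part~(2) directly, and Part~(1) follows by setting $\alpha = K = 0$. The natural choice for the auxiliary function is
$$F := t\left(\frac{2(1-\alpha)\Gamma(\sqrt u)}{u} - \frac{2\partial_t\sqrt u}{\sqrt u} - q\right).$$
Using $(\mathcal L - q)u = 0$ and identity~\eqref{eq:keyformula}, the $q$-term cancels algebraically so that $uF = t\bigl(2(1-\alpha)\Gamma(\sqrt u) - \Delta u\bigr)$; writing $G := 2t\Gamma(\sqrt u)/u$, this means $F + \alpha G = -2t\Delta\sqrt u/\sqrt u$, and $F > 0$ at a maximum point $(x^*, t^*) \in V \times [0, T]$ thus forces $\Delta\sqrt u < 0$. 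Lemma~\ref{lemma:L(gF)} applied with $g = u$ then gives $\mathcal L(uF) \leq (\mathcal L u) F = quF$ at $(x^*, t^*)$.

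Next I would expand $\mathcal L(uF)$. Using $\mathcal L(\Delta u) = \Delta(\mathcal L u) = \Delta(qu)$ and, via identity~\eqref{eq:Gamma2-equivalence} with $\partial_t\sqrt u = \Delta u/(2\sqrt u) - q\sqrt u/2$, the identity $\mathcal L(2\Gamma(\sqrt u)) = 4\Gammatt(\sqrt u) + 2\Gamma(\sqrt u, q\sqrt u)$, one arrives at
$$uF/t + quF + t\Delta(qu) - 2(1-\alpha)t\Gamma(\sqrt u, q\sqrt u) \geq 4(1-\alpha)t\Gammatt(\sqrt u).$$
The decisive computational step will be to rewrite the bracket as $(1-\alpha)\bigl[\Delta(qu) - 2\Gamma(\sqrt u, q\sqrt u)\bigr] + \alpha\Delta(qu)$, then apply the tautological Leibniz identity $\Delta(qu) = 2\Gamma(q,u) + q\Delta u + u\Delta q$ together with the ``partial chain rule''
$$2\Gamma(\sqrt u, q\sqrt u) = 2q\Gamma(\sqrt u) + 2\sqrt u\,\Gamma(q,\sqrt u) + \asum{y\sim x}(q(y)-q(x))(\sqrt{u(y)} - \sqrt{u(x)})^2,$$
which follows by expanding the product $q(y)\sqrt{u(y)} - q(x)\sqrt{u(x)}$ via the usual $ab - cd$ trick. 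Using $\Delta u - 2\Gamma(\sqrt u) = 2\sqrt u\,\Delta\sqrt u$, the ``non-diffusive'' last sum collapses and the $q$-terms assemble into $2(1-\alpha)t\sqrt u\,\Gamma(q,\sqrt u) + 2\alpha t\Gamma(q,u) + tu\Delta q - quF$; the trailing $-quF$ exactly cancels the $quF$ produced by Lemma~\ref{lemma:L(gF)}.

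After this cancellation, invoking $CDE(n,-K)$ together with $(\Delta\sqrt u)^2 = u(F + \alpha G)^2/(4t^2)$ will yield
$$(1-\alpha)\frac{(F + \alpha G)^2}{n} \leq F + 2(1-\alpha)tKG + P,$$
where $P := t^2\Delta q + 2(1-\alpha)t^2\Gamma(q,\sqrt u)/\sqrt u + 2\alpha t^2\Gamma(q,u)/u$. The hypothesis $\Delta q \leq \vartheta$ and Cauchy--Schwarz $|\Gamma(q,f)| \leq \eta\sqrt{\Gamma(f)}$, combined with the pointwise local bounds $\Gamma(\sqrt u)/u \leq D_\mu(D_w+1)/2$ and $\Gamma(u)/u^2 \leq D_\mu(D_w^3+1)/2$ (both valid at $(x^*, t^*)$ because $\Delta\sqrt u < 0$ there, via Lemma~\ref{prop:easy}~$(ii)$ and $(iii)$ applied to $f = \sqrt u$), will produce exactly the $(1-\alpha)\sqrt{2D_\mu(D_w+1)}$ and $\alpha\sqrt{2D_\mu(D_w^3+1)}$ contributions in the statement. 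Dropping $2\alpha FG \geq 0$ from the expansion of $(F + \alpha G)^2$ and completing the square in $G$ against the $2(1-\alpha)tKG$ term then reduces everything to a quadratic inequality in $F$ whose solution, via $\sqrt{a+b} \leq \sqrt a + \sqrt b$, is the claimed bound.

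The main obstacle will be the failure of the chain rule: the non-diffusive sum $\asum{y}(q(y)-q(x))(\sqrt{u(y)}-\sqrt{u(x)})^2$ in the partial chain rule has no manifold counterpart, and a direct estimate on it would produce constants incompatible with the statement. The splitting $(1-\alpha)[\cdots] + \alpha[\cdots]$ of $\Delta(qu) - 2(1-\alpha)\Gamma(\sqrt u, q\sqrt u)$, rather than bounding that sum head-on, is what reroutes the trouble through the cleaner quantity $2\alpha\Gamma(q,u)$, which admits a clean Cauchy--Schwarz estimate; this is precisely the step that forces the fourth-moment bound of Lemma~\ref{prop:easy}~$(iii)$ (and hence the $D_w^3$ factor) into the constant.
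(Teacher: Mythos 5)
Your proposal is correct and follows essentially the same route as the paper's proof: the same auxiliary function $F$, the same application of Lemma~\ref{lemma:L(gF)} with $g=u$, the same $(1-\alpha)/\alpha$ splitting of the potential terms (the paper packages your ``partial chain rule'' plus Leibniz computation as the single identity~\eqref{eq:pot3}, but the algebra is identical), the same Cauchy--Schwarz estimates via Lemma~\ref{prop:easy}~$(ii)$ and $(iii)$, and the same completion of the square in $G$. The only cosmetic difference is that you deduce Part~(1) as the $\alpha=K=0$ case of Part~(2), whereas the paper proves Part~(1) first and then generalizes.
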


\begin{proof}
Again, the proof is quite similar to the proof of Theorem \ref{thm:compactgre} so we do not repeat computations that are exactly the same.  
Let 
\[
F = t \cdot \left( \frac{2 \Gamma(\sqrt{u}) - u_t}{u} - q \right)
\]
As $(\Delta - \partial_t - q)u = 0$, note $u_t = \Delta u - qu$, so we may rewrite $F$ as
\[
F = t \cdot  \frac{2 \Gamma(\sqrt{u}) - \Delta u}{u} = -t \cdot \frac{2 \Delta \sqrt{u}}{\sqrt{u}} 
\]
as before.  

Again, we fix an arbitrary $T$ and take $(x^*, t^*)$ to be the place where $F$ assumes its maximum in the $V \times [0,T]$ domain, and we
may assume that $F(x^*,t^*) > 0$ and hence $t^*>0$ and $\Delta \sqrt{u}(x^*,t^*) < 0$.  All computations below should be understood
at the point $(x^*,t^*)$.  

We again apply Lemma \ref{lemma:L(gF)} with the choice that $F=u$.  The primary difference before is that in the application of Lemma \ref{lemma:L(gF)} is that at the maximum
\[
\mathcal{L}(uF) \leq \mathcal{L}(u)F = qu = -2t^*q\sqrt{u}\Delta \sqrt{u}.   
\]
Then, similarly as before,
\begin{align}
-2t^*q\sqrt{u}\Delta\sqrt{u} \geq \LL(uF) &= -\frac{uF}{t^*} + t^*\left( 2 \left[ \Delta( \Gamma(\sqrt{u})) - 2 \Gamma(\sqrt{u},\frac{u_t}{2\sqrt{u}}) \right] - \Delta \LL(u) \right)\nonumber \\
& = -\frac{uF}{t^*} + t^*\left( 4 \Gammatt(\sqrt{u}) + 2 \Gamma(\sqrt{u},q\sqrt{u}) - \Delta (qu)  \right) \label{eq:pot1}
\end{align}
Rearranging (\ref{eq:pot1}),
\begin{align}
0 \geq -\frac{uF}{t^*} + t^*\left( 4\Gammatt(\sqrt{u}) + 2 \Gamma(\sqrt{u},q\sqrt{u}) +2q\sqrt{u}\Delta\sqrt{u} - \Delta(qu) \right) \label{eq:pot2}
\end{align}
Note
\begin{align}
\Delta(qu) &= q\sqrt{u} \Delta \sqrt{u} + \sqrt{u}\Delta(q\sqrt{u}) + 2\Gamma(\sqrt{u},q\sqrt{u}) \nonumber \\&= 2q\sqrt{u}\Delta\sqrt{u} + u\Delta q + 2\sqrt{u}\Gamma(\sqrt{u},q) + 2\Gamma(\sqrt{u},q\sqrt{u}) 
\label{eq:pot3}
\end{align}
Combining $(\ref{eq:pot2})$ and $(\ref{eq:pot3})$, we obtain
\begin{align}
0 \geq -\frac{uF}{t^*} + t^*\left( 4 \Gammatt(\sqrt{u}) - u \Delta q - 2\sqrt{u}\Gamma(\sqrt{u},q)  \right) \label{eq:pot5}
\end{align}
Finally, we bound 
\[
2\Gamma(\sqrt{u},q) \leq \sqrt{2\Gamma(\sqrt{u})2\Gamma(q) }< \eta \sqrt{2 D_\mu \left( D_w+ 1\right) u}. 
\]
Here the first inequality follows from an application of Cauchy-Schwarz.  The bound on $\Gamma(\sqrt{u})(x^*,t^*)$ follows as $\Delta \sqrt{u}(x^*,t^*) < 0$, and applying Lemma 
\ref{prop:easy} (ii) yields
\begin{align*}
2\Gamma(\sqrt{u})(x^*,t^*) &= \asum{y \sim x^*} (\sqrt{u}(y,t^*)-\sqrt{u}(x^*,t^*))^2 \leq \asum{y \sim x^*} [u(y,t^*))+u(x^*,t^*)] \\&<  D_\mu \left(D_w+ 1\right)u(x^*,t^*).  
\end{align*}
With this,  $(\ref{eq:pot5})$ gives
\[
0 > -\frac{uF}{t^*} + t^*\left(4 \Gammatt(\sqrt{u}) - u \vartheta - \eta u \sqrt{2D_{\mu} \left(D_w + 1\right)} \right).
\]
Applying the $CDE(n,0)$ inequality, multiplying by $nt^*/u$ and rearranging yields 
\[
F^2 < nF - (t^*)^2n\left(\vartheta + \eta \sqrt{2D_\mu\left(D_w+1\right)}\right)
\]
which yields the first claim of the theorem, as above.  

The general case with negative curvature works by combining the above with the method of Theorem \ref{thm:compactgrecurv}.  

In the general case, 
\[
F = t\left(\frac{2(1-\alpha) \Gamma(\sqrt{u}) - u_t}{u} - q\right) = t \left( \frac{-2(1-\alpha)\sqrt{u}\Delta\sqrt{u} - \alpha \Delta u}{u} \right).
\]
Following the previous computation, again at $(x^*,t^*)$ maximizing $F$,
\begin{align*}
-2(1-\alpha)t^*q\sqrt{u}\Delta\sqrt{u} &- \alpha q\Delta u \geq \LL(uF)\nonumber\\&= -\frac{uF}{t^*} + t^*\left( 2(1-\alpha) \left[ \Delta( \Gamma(\sqrt{u})) - 2 \Gamma(\sqrt{u},\frac{u_t}{2\sqrt{u}}) \right] - \Delta \LL(u) \right)\nonumber \\
& = -\frac{uF}{t^*} + t^*\left( 4(1-\alpha) \Gammatt(\sqrt{u}) + 2(1-\alpha)\Gamma(\sqrt{u},q\sqrt{u}) - \Delta(qu)  \right)  
\end{align*}
After some computation and rearrangement, we get that 
\begin{align*}
0 &> -\frac{uF}{t^*} + t^*\left( 4(1-\alpha) \Gammatt(\sqrt{u}) - (1-\alpha)u\left(\vartheta + \eta\sqrt{2D_\mu \left(D_w+1\right)}\right) + \alpha(q\Delta u - \Delta(qu)) \right)\\
&= -\frac{uF}{t^*} + t^*\left( 4(1-\alpha) \Gammatt(\sqrt{u}) - (1-\alpha)u\left(\vartheta + \eta\sqrt{2D_\mu\left(D_w+1\right)}\right) - \alpha \left(u \Delta q + 2\Gamma(u, q)\right)\right). 
 \end{align*}
By Lemma \ref{prop:easy} $(iii)$, and applying Cauchy-Schwarz we bound
\[
2\Gamma(u,q) \leq \sqrt{2\Gamma(u)2\Gamma(q)} < \eta u \sqrt{2D_\mu \left(D_w^3+1\right)},
\]
establishing that
\[
\left(u \Delta q + 2\Gamma(u, q)\right) < u \left( \vartheta +  \eta\sqrt{2D_\mu\left(D^3_w+1\right)} \right).
\]
Following the computations of the proof of Theorem \ref{thm:compactgrecurv} from $(\ref{eq:beta})$ we get
$F^{2} \leq n/(1-\alpha) F + t^{2}C^2(\alpha, n, K, \vartheta, \eta)$,
where 
\begin{align*}&C(\alpha,  n, K, \vartheta, \eta)= \\ 
&\sqrt{ \frac{K^2n^2}{\alpha^2} + \frac{n}{1-\alpha}\left(\vartheta + \eta \left[ (1-\alpha)\sqrt{2D_\mu\left(D_w+1\right)} + \alpha \sqrt{2D_\mu\left(D^3_w+1\right)}\right] \right)}
\end{align*}
Again, we prove the result for all $(x,T)$ but, as $T$ is arbitrary, this completes
the proof of the theorem.
\end{proof}
\subsection{General estimates in a ball}

We can prove somewhat weaker results in the presence of a boundary. We do not assume finiteness of the graph anymore, and we only assume the heat equation is satisfied in a finite ball. Our estimates will depend on the radius of this ball.

We shall prove two types of estimates. In this section we prove the first type that works for any non-negatively curved graph, while the second type requires the existence of so-called strong cut-off function on the graph that we will discuss later in Section \ref{strongcutoff}.

\begin{theorem}\label{thm:weakballgre} Let $G(V,E)$ be a (finite or infinite) graph and  $R > 0$, and fix $x_0 \in V$. 
\begin{enumerate}
\item Let $u: V \times \R \to \R$ a positive function such that $\LL u(x,t) = 0$ if  $d(x,x_0) \leq 2R$.  If $G$ satisfies $CDE(n,0)$ then for all $t>0$
\[ \frac{\Gamma(\sqrt{u})}{u} -\frac{ \partial_t \sqrt{u}}{\sqrt{u}}< \frac{n}{2t} + \frac{n(1+D_w)D_\mu}{R}\] in the ball of radius $R$ around $x_0$.
\item Let $u: V \times \R \to \R$ a positive function such that $(\LL-q) u(x,t) = 0$ if  $d(x,x_0) \leq 2R$, for some function $q(x,t)$ so that $\Delta q \leq \vartheta$ and $\Gamma(q) \leq \eta^2$.  
If $G$ satisfies $CDE(n,-K)$ for some $K > 0$, then for any $0 < \alpha < 1$ and all $t>0$
\[ \frac{(1-\alpha)\Gamma(\sqrt{u})}{u} -\frac{ \partial_t \sqrt{u}}{\sqrt{u}} -\frac{q}{2}< \frac{n}{(1-\alpha)2t} +  \frac{n(2+D_w)D_\mu}{(1-\alpha)R} + \frac{1}{2}C(\alpha,n,K,\vartheta,\eta),\]
where 
\begin{align*}&C(\alpha,  n, K, \vartheta, \eta)= \\ 
&\sqrt{ \frac{K^2n^2}{\alpha^2} + \frac{n}{1-\alpha}\left(\vartheta + \eta \left[ (1-\alpha)\sqrt{2D_\mu\left(D_w+1\right)} + \alpha \sqrt{2D_\mu\left(D^3_w+1\right)}\right] \right)}
\end{align*}
in the ball of radius $R$ around $x_0$.
\end{enumerate}
\end{theorem}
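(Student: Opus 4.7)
The plan is to localize the arguments of Theorems~\ref{thm:compactgre}, \ref{thm:compactgrecurv} and \ref{thm:potential} by multiplying $F$ by a spatial cutoff that confines the maximum point to $B(x_0,2R)$. I describe part~(1) in detail; part~(2) follows by combining the cutoff with the negative-curvature and potential-term arguments of Theorems~\ref{thm:compactgrecurv} and~\ref{thm:potential}.

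First I would pick a cutoff $\phi:V\to[0,1]$ supported in $B(x_0,2R)$, equal to~$1$ on $B(x_0,R)$ and with $|\phi(x)-\phi(y)|\leq 1/R$ whenever $x\sim y$, so that $|\Delta\phi|\leq D_\mu/R$. Taking $F=t(2\Gamma(\sqrt{u})-\Delta u)/u$ as in Theorem~\ref{thm:compactgre}, let $(x^\ast,t^\ast)$ be a maximum of $H=\phi F$ on $V\times[0,T]$; we may assume $H(x^\ast,t^\ast)>0$, so $x^\ast\in B(x_0,2R)$, $\phi(x^\ast)>0$, $t^\ast>0$, $\Delta\sqrt{u}(x^\ast,t^\ast)<0$, and $\LL u(x^\ast)=0$. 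Applying Lemma~\ref{lemma:L(gF)} with $g=u$ and lemma-$F=H$ gives $\LL(\phi\cdot uF)(x^\ast,t^\ast)\leq 0$; expanding by the product rule this becomes
\[
\phi\,\LL(uF)+uF\,\Delta\phi+2\Gamma(\phi,uF)\leq 0.
\]
The compact-case computation yields $\LL(uF)=4t\Gammatt(\sqrt{u})-uF/t$, and combining with $CDE(n,0)$ and the identity $(\Delta\sqrt{u})^2=uF^2/(4t^2)$ gives $\phi\,\LL(uF)\geq\phi uF^2/(nt)-\phi uF/t$. Writing $M:=\phi(x^\ast)F(x^\ast,t^\ast)$, a routine rearrangement reduces the inequality to $M^2/n\leq M+(\text{cutoff error})$.

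The main obstacle is the estimate of the cutoff error. Using the telescoping identity
\[
uF\,\Delta\phi+2\Gamma(\phi,uF)(x^\ast)=\asum{y\sim x^\ast}\bigl(\phi(y)-\phi(x^\ast)\bigr)u(y)F(y),
\]
the maximum property $\phi(y)F(y)\leq M$ combined with $|\phi(y)-\phi(x^\ast)|\leq 1/R$ controls the contributions with $F(y)\geq 0$. The terms with $F(y)<0$ require more care---a standard device is to replace $\phi$ by a quadratic variant so that $|\phi(y)-\phi(x^\ast)|$ vanishes more rapidly near the boundary---and to exploit Lemma~\ref{prop:easy} applied to $\sqrt{u}$ (which is valid because $\Delta\sqrt{u}(x^\ast)<0$), yielding $\asum{y\sim x^\ast}u(y)<D_\mu D_w u(x^\ast)$ and $\asum{y\sim x^\ast}u(y)^2<D_\mu D_w^3 u(x^\ast)^2$. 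After these estimates, the total error is bounded by $(1+D_w)D_\mu\cdot t^\ast M/R$, giving $M^2/n\leq M\bigl(1+(1+D_w)D_\mu t^\ast/R\bigr)$ and hence $M\leq n\bigl(1+(1+D_w)D_\mu T/R\bigr)$. Since $\phi\equiv 1$ on $B(x_0,R)$ this produces $F(x,T)\leq M$ there, and dividing by $2T$ yields the stated estimate.

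For part~(2), I would repeat the same procedure with $F=t\bigl((2(1-\alpha)\Gamma(\sqrt{u})-u_t)/u-q\bigr)$ from Theorems~\ref{thm:compactgrecurv} and~\ref{thm:potential}. The $K$-, $\vartheta$- and $\eta$-dependent terms enter at exactly the same places as in those theorems, and are handled by the completion-of-squares trick~\eqref{eq:beta} together with the Cauchy--Schwarz bounds on $\Gamma(\sqrt{u},q)$ and $\Gamma(u,q)$; the cutoff errors are controlled exactly as in part~(1) and contribute the $(2+D_w)D_\mu/((1-\alpha)R)$ term in the statement, the slight change in coefficient arising from the extra $\alpha\Delta u$ term in the definition of $F$. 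No new ideas beyond combining these ingredients are needed.
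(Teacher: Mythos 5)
Your overall architecture (spatial cutoff, maximum principle, then the $CDE$ inequality exactly as in the compact case) matches the paper's, but the localization step is carried out differently, and that is where your argument has a genuine gap. You maximize $H=\phi F$ and expand $\LL(\phi\, uF)=\phi\,\LL(uF)+\asum{y\sim x^*}(\phi(y)-\phi(x^*))u(y)F(y)$, so your error term involves the values $F(y)$ at the \emph{neighbors} of $x^*$. The only control you have on these is $\phi(y)F(y)\leq M$, which degenerates precisely where it is needed: if $\phi(y)\ll\phi(x^*)$ (or $\phi(y)=0$) and $F(y)>0$, then $F(y)\leq M/\phi(y)$ can be as large as $RM$, and the factor $|\phi(y)-\phi(x^*)|\leq 1/R$ is completely eaten, leaving an error of order $D_\mu D_w\, u(x^*)M$ with no gain of $1/R$. (Note also that the problematic terms are not the ones with $F(y)<0$ that you flag, but rather those with $F(y)>0$ and $\phi(y)$ small, and separately those with $\phi(y)>\phi(x^*)$ and $F(y)$ very negative, where $u(y)F(y)=-2t\sqrt{u(y)}\,\Delta\sqrt{u}(y)$ depends on second neighbors of $x^*$ and is not controlled by Lemma~\ref{prop:easy}, since that lemma needs $\Delta\sqrt{u}<0$ at the point in question, which you only know at $x^*$.) Your claimed total error bound $(1+D_w)D_\mu t^*M/R$ is therefore not substantiated, and the proposed fix (a quadratic cutoff) does not repair it: with $\phi=\psi^2$ and $\psi$ linear one still gets $(\phi(x^*)-\phi(y))F(y)\lesssim M$ near the boundary of the support. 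This is not a technicality; it is exactly the point where the continuum argument (where $\nabla(\phi F)(x^*)=0$ lets one evaluate everything at $x^*$) fails on graphs.

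The paper's device, which your proposal is missing, is to apply Lemma~\ref{lemma:L(gF)} with $g=u/\phi$ and with the \emph{localized} quantity $F=t\phi\,(2\Gamma(\sqrt u)-\Delta u)/u$ as the lemma's ``$F$'', so that the product $gF=t(2\Gamma(\sqrt u)-\Delta u)$ contains no cutoff at all. The lemma then yields $\LL(gF)(x^*)\leq \LL(u/\phi)(x^*)\cdot F(x^*)$, in which only $F$ \emph{at the maximum point} appears; the cutoff error is $F(x^*)\asum{y\sim x^*}\bigl(\tfrac{1}{\phi(y)}-\tfrac{1}{\phi(x^*)}\bigr)u(y)$, and after the separate treatment of the boundary layer $\phi(x^*)=1/R$ one has $\phi(x^*)^2\bigl|\tfrac{1}{\phi(y)}-\tfrac{1}{\phi(x^*)}\bigr|\leq 2/R$, which together with Lemma~\ref{prop:easy}(ii) produces the $1/R$ error honestly. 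To complete your proof you would need either to adopt this choice of $g$ or to supply some other mechanism that eliminates the neighboring values $F(y)$ from the error term; as written, the step ``after these estimates, the total error is bounded by $(1+D_w)D_\mu t^*M/R$'' does not go through.
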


\begin{proof} First we consider the non-negative curvature case.
Let us define a cut-off function $\phi : V \to \R$ as
\[ \phi(v) = \left\{
\begin{array}{ccl}
0 &:& d(v,x_0) > 2R\\
\frac{2R-d(v,x_0)}{R}&:& 2R \geq d(v,x_0) \geq R\\
1 &:&R > d(v,x_0) 
\end{array}
\right.
\]
We are going to use the maximum-principle as in the proof of Theorem~\ref{thm:compactgre}. Let 
\[ F = t \phi\cdot \frac{2\Gamma(\sqrt{u})-\Delta u}{u}  = t\phi \cdot \frac{-2\Delta \sqrt{u}}{\sqrt{u}},\] and let $(x^*,t^*)$ be the place where $F$ attains its maximum in $V\times[0,T]$ for some arbitrary but fixed $T$. Our goal is to prove a bound on $F(x,T)$ for all $x \in V$ and as $T$ is arbitrary this completes the proof. This bound is positive, so we may assume that $F(x^*,t^*) > 0$. In particular this implies that $t^* >0$, $\phi(x^*) >0$, and $\Delta \sqrt{u}(x^*,t^*) < 0$. 


Let us first assume that $\phi(x^*) = 1/R$. Since positivity of $u$ implies that for any vertex $x$ 
\[ \frac{-\Delta \sqrt{u}}{ \sqrt{u}}(x) = \asum{y\sim x} \left( 1- \frac{\sqrt{u}(y)}{\sqrt{u}(x)} \right) \leq \frac{\deg(x^*)}{\mu(x^*)} \leq D_\mu,\]  we see that in this case $F(x^*,t^*) \leq 2t^*D_\mu/R$ and thus 
\[
F(x,T) \leq F(x^*,t^*) \leq 2t^*D_\mu/R \leq \frac{2TD_\mu}{R}.
\]
For $x \in B(x_0,R)$, $\phi \equiv 1$, so 
\[
F(x,T) = T \cdot \frac{\Gamma(\sqrt{u}) - \Delta u}{u}(x,T) \leq \frac{2TD_\mu}{R},
\]
and dividing by $T$ yields a stronger result than desired. We may therefore assume that $\phi(x^*) \geq \frac{2}{R}$ and $\phi$ does not vanish in the neighborhood of $x^*$.

Now we apply Lemma~\ref{lemma:L(gF)} with the choice of $F  = u/\phi$. Thus we get
\[\LL\left(\frac{u}{\phi}\right)F \geq \LL\left(\frac{u}{\phi}F\right) = -\frac{uF}{t^* \phi} + t^*\cdot \LL(2\Gamma(\sqrt{u})-\Delta u).
\]
Using the fact the $\LL(u) = 0$ we can write 
\[\LL\left(\frac{u}{\phi}\right) = \asum{y\sim x^*}\left( \frac{1}{\phi(y)} - \frac{1}{\phi(x^*)}\right)u(y).\] 
Using the same computation as in (\ref{eq:gamma2}) we get 
\[ t^*\cdot \LL(2\Gamma(\sqrt{u}) -\Delta u) = 4t^*\Gammatt(\sqrt{u}) \geq \frac{t^*}{n} (-2\Delta \sqrt{u})^2 = \frac{t^*}{n}\left( \frac{\sqrt{u}F}{t^*\phi}\right)^2.\]
Putting these together and multiplying through by $t^* \phi^2 /u$ we get 
\[ \phi(x^*)^2 t^* F\cdot\asum{y \sim x^*}\left( \frac{1}{\phi(y)} - \frac{1}{\phi(x^*)}\right)\frac{u(y)}{u(x^*)} + \phi F \geq \frac{1}{n}F^2.
\]
Let us write $\phi(x^*) = s/R$. Then for any $y \sim x^*$ we have $\phi(y) = (s \pm 1)/R$ or $\phi(y) = s/R$. In any case 
\[ \left| \frac{1}{\phi(y)} - \frac{1}{\phi(x^*)}\right| \leq \frac{R}{s(s-1)}. \] Using Lemma \ref{prop:easy} (ii) we have 
\begin{align*} \phi(x^*)^2 t^* F\cdot\asum{y \sim x^*}\left( \frac{1}{\phi(y)} - \frac{1}{\phi(x^*)}\right)\frac{u(y)}{u(x^*)} & \leq  \phi(x^*)^2 t^* F\cdot\asum{y \sim x^*}\left| \frac{1}{\phi(y)} - \frac{1}{\phi(x^*)}\right|\frac{u(y)}{u(x^*)} \\& \leq  \frac{2t^* F}{R}\cdot\asum{y \sim x^*} \frac{u(y)}{u(x^*)}   \\&< \frac{2t^*D_\mu D_w}{R} F.\end{align*}
Combining everything we can see that for any $x$ such that $d(x,x_0) \leq R$ and thus $\phi(x) =1$, at time $T$ 
\[ T \cdot \frac{2\Gamma(\sqrt{u})-\Delta u}{u}  = F(x,T) \leq F(x^*,t^*)< n\cdot \phi + \frac{2nt^*\deg^2(x^*)}{R\mu(x) w_{\mathrm{min}}} \leq n + \frac{2nTD_w D_\mu}{R},\] and dividing by $T$ gives the result.

The proof of the general case is simply the combination of the preceding proof with that of Theorem~\ref{thm:potential}.
\end{proof}

\begin{corollary} If $G(V,E)$ is an infinite, bounded degree graph satisfying $CDE(n,0)$ and $u$ is a positive solution to the heat equation on $G$, then 
\[ 
 \frac{\Gamma(\sqrt{u})}{u} -\frac{ \partial_t \sqrt{u}}{\sqrt{u}} \leq \frac{n}{2t}
\] on the whole graph.
\end{corollary}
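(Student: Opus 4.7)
The plan is to deduce the global estimate directly from Theorem~\ref{thm:weakballgre}(1) by sending the radius of the ball to infinity. The key observation is that the error term in the ball estimate has the form $n(1+D_w)D_\mu/R$, which depends on the graph only through the constants $D_w$ and $D_\mu$ and not on the solution $u$ or the basepoint $x_0$.

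Fix an arbitrary vertex $x \in V$ and an arbitrary time $t > 0$. Since $u$ is a positive solution to the heat equation on the entire graph $G$, for every $R > 0$ the function $u$ satisfies $\LL u = 0$ in particular on the ball of radius $2R$ centered at $x_0 := x$. The hypotheses of Theorem~\ref{thm:weakballgre}(1) are therefore fulfilled, and applying that theorem (evaluated at the center $x$) yields
\[
\frac{\Gamma(\sqrt{u})}{u}(x,t) - \frac{\partial_t\sqrt{u}}{\sqrt{u}}(x,t) \;<\; \frac{n}{2t} + \frac{n(1+D_w)D_\mu}{R}.
\]

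Now one invokes the bounded-degree hypothesis. Since $\deg(x)$ is uniformly bounded on $V$ and the edge weights are bounded below by $w_{\min} > 0$, the quantity $D_w = \max_{x\sim y} \deg(x)/w_{xy}$ is finite. The finiteness of $D_\mu = \max_x \deg(x)/\mu(x)$ follows from the same bound on $\deg(x)$ together with the standing assumption that $\mu$ is a (positive) measure bounded below on each vertex in the cases of interest (for instance, $\mu \equiv 1$ or $\mu = \deg$). Hence $n(1+D_w)D_\mu$ is a finite constant independent of $R$, and letting $R \to \infty$ gives
\[
\frac{\Gamma(\sqrt{u})}{u}(x,t) - \frac{\partial_t\sqrt{u}}{\sqrt{u}}(x,t) \;\leq\; \frac{n}{2t}.
\]
Since $x$ and $t$ were arbitrary, this is the desired global estimate. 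There is no serious obstacle here: the only point worth checking is that the ball estimate has an error term which is $O(1/R)$ with an implicit constant depending only on global structural constants of $G$, and this is exactly what Theorem~\ref{thm:weakballgre}(1) provides.
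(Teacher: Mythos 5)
Your argument is correct and is exactly the intended derivation: the paper states this corollary as an immediate consequence of Theorem~\ref{thm:weakballgre}(1), obtained by applying the ball estimate centered at an arbitrary vertex and letting $R\to\infty$, with bounded degree (together with the standing assumptions $w_{\min}>0$ and $D_\mu<\infty$) guaranteeing that the $O(1/R)$ error term vanishes. Nothing further is needed.
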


\subsection{Strong cut-off functions}\label{strongcutoff}

In the case of manifolds~\cite{LiYau}, a result similar to Theorem~\ref{thm:weakballgre} holds with $1/R^2$ instead of $1/R$.  In one of the key steps of the argument the Laplacian comparison theorem is applied to the distance function. This together with the chain rule implies that one can find a cut-off function $\phi$ that satisfies 
\[\Delta \phi \geq -c(n) \frac{1 +R\sqrt{K}}{R^2},\] where $c$ is a constant that only depends on the dimension. Since the cut-off function $\phi$ also satisfies 
\begin{equation}\label{eq:strong1}\frac{|\nabla\phi|^2}{\phi}<\frac{c(n)}{R^2}\end{equation} it follows, that there exists a constant $C(n)$, that only depends on the dimension such that 
\begin{equation}\label{eq:strong2}\Delta \phi - 2\frac{|\nabla \phi|^2}{\phi} \geq -C(n)\frac{1+R\sqrt{K}}{R^2}\end{equation}

Unfortunately on graphs the Laplacian comparison theorem for the usual  graph distance is not true - think for instance of the lattice $\mathbb{Z}^2$. This is the reason why in general we have to assume the existence of a cut-off function that has similar properties to \eqref{eq:strong1} and \eqref{eq:strong2}, in order to prove a gradient estimate with $1/R^2$. Noting that for a diffusion semigroup  and hence in particular for the Laplace-Beltrami operator on manifolds
\[\phi^2\Delta\frac{1}{\phi} = -\Delta \phi + 2 \frac{\Gamma(\phi)}{\phi}\leq C(n)\frac{1+R\sqrt{K}}{R^2}\] and
\[\phi^3\Gamma\left(\frac{1}{\phi}\right)= \frac{\Gamma(\phi)}{\phi}\leq \frac{C(n)}{R^2}\]
this discussion motivates the following definition:

\begin{definition}\label{def:strong_cutoff}
 Let $G(V,E)$ be a graph satisfying $CDE(n,-K)$ for some $K \geq 0$. We say that the function $\phi: V\to [0,1]$ is an \textit{$(c,R)$-strong} cut-off function centered at $x_0 \in V$ and supported on a set $S \subset V$ if $\phi(x_0 ) =1$, $\phi(x) = 0 $ if $x \not \in S$ and for any vertex $x \in S$ 
\begin{enumerate}
\item either $\phi(x) <\frac{c(1 + R\sqrt{K})}{2R^2}$,
\item or $\phi$ does not vanish in the immediate neighborhood of $v$ and 
\[\phi^2(x) \Delta \frac{1}{\phi}(x) < D_\mu \frac{c(1 + R\sqrt{K})}{R^2} \;\mbox{ and }\; \phi^3(x) \Gamma\left(\frac{1}{\phi}\right)(x) < D_\mu \frac{c}{R^2},\]
where the constant $c = c(n)$ only depends on the dimension $n$.  
\end{enumerate}
\end{definition}
\begin{remark}
The `strength' of the strong cutoff function depends on the size of support $S$.  In
order to get results akin to those in the manifold case, with $\frac{1}{R^2}$ appearing
for solutions valid in $B(x_0, cR)$ one requires a strong cutoff function whose
support lies within a ball of radius $cR$.  The cutoff function defined above, using graph distance, gives a strong cutoff function on the ball of radius $R^2$.  Theorem 
\ref{thm:strongballgre} yields a better estimate than Theorem \ref{thm:weakballgre} whenever one can find a strong cutoff function with support in a ball of radius $\ll R^2$.  

In Section~\ref{sec:examples} we will show (see Corollary~\ref{corol:zd} and Proposition~\ref{prop:zdstrong}) that the usual Cayley graph of $\Z^d$ with the regular or the normalized Laplacian satisfies $CDE(2d,0)$ and admits a $(100,R)$-strong cut-off function supported on a ball of radius $\sqrt{d}R$ centered at $x_0$.
\end{remark}

\begin{theorem} \label{thm:strongballgre} Let $G(V,E)$ be a (finite or infinite) graph satisfying $CDE(n,-K)$ for some $K \geq 0$.  Let $R > 0$ and fix $x_0 \in V$. Assume that $G$ has a $(c,R)$-strong cut-off function supported on $S \subset V$ and centered at $x_0$. Fix $0<\alpha <  1$.  Let $u: V \times \R \to \R$ a positive function such that $(\LL-q)u(x,t) = 0$ if  $x \in S$, for some $q(x,t)$ satisfying $\Delta q \leq \vartheta$ and $\Gamma(q) \leq \eta^2.$
Then for every $\epsilon \in (0,1)$
\begin{align*} &\left(\frac{(1-\alpha)\Gamma(\sqrt{u})}{u} -\frac{ \partial_t \sqrt{u}}{\sqrt{u}} -\frac{q}{2}\right)(x_0,t)  \\ <&  \frac{n}{2(1-\alpha)t} + \frac{D_\mu cn}{2(1-\alpha)R^2}\left(1 +R\sqrt{K}+\frac{n (D_w+1)^2}{4\alpha (1-\alpha)}\right) +\frac{1}{2} C(\alpha, n, K, \vartheta, \eta,\epsilon),
\end{align*} 
where 
\[C(\alpha,  n, K, \vartheta, \eta,\epsilon)= \sqrt{\frac{n}{1-\alpha}\vartheta +   \frac{K^2n^2}{(1-\epsilon)\alpha^2} + \left(\frac{n(1 +\alpha D_w)\eta}{(1-\alpha)\alpha^{1/2}\epsilon^{1/4}} \right)^{\frac{4}{3}}}. \] \end{theorem}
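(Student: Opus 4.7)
The proof will follow the template of Theorem~\ref{thm:compactgrecurv} combined with the potential machinery of Theorem~\ref{thm:potential} and the cut-off argument of Theorem~\ref{thm:weakballgre}, but with the \emph{strong} cut-off function replacing the Lipschitz cut-off to upgrade $1/R$ error terms to $1/R^2$ error terms. I would define
\[
F = t\phi\cdot\left(\frac{2(1-\alpha)\Gamma(\sqrt u)-u_t}{u}-q\right) = t\phi\cdot\frac{-2(1-\alpha)\sqrt u\,\Delta\sqrt u-\alpha\,\Delta u}{u},
\]
fix an arbitrary $T>0$, and let $(x^*,t^*)$ maximize $F$ on $V\times[0,T]$. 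We may assume $F(x^*,t^*)>0$, which forces $t^*>0$, $\phi(x^*)>0$, and $(\Delta\sqrt u)(x^*,t^*)<0$. All computations will take place at $(x^*,t^*)$. Following Definition~\ref{def:strong_cutoff}, the argument splits into two cases. If $\phi(x^*)<c(1+R\sqrt K)/(2R^2)$, then Lemma~\ref{prop:easy} yields $-\Delta\sqrt u/\sqrt u \le D_\mu$ and an analogous bound on $\Gamma(\sqrt u)/u$, so $F(x^*,t^*)$ is already bounded directly by a quantity of the order $D_\mu T(1+R\sqrt K)/R^2$, which is absorbed into the RHS of the theorem.

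In the main case, $\phi$ is nonvanishing in a neighborhood of $x^*$, and I would apply Lemma~\ref{lemma:L(gF)} with $g = u/\phi$. Using $\mathcal{L}u = qu$ together with the graph product rule $\Delta(u\cdot\phi^{-1}) = u\,\Delta(\phi^{-1}) + \phi^{-1}\,\Delta u + 2\Gamma(u,\phi^{-1})$, one computes
\[
\mathcal{L}(u/\phi) = u\,\Delta(1/\phi) + 2\Gamma(u,1/\phi) + qu/\phi,
\]
so that $\mathcal{L}(u/\phi)\cdot F \ge \mathcal{L}\bigl((u/\phi)F\bigr) = -\frac{uF}{t^*\phi} + t^*\,\mathcal{L}\bigl(2(1-\alpha)\Gamma(\sqrt u) - \Delta u - qu\bigr)$. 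The inner $\mathcal{L}$-term is expanded exactly as in the proof of Theorem~\ref{thm:potential} to produce $4(1-\alpha)t^*\Gammatt(\sqrt u)$ together with the tails $u\Delta q$ and $2\sqrt u\,\Gamma(\sqrt u,q)$ from the $(1-\alpha)$-side and $u\Delta q+2\Gamma(u,q)$ from the $\alpha$-side. Applying $CDE(n,-K)$ to $\sqrt u$, multiplying through by $t^*\phi/u$, and invoking the strong cut-off bounds $\phi^2\Delta(1/\phi) < D_\mu c(1+R\sqrt K)/R^2$ together with Cauchy--Schwarz $2|\Gamma(u,1/\phi)|\le\sqrt{2\Gamma(u)\cdot 2\Gamma(1/\phi)}$ (and Lemma~\ref{prop:easy}(iii) to bound $\Gamma(u)$), the cut-off contribution enters the resulting quadratic inequality as the coefficient $D_\mu c n(1+R\sqrt K)/(1-\alpha)R^2$ in front of $F$.

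What remains is to combine the curvature defect, the potential errors, and the cut-off gradient error into a single quadratic $F^2 \le \tfrac{n}{1-\alpha}F + t^2 C^2(\alpha,n,K,\vartheta,\eta,\epsilon)$, from which the bound on $F(x_0,T)$ (noting $\phi(x_0)=1$) follows by the standard quadratic-formula estimate, and arbitrariness of $T$ closes the argument. The principal obstacle is exactly this balancing step: one has four sources of error (the $K\Gamma(\sqrt u)$ term from $CDE$, the potential terms $\vartheta$ and $\eta$, the cut-off cross term involving $\Gamma(u,1/\phi)$, and the $\alpha$-fraction of $\Gamma(\sqrt u)$ that must be retained to close the loop) and only two free parameters, $\alpha$ and $\epsilon$, with which to balance them. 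The $K^2$ piece arises exactly as in Theorem~\ref{thm:compactgrecurv} by completing the square in the auxiliary quantity $G = t^*\cdot 2\Gamma(\sqrt u)/u$; the cut-off gradient term is split from $(F+\alpha G)^2$ by the weighted Young inequality $ab \le \epsilon a^2 + b^2/(4\epsilon)$, producing the factor $n(D_w+1)^2/4\alpha(1-\alpha)$ in the main cut-off coefficient; and finally the $\eta$ term is isolated from the remaining $G^2$ slack by a Young inequality of conjugate exponents $(4,4/3)$, which is the source of the $(\eta/\epsilon^{1/4})^{4/3}$ shape in $C(\alpha,n,K,\vartheta,\eta,\epsilon)$. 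Tracking the constants carefully in this three-fold Young decomposition is the routine but delicate heart of the proof.
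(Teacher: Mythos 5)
Your proposal follows essentially the same route as the paper's proof: the same cut-off-weighted test function $F$, the same maximum-point argument via Lemma~\ref{lemma:L(gF)} with $g=u/\phi$, the same case split on the two alternatives of Definition~\ref{def:strong_cutoff}, and the same three-fold balancing at the end (completing the square in $\sqrt{G}$ against $2\alpha GF$ for the cut-off cross term, the $\epsilon$-split of $\alpha^2G^2$ for the $K$-term, and the $(4,4/3)$ Young inequality for the $\eta$-term). One small correction: to keep the cut-off cross term at order $1/R^2$ you must bound $\Gamma(u)/u^2 \le (D_w+1)^2\cdot 2\Gamma(\sqrt{u})/u$ via Lemma~\ref{prop:easy}(i) (this is \eqref{eq:degx} in the paper), retaining the $\sqrt{G}$ factor so it can be absorbed by the $2\alpha GF$ piece --- not Lemma~\ref{prop:easy}(iii), which would only yield a term of order $F/R$ and forfeit the improvement over Theorem~\ref{thm:weakballgre}; your subsequent description of the absorption, with its $(D_w+1)^2$ factor, shows you in fact intend the former.
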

As we noted in the remark above, the lattice $\mathbb{Z}^d$ yields a $(c,R)$-strong cutoff function in the ball $B(x_0,\sqrt{d}R)$ and $CDE(0,2d)$.  As a result Theorem \ref{thm:strongballgre} specializes to the following.  
\begin{corollary}  If $u$ is a solution of the heat equation $\LL u = 0$ in  $B(x_0,\sqrt{d}R)$, then (with the choice of $\alpha = 1/2$):
\[ \frac{\Gamma(\sqrt{u}) - \Delta u}{u}(x_0,t) \leq \frac{4d}{t} + \frac{c(d)}{R^2}\] for some explicit constant $c(d)$ depending on the dimension.
\end{corollary}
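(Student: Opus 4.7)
The plan is to deduce this corollary as a direct specialization of Theorem~\ref{thm:strongballgre} to the integer lattice. Two preliminary facts, both explicitly stated (or promised) in the preceding remark, are needed as inputs: first, that $\mathbb{Z}^d$ equipped with the standard (or normalized) Laplacian satisfies $CDE(2d,0)$; second, that there exists a $(100,R)$-strong cut-off function centered at $x_0$ whose support $S$ lies inside the ball $B(x_0,\sqrt{d}R)$. Once these are in hand, everything reduces to plugging numbers into the conclusion of Theorem~\ref{thm:strongballgre}.

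The substitution is $n=2d$, $K=0$, $\alpha=1/2$, and $q\equiv 0$ (so that $\vartheta=\eta=0$ and the potential hypothesis is trivially satisfied, since $\mathcal{L}u=0$ on $S$). A crucial simplification then occurs: because $K=0$, $\vartheta=0$, and $\eta=0$, every term inside the square root defining $C(\alpha,n,K,\vartheta,\eta,\epsilon)$ vanishes, so this constant drops out entirely regardless of how $\epsilon\in(0,1)$ is chosen. The leading time-dependent term of the theorem becomes $\frac{n}{2(1-\alpha)t}=\frac{2d}{t}$, and the $1/R^2$-term becomes
\[
\frac{D_\mu\cdot 100\cdot 2d}{R^2}\Bigl(1+\tfrac{2d(D_w+1)^2}{4\cdot\frac12\cdot\frac12}\Bigr)\;=\;\frac{200\,d\,D_\mu}{R^2}\bigl(1+2d(D_w+1)^2\bigr),
\]
which is a polynomial in $d$ once the explicit values of $D_\mu$ and $D_w$ for $\mathbb{Z}^d$ are substituted (for the standard Laplacian $D_\mu=D_w=2d$; for the normalized Laplacian $D_\mu=1$, $D_w=2d$). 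This will be the explicit constant $c(d)$ appearing in the statement.

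Finally, I convert the inequality delivered by Theorem~\ref{thm:strongballgre} into the form stated in the corollary. With $\alpha=1/2$ and $q=0$, the left-hand side of the theorem reads $\tfrac{1}{2}\Gamma(\sqrt u)/u-\partial_t\sqrt u/\sqrt u$. Multiplying through by $2$ and using the identity $\partial_t\sqrt u/\sqrt u=\Delta u/(2u)$, which follows from $\partial_t u=\Delta u$, turns the left-hand side into exactly $(\Gamma(\sqrt u)-\Delta u)/u$, matching the corollary. The right-hand side becomes $4d/t+c(d)/R^2$ with $c(d)$ as computed above.

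There is no real obstacle here beyond careful bookkeeping: the whole content of the corollary is packaged inside Theorem~\ref{thm:strongballgre}, and one only needs to verify the vanishing of $C(\alpha,n,K,\vartheta,\eta,\epsilon)$ (the single step where one must be a little attentive) and perform the rewriting via $\partial_t\sqrt u/\sqrt u=\Delta u/(2u)$. The genuine mathematical work, namely the existence of the strong cut-off function on $\mathbb{Z}^d$ supported in $B(x_0,\sqrt d\,R)$ and the verification of $CDE(2d,0)$, is deferred to Section~\ref{sec:examples}, so at this stage they can legitimately be invoked as given.
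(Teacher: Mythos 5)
Your proposal is correct and is exactly the route the paper intends: the corollary is stated as a direct specialization of Theorem~\ref{thm:strongballgre} with $n=2d$, $K=0$, $q\equiv 0$ (hence $\vartheta=\eta=0$ and $C(\alpha,n,K,\vartheta,\eta,\epsilon)=0$), $\alpha=1/2$, using the $(100,R)$-strong cut-off function on $B(x_0,\sqrt{d}R)$ from Proposition~\ref{prop:zdstrong} and $CDE(2d,0)$ from Corollary~\ref{corol:zd}, followed by the rewriting $\partial_t\sqrt{u}/\sqrt{u}=\Delta u/(2u)$ and multiplication by $2$. Your bookkeeping of the constants (including the factor of $2$ that turns $2d/t$ into $4d/t$ and should likewise be absorbed into $c(d)$) matches what the theorem yields.
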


\begin{proof}[Proof of Theorem~\ref{thm:strongballgre}]
We proceed similarly to the proof of Theorem~\ref{thm:weakballgre}, except that we assume $\phi$ is a $(c,R)$-strong cut-off function centered at $x_0$. Let us choose
\[ F = t\phi \cdot \frac{2(1-\alpha)\Gamma(\sqrt{u}) - \Delta u}{u},\] and let $(x^*,t^*)$ denote the place where $F$ attains its maximum in $V\times [0,T]$ for some arbitrary but fixed $T$. Again, our goal is to show that $F(x,T)$ is bounded for 
all $x \in V$, and since $T$ is arbitrary this completes the result.  We bound $F$ by some positive quantity, hence we may assume $F(x^*,t^*) > 0$. This implies $t^* >0, \phi(x^*) >0$, and $2\Gamma(\sqrt{u})-\Delta u \geq 2(1-\alpha)\Gamma(\sqrt{u}) -\Delta u > 0$ at $(x^*,t^*)$. Hence $\Delta \sqrt{u}(x^*,t^*) > 0$ as in the proof of Theorem~\ref{thm:weakballgre}.

First, if $\phi(x^*) \leq \frac{c(1 + R\sqrt{K})}{2R^2}$ then we are done, since 
\[ \frac{2(1-\alpha)\Gamma(\sqrt{u}) -\Delta u}{u} \leq \frac{2\Gamma(\sqrt{u}) -\Delta u}{u} = \frac{-2\Delta \sqrt{u}}{\sqrt{u}} \leq 2D_\mu,\]
as we have seen in the proof of Theorem~\ref{thm:weakballgre}. Thus we may assume that Case 2 of Definition~\ref{def:strong_cutoff} holds.

In what follows all equations are to be understood at $(x^*,t^*)$. We use Lemma~\ref{lemma:L(gF)} with the choice of $F = u/\phi$ to get
\begin{align}
\LL\left(\frac{u}{\phi}\right) F \geq \LL\left(\frac{u}{\phi}F\right) &= -\frac{uF}{t^* \phi} + t^* \cdot \LL(2(1-\alpha)\Gamma(\sqrt{u}) -\Delta u) \nonumber
\\&= -\frac{uF}{t^* \phi} + t^* \cdot \left[ (1-\alpha)\LL(2\Gamma(\sqrt{u})) - \Delta (qu)\right] \nonumber
\\ &= -\frac{uF}{t^* \phi} + t^* \cdot \left[ 4(1-\alpha)\Gammatt(\sqrt{u}) + 2(1-\alpha)\Gamma(\sqrt{u},\sqrt{u}q) - \Delta(qu)\right].
\label{eq:alpha}
\end{align}

On the left hand side we use Cauchy-Schwarz:
\begin{align} \LL\left( \frac{u}{\phi}\right) &= \frac{\LL(u)}{\phi} + \LL\left(\frac{1}{\phi}\right) u + 2\Gamma\left( \frac{1}{\phi}, u \right)\nonumber \\ &= \frac{qu}{\phi} + u \Delta \frac{1}{\phi} + 2\Gamma\left( \frac{1}{\phi}, u \right)\nonumber \\ &\leq \frac{qu}{\phi} + u\Delta \frac{1}{\phi} + 2\sqrt{\Gamma\left(\frac{1}{\phi}\right)} \sqrt{\Gamma(u)}, \label{eq:boundlhs}
\end{align} since $\LL(u) = qu$. 

Collecting the $q$-terms in (\ref{eq:alpha}) and using (\ref{eq:boundlhs}), we observe
that they are 
\begin{align}
\nonumber &t^*\left[2(1-\alpha)\Gamma(\sqrt{u},\sqrt{u}q)-\Delta(qu)\right] - \frac{qu}{\phi}F \\&\nonumber =
t^*\left[(1-\alpha)\left(2\Gamma(\sqrt{u},\sqrt{u}q)-\Delta(qu)-2q\sqrt{u}\Delta\sqrt{u}\right) + \alpha\left(q\Delta(u) - \Delta(qu)\right)\right]\\\nonumber 
 &> -ut^*\left(\vartheta + 2(1-\alpha)\eta\frac{\sqrt{\Gamma(\sqrt{u})}}{\sqrt{u}} + 2\alpha \eta \frac{\sqrt{\Gamma(u)}}{u}\right)\\
& \geq -ut^*\left(\vartheta +2\eta (1+\alpha D_w)\frac{\sqrt{\Gamma(\sqrt{u})}}{\sqrt{u}}\right)\label{eq:qterms}
\end{align}
In the computation above we used several times Cauchy-Schwarz, \eqref{eq:pot3} and the observation that
$\Gamma(u)/u^2$ can be controlled by $\Gamma(\sqrt{u})/u$ in the following way: By Lemma
\ref{prop:easy} (i), and the fact that $\Delta \sqrt{u}(x^*) < 0$, we have that 
$\sqrt{u}(y) < D_w \sqrt{u}(x^*)$ for any $y \sim x^*$. Hence
\begin{align}
\frac{2\Gamma(u)}{u^2} = \asum{y \sim x^*} \left(1 - \frac{u(y)}{u(x^*)}\right)^2 &= \asum{y\sim x^*} \left(1 - \frac{\sqrt{u}(y)}{\sqrt{u}(x^*)}\right)^2 \left(1 + \frac{\sqrt{u}(y)}{\sqrt{u}(x^*)}\right)^2 \nonumber \\ &< \left(D_w +1\right)^2 \frac{2\Gamma(\sqrt{u})}{u}. \label{eq:degx}
\end{align}

Combining \eqref{eq:qterms} with (\ref{eq:alpha}) and multiplying by $t^*\phi^2 / u$ we get
\begin{multline}\label{eq:sqrt}
 (t^*)^2\phi^2\left(\vartheta +2\eta (1+\alpha D_w)\frac{\sqrt{\Gamma(\sqrt{u})}}{\sqrt{u}}\right)+ Ft^* \phi^2\Delta \frac{1}{\phi} + Ft^*\sqrt{2\phi^3\Gamma\left(\frac{1}{\phi}\right)} \sqrt{\phi\frac{2\Gamma(u)}{u^2}} + \phi F \\ >  4(1-\alpha)\frac{\Gammatt(\sqrt{u})}{u}(t^*)^2\phi^2 .
 \end{multline}

Let us introduce the notation $G = 2t^*\phi \Gamma(\sqrt{u})/u$. Using (\ref{eq:degx}), and that $\phi$ is a $(c,R)$-strong cut-off function we can further estimate the left hand side of (\ref{eq:sqrt}) from above:
\begin{align}\label{eq:G}\nonumber
 (t^*)^2\phi^2\vartheta +& \sqrt{2}\eta(1+\alpha D_w) (t^*\phi)^{\frac{3}{2}}\sqrt{G}+
\frac{t^*D_\mu c(1 + R\sqrt{K})}{R^2}F + \phi F \\+& \sqrt{2}\left(D_w+1\right)\left(\frac{t^*D_\mu c}{R^2}\right)^{\frac{1}{2}}F\sqrt{G} > 4(1-\alpha)\frac{\Gammatt(\sqrt{u})}{u}(t^*)^2 \phi^2.
\end{align}

Using that the graph satisfies $CDE(n,-K)$ we can write
\[ 4(t^*)^2\phi^2 \frac{\Gammatt(\sqrt{u})}{u} \geq \frac{1}{n}\left(t^*\phi \frac{2\Delta \sqrt{u}}{\sqrt{u}}\right)^2 - 2K (t^*)^2\phi^2 \frac{2\Gamma(\sqrt{u})}{u} = \frac{(F +\alpha G)^2}{n} - (2t^*\phi)K G.
\] 
Combining with (\ref{eq:G}) we have
\begin{align*}
&\frac{n}{1-\alpha}\left(  (t^*)^2\phi^2\vartheta + \sqrt{2}\eta(1+\alpha D_w) (t^*\phi)^{\frac{3}{2}}\sqrt{G}\right)\\&+ \frac{n}{1-\alpha}\left(\frac{t^*D_\mu c(1 + R\sqrt{K})}{R^2} + \phi + \sqrt{2}\left(D_w+1\right)\left(\frac{t^*D_\mu c}{R^2}\right)^{\frac{1}{2}} \sqrt{G} \right) F \\ > &F^2 + 2\alpha FG  + \alpha^2 G^2 -  2t^*\phi KnG
\end{align*}

Notice that completing the left hand side to a  to a perfect square gives 
\[ 2\alpha GF - \sqrt{2}\left(D_w+1\right) \frac{n}{(1-\alpha)}\left(\frac{t^*D_\mu c}{R^2}\right)^{\frac{1}{2}} \sqrt{G}F \geq - \frac{t^*D_\mu c}{R^2}\left(D_w+1\right)^2 \frac{n^2}{4\alpha(1-\alpha)^2}F
\]
and hence
\begin{align}\nonumber
&\frac{n}{1-\alpha}\left( (t^*)^2\phi^2\vartheta +\sqrt{2}\eta(1+\alpha D_w) (t^*\phi)^{\frac{3}{2}}\sqrt{G}\right) \\&+ \frac{n}{1-\alpha}\left(\frac{t^*D_\mu c(1 + R\sqrt{K})}{R^2} + \phi + (D_w+1)^2 \frac{t^*D_\mu c n}{4\alpha (1-\alpha)R^2}\right)F \\ &> F^2 + \alpha^2 G^2 - 2t^*\phi KnG.\label{eq:G2}
\end{align}
Now we cosider the terms in $G$
\begin{align*}\alpha^2 G^2 - 2t^*\phi KnG - \frac{n}{1-\alpha}\sqrt{2}\eta(1+\alpha aD_w) (t^*\phi)^{\frac{3}{2}}\sqrt{G}
\end{align*}
Defining $F = 2 \frac{\Gamma(\sqrt{u})}{\sqrt{u}}$ we obtain for every $\epsilon \in (0,1)$
\begin{align*}(t^*\phi)^2&\left( \alpha^2F -(1-\epsilon) \alpha^2F + (1-\epsilon) \alpha^2F -2KnF -\frac{n}{1-\alpha}\sqrt{2}\eta(1+\alpha D_w) \sqrt{F}  \right)\\\geq & (t^*\phi)^2\left(\epsilon \alpha^2 F^2 - \frac{K^2n^2}{(1-\epsilon)\alpha^2} -\frac{n}{1-\alpha}\sqrt{2}\eta(1+\alpha D_w) \sqrt{F}\right)\\\geq& (t^*\phi)^2\left( - \frac{K^2n^2}{(1-\epsilon)\alpha^2} - \left(\frac{n(1 +\alpha D_w)\eta}{(1-\alpha)\alpha^{1/2}\epsilon^{1/4}} \right)^{\frac{4}{3}}\right)
\end{align*}

This combined with (\ref{eq:G2}) now yields
\begin{align*}
\frac{n}{1-\alpha}& \left( \frac{t^*D_\mu c(1 + R\sqrt{K})}{R^2} + \phi + (D_w+1)^2\frac{t^*D_\mu c n}{4\alpha (1-\alpha)R^2}\right)F\\ &+  (t^*\phi)^2 
\left[\frac{n}{1-\alpha}\vartheta +   \frac{K^2n^2}{(1-\epsilon)\alpha^2} + \left(\frac{n(1 +\alpha D_w)\eta}{(1-\alpha)\alpha^{1/2}\epsilon^{1/4}} \right)^{\frac{4}{3}} \right] 
\geq F^2, 
\end{align*}
which easily implies
\[
F < \frac{n}{1-\alpha}\left( \frac{t^*D_\mu c(1 + R\sqrt{K})}{R^2} + \phi + \left(D_w+1\right)^2\frac{t^*D_\mu c n}{4\alpha (1-\alpha)R^2} \right) +  t^*\phi C(\alpha,  n, K, \vartheta, \eta, \epsilon)
\]
where
\begin{align*}C(\alpha,  n, K, \vartheta, \eta,\epsilon)= \sqrt{\frac{n}{1-\alpha}\vartheta +   \frac{K^2n^2}{(1-\epsilon)\alpha^2} + \left(\frac{n(1 +\alpha D_w)\eta}{(1-\alpha)\alpha^{1/2}\epsilon^{1/4}} \right)^{\frac{4}{3}}}.
\end{align*}

Using that $\phi \leq 1,$  $\phi(x_0) =1$,  $t^* \leq T$, and $F(x_0,T) \leq F(x^*,t^*)$, and finally dividing by $T$
 we get the desired upper bound 
 \begin{align*} &\frac{(1-\alpha)2\Gamma(\sqrt{u}) - \Delta u}{u}(x_0,T)\\ &< \frac{n}{1-\alpha}\left( \frac{D_\mu c(1 + R\sqrt{K})}{R^2} + \frac{1}{T} + \left(D_w+1\right)^2\frac{D_\mu c n}{4\alpha (1-\alpha)R^2}\right) +    C(\alpha, n, K, \vartheta, \eta,\epsilon).
\end{align*}

\end{proof}

\section{Harnack inequalities} \label{harnack}
In this section we explain how the gradient estimates can be used to derive Harnack-type inequalities. The proof is based on the method used by Li and Yau in~\cite{LiYau}, though the discrete space does pose some extra difficulty.

In order to state the result in complete generality (in particular, when $f$ is a solution to $(\mathcal{L}-q)f = 0$ as opposed to a solution to the heat equation), we need to introduce
a discrete analogue of the Agmon distance between two points $x$, and $y$ which are connected in $B(x_0,R)$.  For a path $p_0p_1\dots p_k$ define the length of the path to be $\ell(P) = k$.  Then in a graph with maximum measure $\mu_{\mathrm{max}}$:
\begin{multline*}
\varrho_{q,x_0,R,\mu_{\mathrm{max}}, w_{\mathrm{min}}, \alpha}(x,y,T_1,T_2) = \inf \left\{ \frac{2\mu_{\mathrm{max}}\ell^2(P)}{w_{\mathrm{min}}(1-\alpha)(T_2-T_1)}\right. \\+ \left. \sum_{i=0}^{k-1} \left( \int_{t_i}^{t_{i+1}} q(x_i,t)dt + \frac{k}{(T_2-T_1)^2}\int_{t_i}^{t_{i+1}} (t-t_i)^2(q(x_i,t) - q(x_{i+1},t)) dt \right) \right\}
\end{multline*}
where the infinum is taken over the set of all paths $P = p_0p_1p_2p_3\dots p_k$ so that
$p_0 = x$, $p_k = y$ and having all $p_i \in B(x_0,R)$, and the times $T_1=t_0 , t_1, t_2, \dots, t_{k}=T_2$ evenly divide the interval $[T_1,T_2]$. In the case when the graph satisfies $CDE(n,0)$ one can set $\alpha =0$.

\begin{remark}
In the special case where $q \equiv 0$ and $R = \infty$, which will arise when $f$ is a 
solution to the heat equation on the entire graph, then $\varrho$ simplifies drastically.  In particular, 
\[
\varrho_{\mu_{\mathrm{max}},\alpha, w_{\mathrm{min}}}(x,y,t_1,t_2) = \frac{2\mu_{\mathrm{max}} d(x,y)^2}{(1-\alpha)(T_2-T_1)w_{\mathrm{min}}},
\]
where $d(x,y)$ denotes the usual graph distance. 
\end{remark}

\begin{theorem} Let $G(V,E)$ be a graph with measure bound $\mu_{\mathrm{max}}$, and suppose that a function $f: V \times \R \to \R$ satisfies
\[ (1-\alpha)\frac{\Gamma(f)}{f^2}(x,t) - \frac{\partial_t f}{f}(x,t) - q(x,t) \leq \frac{c_1}{t} + c_2 \] 
whenever $x \in B(x_0,R)$ for $x_0 \in V$ along with some $R \geq 0$, some $0 \leq \alpha < 1$ and positive constants $c_1,c_2$. Then for $T_1 < T_2$ and $x,y \in V$ we have
\[ f(x,T_1) \leq f(y,T_2) \left(\frac{T_2}{T_1}\right)^{c_1}\cdot \exp \left( c_2(T_2 - T_1) + \varrho_{q,x_0,R,\mu_{\mathrm{max}}, w_{\mathrm{min}}, \alpha}(x,y,T_1,T_2) \right)
\] \label{thm:harnack}
\end{theorem}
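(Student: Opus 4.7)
The plan is to adapt the Li--Yau integration argument from \cite{LiYau} to the discrete setting, treating the hypothesis as a pointwise lower bound
\[
\partial_t \log f(x,t) \geq (1-\alpha)\frac{\Gamma(f)(x,t)}{f(x,t)^2} - q(x,t) - \frac{c_1}{t} - c_2
\]
valid for $x \in B(x_0, R)$. I fix an arbitrary admissible path $P = p_0 p_1 \cdots p_k$ with $p_0 = x$, $p_k = y$, all $p_i \in B(x_0,R)$, of length $\ell(P) = k$, together with the equally spaced times $t_i = T_1 + i\tau$ where $\tau = (T_2-T_1)/k$, and telescope
\[
\log f(x, T_1) - \log f(y, T_2) = \sum_{i=0}^{k-1}\bigl[\log f(p_i, t_i) - \log f(p_{i+1}, t_{i+1})\bigr],
\]
splitting each summand at the intermediate point $(p_i, t_{i+1})$ into a temporal piece along the constant-vertex line $\{p_i\} \times [t_i, t_{i+1}]$ and a spatial jump from $p_i$ to $p_{i+1}$ (later parametrized as a convex combination weighted by $s = (t-t_i)/\tau$, which is what will ultimately produce the $(t-t_i)^2$-weighted correction appearing in $\varrho$).

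For the temporal piece, $\log f(p_i, t_i) - \log f(p_i, t_{i+1}) = -\int_{t_i}^{t_{i+1}} \partial_t \log f(p_i, t)\, dt$, and applying the gradient estimate yields the upper bound
\[
\int_{t_i}^{t_{i+1}}\left[q(p_i, t) + \frac{c_1}{t} + c_2 - (1-\alpha)\frac{\Gamma(f)(p_i, t)}{f(p_i, t)^2}\right] dt.
\]
Summed over $i$, the $c_1/t$ term integrates to $c_1 \log(T_2/T_1)$, the $c_2$ term to $c_2(T_2-T_1)$, and the $q$ terms to the diagonal contribution $\sum_i \int_{t_i}^{t_{i+1}} q(p_i, t)\, dt$ of $\varrho$. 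Crucially, the negative $-(1-\alpha)\int \Gamma(f)/f^2\,dt$ contributions are \emph{retained} as a budget to absorb the spatial jumps below.

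For the spatial jump $\log f(p_i, t_{i+1}) - \log f(p_{i+1}, t_{i+1})$, I apply the elementary bound $\log(a/b) \leq (a-b)/b$ (or the slightly sharper $\log(a/b) \leq (a-b)/\sqrt{ab}$, which follows from AM--GM) together with
\[
\bigl(f(p_i, t) - f(p_{i+1}, t)\bigr)^2 \leq \frac{2\mu_{\max}}{w_{\min}} \Gamma(f)(p_i, t),
\]
which is immediate from the definition of $\Gamma$. Young's inequality $AB \leq \varepsilon A^2 + B^2/(4\varepsilon)$, with $\varepsilon$ chosen so that $\varepsilon A^2$ matches the $(1-\alpha)\tau\cdot \Gamma(f)(p_i,\cdot)/f(p_i,\cdot)^2$ budget saved from the temporal step, absorbs the positive $\Gamma/f^2$ contribution of each spatial jump into that budget. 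The leftover Young residual $\mu_{\max}/\bigl(2 w_{\min}(1-\alpha)\tau\bigr)$ per segment, summed over the $k$ segments, yields $k^2 \mu_{\max}/\bigl(2 w_{\min}(1-\alpha)(T_2-T_1)\bigr) = \ell(P)^2 \mu_{\max}/\bigl(2 w_{\min}(1-\alpha)(T_2-T_1)\bigr)$, matching the leading geometric piece of $\varrho$ up to an absorbed constant.

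Taking the infimum over admissible paths and time subdivisions yields exactly $\varrho_{q,x_0,R,\mu_{\max}, w_{\min}, \alpha}(x,y,T_1,T_2)$, and exponentiating the additive estimate $\log f(x,T_1) - \log f(y,T_2) \leq c_1 \log(T_2/T_1) + c_2(T_2-T_1) + \varrho$ gives the stated inequality. The main obstacle is the absence of the chain rule: in the continuous Li--Yau proof $|\nabla \log f|^2$ enters Young's inequality naturally, whereas here $\Gamma(f)/f^2$ is pinned at a single vertex, so one must carefully coordinate the vertex at which $\Gamma/f^2$ appears on the spatial side with the vertex at which the gradient estimate is invoked on the temporal side. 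Extracting the $(t-t_i)^2$-weighted $q$-correction cleanly -- it emerges from carrying out the spatial jump at a time-dependent interior point rather than at the endpoint $t_{i+1}$ -- and keeping every Young's-inequality constant balanced so that the absorbed and residual terms line up with the stated coefficients will be the technical heart of the argument.
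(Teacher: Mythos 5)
Your overall strategy --- telescoping along a path with equally spaced times, bounding the temporal pieces by the gradient estimate while retaining the negative $(1-\alpha)\int\Gamma(f)/f^2$ terms as a budget, and absorbing the spatial jumps into that budget --- is exactly the architecture of the paper's proof. But the concrete absorption mechanism you describe does not work, and the missing ingredient is precisely the step you defer to ``the technical heart.'' The difficulty is a pointwise-versus-integral mismatch: the spatial jump contributes $\log\bigl(f(p_i,s)/f(p_{i+1},s)\bigr)\le \psi(s)$ evaluated at a \emph{single} time $s$, whereas the retained budget is $-\tfrac{1}{c}\int\psi(t)^2\,dt$ over the whole subinterval. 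Young's inequality at the fixed time $s=t_{i+1}$ bounds $\psi(s)$ by $\varepsilon\psi(s)^2+1/(4\varepsilon)$, but $\psi(s)^2$ at one instant cannot be dominated by the time integral of $\psi^2$ without further information about $\psi$. The paper resolves this by leaving the jump time $s$ free and invoking Lemma~\ref{lem:ineq}: the minimum over $s$ of $\psi(s)-\tfrac1c\int_s^{T_2}\psi^2$ is bounded by an average against the weight $\phi(s)\propto(s-T_1)$, after which the elementary comparison $2x-x^2\le1$ can be applied \emph{pointwise under the integral}. This averaging is also what produces the $(t-t_i)^2$-weighted $q$-correction in $\varrho$, and it is responsible for the constant $\tfrac{2\mu_{\mathrm{max}}\ell^2}{w_{\mathrm{min}}(1-\alpha)(T_2-T_1)}$ --- a factor of $4$ larger than the ``complete the square at a single time'' heuristic behind your claimed residual $\mu_{\mathrm{max}}\ell^2/(2w_{\mathrm{min}}(1-\alpha)(T_2-T_1))$, which would be attainable only if $\psi$ were constant in time.

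A second, related defect is in your vertex bookkeeping. You place the entire temporal integration of segment $i$ at the vertex $p_i$, so your retained budget is $\Gamma(f)(p_i,t)/f(p_i,t)^2$; but the jump bound $\log(a/b)\le(a-b)/b$ normalizes by the \emph{destination} value $f(p_{i+1},\cdot)$, and the inequality actually needed is $\Gamma(f)(p_{i+1},t)\ge\tfrac{w_{\mathrm{min}}}{2\mu_{\mathrm{max}}}(f(p_i,t)-f(p_{i+1},t))^2$, at the destination vertex. (Your alternative $\log(a/b)\le(a-b)/\sqrt{ab}$ is false when $a<b$, so it cannot rescue a symmetric normalization.) The paper's decomposition splits each segment's temporal integration at the jump time $s$ --- integrating at $p_i$ on $[t_i,s]$ and at $p_{i+1}$ on $[s,t_{i+1}]$, discarding the first budget and keeping only the one at $p_{i+1}$ --- which makes the normalizations line up. With these two repairs (the free jump time together with Lemma~\ref{lem:ineq}, and the split of the temporal integration across the two endpoints of each edge) your outline becomes the paper's argument; as written, it does not close.
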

In the case of unweighted graphs, and when dealing with positive solutions to the heat
equation everywhere, Theorem \ref{thm:harnack} simplifies greatly.  
\begin{corollary} Suppose $G(V,E)$ is a finite or infinite unweighted graph satisfying $CDE(n,0)$, and $\mu(x) = \deg(x)$ for all vertices $x \in V$.  If $u$ is a positive solution to the heat equation on $G$, then
\[
u(x,T_1) \leq u(y,T_2) \left( \frac{T_2}{T_1}\right)^{n} \exp\left(\frac{4D d(x,y)^2}{T_2-T_1}\right),
\]  
where $D$ denotes the maximum degree of a vertex in $G$.  
\end{corollary}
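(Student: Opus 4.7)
The plan is to deduce this from Theorem \ref{thm:harnack} by applying it to $f = \sqrt{u}$ rather than $u$ itself, which is natural since the gradient estimates of Section \ref{gradest} are consistently stated in terms of $\sqrt{u}$. The key observation is that for any positive function $f$ one has $\Gamma(f)/f^2 = \Gamma(\sqrt{f^2})\cdot(1/f^2)$, and in particular when $f = \sqrt{u}$:
\[
\frac{\Gamma(f)}{f^2} - \frac{\partial_t f}{f} = \frac{\Gamma(\sqrt{u})}{u} - \frac{\partial_t \sqrt{u}}{\sqrt{u}}.
\]

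First, I would invoke the gradient estimate for the heat equation on graphs satisfying $CDE(n,0)$. In the finite case this is Theorem \ref{thm:compactgre}, and in the infinite bounded-degree case this is the corollary immediately following Theorem \ref{thm:weakballgre}. Either way, since $G$ satisfies $CDE(n,0)$ and $u$ is a positive solution to the heat equation on all of $G$, we obtain the global bound
\[
\frac{\Gamma(\sqrt{u})}{u} - \frac{\partial_t \sqrt{u}}{\sqrt{u}} \leq \frac{n}{2t}
\]
valid at every vertex and every $t>0$.

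Next, I would apply Theorem \ref{thm:harnack} to the positive function $f = \sqrt{u}$ with the parameter choices $\alpha = 0$, $q \equiv 0$, $R = \infty$, $c_1 = n/2$, and $c_2 = 0$. Because $q \equiv 0$ and $R = \infty$, the Agmon-type distance $\varrho$ collapses to the simple form noted in the remark preceding Theorem \ref{thm:harnack}; moreover, since the graph is unweighted we have $w_{\min} = 1$, and since $\mu(x) = \deg(x)$ we have $\mu_{\max} = D$. Thus
\[
\varrho(x,y,T_1,T_2) = \frac{2D\,d(x,y)^2}{T_2 - T_1}.
\]
Plugging into the conclusion of Theorem \ref{thm:harnack}, we get
\[
\sqrt{u}(x,T_1) \leq \sqrt{u}(y,T_2) \left(\frac{T_2}{T_1}\right)^{n/2}\exp\!\left(\frac{2D\,d(x,y)^2}{T_2-T_1}\right).
\]

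Finally, squaring both sides yields exactly the desired Harnack inequality with the constants $(T_2/T_1)^n$ and $4D\,d(x,y)^2/(T_2-T_1)$ in the exponent. There is no real obstacle here beyond recognizing that Theorem \ref{thm:harnack} must be applied to $\sqrt{u}$ (so that the $c_1 = n/2$ from the gradient estimate doubles to the $n$ in the statement, and the exponent $2\varrho$ arising from squaring produces the factor $4D$ rather than $2D$); all other simplifications are bookkeeping from the unweighted, normalized setting.
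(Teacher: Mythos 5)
Your proposal is correct and is exactly the argument the paper intends: the remark immediately following the corollary explains that one applies Theorem \ref{thm:harnack} to $\sqrt{u}$ with $c_1 = n/2$ (from Theorem \ref{thm:compactgre} or the corollary to Theorem \ref{thm:weakballgre}), uses the simplified form of $\varrho$ with $q\equiv 0$, $R=\infty$, $\alpha=0$, $w_{\min}=1$, $\mu_{\max}=D$, and then squares, doubling both the power of $T_2/T_1$ and the exponent. Your bookkeeping of all the constants matches the paper's.
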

\begin{remark}
Observe that in the application of Theorem $\ref{thm:harnack}$ to prove the corollary,
one may take $c_1 = \frac{n}{2}$ (see Theorem \ref{thm:potential} and Theorem \ref{thm:weakballgre}), but Theorem $\ref{thm:harnack}$ naturally compares
$\sqrt{u(x,T_1)}$ to $\sqrt{u(x,T_2)}.$  To compare $u(x,T_1)$ to $u(x,T_2)$ requires 
squaring both sides and introduces a factor of two in the exponent.
\end{remark}
Before we give the proof of Theorem \ref{thm:harnack}, we need one simple lemma.

\begin{lemma} For any $c > 0$ and any functions $\psi, q_1, q_2 : [T_1,T_2] \to \R$, we have  \label{lem:ineq} 
\begin{align*} \min_{s \in [T_1,T_2]}  \psi(s) &- \frac{1}{c} \int_s^{T_2} \psi^2(t)dt + \int_{T_1}^{s} q_1(t) dt + \int_{s}^{T_2} q_2(t)dt \  \\&\leq \frac{c}{T_2 - T_1} + 
\int_{T_1}^{T_2} q_1(t)dt + \frac{1}{(T_2-T_1)^2}\int_{T_1}^{T_2} (t-T_1)^2(q_2(t) - q_1(t)) dt.
\end{align*}
\end{lemma}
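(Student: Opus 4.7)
The plan is to bound $\min_s G(s)$, where $G(s)$ denotes the entire expression inside the $\min$, by its weighted average against a suitable probability density on $[T_1,T_2]$, and then to handle the awkward mixed term via AM--GM. The crucial choice is
\[
w(s) = \frac{2(s-T_1)}{(T_2-T_1)^2},
\]
which satisfies $\int_{T_1}^{T_2} w(s)\,ds = 1$. I expect this linear weight, rather than the uniform one, to be exactly what matches the $(t-T_1)^2$ appearing on the right-hand side.

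The second step is a straightforward application of Fubini. Switching the order of integration in each of the three nested integrals produces
\begin{align*}
\int_{T_1}^{T_2} w(s)\!\!\int_s^{T_2}\!\!\psi^2(t)\,dt\,ds &= \frac{1}{(T_2-T_1)^2}\int_{T_1}^{T_2}\psi^2(t)(t-T_1)^2\,dt,\\
\int_{T_1}^{T_2} w(s)\!\!\int_{T_1}^s\!\! q_1(t)\,dt\,ds &= \int_{T_1}^{T_2} q_1(t)\,dt - \frac{1}{(T_2-T_1)^2}\int_{T_1}^{T_2} q_1(t)(t-T_1)^2\,dt,\\
\int_{T_1}^{T_2} w(s)\!\!\int_s^{T_2}\!\!q_2(t)\,dt\,ds &= \frac{1}{(T_2-T_1)^2}\int_{T_1}^{T_2} q_2(t)(t-T_1)^2\,dt.
\end{align*}
Assembling these, the $q_1,q_2$ terms in $\int w(s)G(s)\,ds$ collapse precisely to $\int_{T_1}^{T_2} q_1\,dt + \frac{1}{(T_2-T_1)^2}\int_{T_1}^{T_2}(t-T_1)^2(q_2-q_1)\,dt$, which is exactly the non-$\psi$ portion of the claimed bound.

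It remains to control
\[
A := \frac{2}{(T_2-T_1)^2}\int_{T_1}^{T_2}(s-T_1)\psi(s)\,ds \;-\; \frac{1}{c(T_2-T_1)^2}\int_{T_1}^{T_2}(t-T_1)^2\psi^2(t)\,dt
\]
by $c/(T_2-T_1)$. For this I will use AM--GM pointwise in the form $2ab \le a^2+b^2$ with $a = (s-T_1)\psi(s)/\sqrt{c}$ and $b=\sqrt{c}$, giving $2(s-T_1)\psi(s) \le (s-T_1)^2\psi^2(s)/c + c$. Integrating this inequality and dividing by $(T_2-T_1)^2$ cancels the $\psi^2$ term against the subtracted one and leaves exactly $\frac{c}{(T_2-T_1)^2}\int_{T_1}^{T_2}ds = \frac{c}{T_2-T_1}$, as desired.

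Since $\min_{s}G(s) \le \int_{T_1}^{T_2} w(s)G(s)\,ds$ for any probability density $w$, combining the two computations above yields the inequality claimed in the lemma. The only delicate point is the choice of weight; once $w(s) = 2(s-T_1)/(T_2-T_1)^2$ is fixed, everything is bookkeeping plus a one-line AM--GM, so I do not anticipate any serious obstacle.
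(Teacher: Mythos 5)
Your proof is correct and follows essentially the same route as the paper: the paper also averages against the linear weight $\phi(t)=\tfrac{2}{c}(t-T_1)$ (which, after normalizing, is exactly your density $w$), performs the same Fubini swaps, and closes with $2x-x^2\le 1$, which is your AM--GM in disguise.
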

\begin{proof} 
We bound  the minimum by an averaged sum. Let $\phi(t) = \frac{2}{c}(t-T_1)$. Then
\begin{align*}
\min_{s \in [T_1,T_2]} \psi(s) -& \frac{1}{c}\int_s^{T_2} \psi^2(t)dt + \int_{T_1}^{s} q_1(t) dt + \int_{s}^{T_2} q_2(t)dt  \\\leq & \frac{\int_{T_1}^{T_2} \phi(s)\left(\psi(s) - \frac{1}{c}\int_s^{T_2} \psi^2(t)dt +  \int_{T_1}^{s} q_1(t) dt + \int_{s}^{T_2} q_2(t)dt\right)ds}{\int_{T_1}^{T_2} \phi(s)ds} \\ =& \frac{c}{(T_2-T_1)^2 }\left(\int_{T_1}^{T_2} \phi(s)\psi(s)ds - \frac{1}{c}\int_{T_1}^{T_2} \psi^2(t) \int_{T_1}^t \phi(s)ds dt\right. \\
&\;\;\;\;\;\;\;\;\;\;\left.+ \int_{T_1}^{T_2} q_1(t) \int_{t}^{T_2} \phi(s)dsdt + \int_{T_1}^{T_2} q_2(t) 
\int_{T_1}^{t} \phi(s)dsdt \right) \\=&\frac{c}{(T_2-T_1)^2} \left[\int_{T_1}^{T_2} \left(2\frac{t-T_1}{c}\psi(t) - \psi^2(t) \left(\frac{t-T_1}{c}\right)^2\right) dt\right.  \\
&\;\;\;\;\;\;\;\;\;\; \left. + \int_{T_1}^{T_2} \frac{(T_2-T_1)^2 - (t-T_1)^2}{c} q_1(t) dt + \int_{T_1}^{T_2} \frac{(t-T_1)^2}{c} q_2(t)dt \right] 
\\& \leq \frac{c}{T_2-T_1} + \int_{T_1}^{T_2} q_1(t)dt + \frac{1}{(T_2-T_1)^2}\int_{T_1}^{T_2} (t-T_1)^2(q_2(t) - q_1(t)) dt.
\end{align*} as we claimed, since $2x - x^2 \leq 1$.
\end{proof}

With this, we can return to the proof of Theorem \ref{thm:harnack}.

\begin{proof}[Proof of Theorem \ref{thm:harnack}]
Let us first assume that $x \sim y$. Then for any $s \in [T_1, T_2]$ we can write
\begin{align*} 
\log f(x,T_1) - \log f(y,T_2) =& \log \frac{f(x,T_1)}{f(x,s)} + \log \frac{f(x,s)}{f(y,s)} + \log \frac{f(y,s)}{f(y,T_2)}  \\=& - \int_{T_1}^s \partial_t \log f(x,t) dt + \log \frac{f(x,s)}{f(y,s)} - \int_{s}^{T_2} \partial_t \log f(y,t)dt
\end{align*}
We use the assumption that 
\[ 
-\partial_t \log f = -\frac{\partial_t f}{f} \leq  \frac{c_1}{t} + c_2 - (1-\alpha)\frac{\Gamma(f)}{f^2}  + q 
\] 
to deduce
\begin{align*} 
&\log f(x,T_1) - \log f(y,T_2) \\ \leq &\int_{T_1}^{T_2} \frac{c_1}{t} + c_2 dt - (1-\alpha)\left( \int_{T_1}^s \frac{\Gamma(f)}{f^2}(x,t) dt + \int_{s}^{T_2} 
 \frac{\Gamma(f)}{f^2}(y,t) dt \right) + \log \frac{f(x,s)}{f(y,s)} \\& + \int_{T_1}^{s} q(x,t) dt + \int_{s}^{T_2} q(y,t) dt \\ \leq&
 c_1\log\frac{T_2}{T_1} + c_2(T_2 - T_1) - \frac{(1-\alpha)w_{\mathrm{min}}}{2\mu_{\mathrm{max}}} \int_s^{T_2} \left|\frac{f(y,t)-f(x,t)}{f(y,t)}\right|^2 + \frac{f(x,s) - f(y,s)}{f(y,s)}
\\& + \int_{T_1}^{s} q(x,t) dt + \int_{s}^{T_2} q(y,t) dt.
\end{align*}
In the second step we threw away the $\int_{T_1}^s$ term, and used  $\Gamma(f)(y,t) \geq \frac{1}{2}w_{\mathrm{min}}(f(y,t)-f(x,t))^2/\mu_{\mathrm{max}}$ as well as the fact that $\log r \leq r-1$ for any $r\in \R$.

We are free to choose the value of $s$ for which the right hand side is minimal.  We use Lemma \ref{lem:ineq}, with the choice of $\psi(t) = f(x,t)/f(y,t) - 1$ and $c = (1- \alpha)w_{\mathrm{min}}/2\mu_{\mathrm{max}}$ along with $q_1(t) = q(x,t)$ and $q_2(t) = q(y,t)$ to get
\begin{align}
\log f(x,T_1) - \log f(y,T_2) \leq & c_1\log\frac{T_2}{T_1} + c_2(T_2 - T_1) + \frac{2\mu_{\mathrm{max}}}{(1-\alpha)(T_2 - T_1)w_{\mathrm{min}}} \nonumber\\ & + \int_{T_1}^{T_2} q(x,t)dt + \frac{1}{(T_2-T_1)^2}\int_{T_1}^{T_2} (t-T_1)^2(q(y,t) - q(x,t)) dt. \label{eq:adjacent}
\end{align}

To handle the case when $x$ and $y$ are not adjacent, simply let $x = x_0, x_1, \dots, x_k = y$ denote a path $P$ between $x$ and $y$ entirely within $B(x_0,R)$, and let $T_1 = t_0 < t_1 < \dots < t_k = T_2$ denote a subdivision of the time interval $[T_1,T_2]$ into $k$ equal parts. 
For any $0 \leq i \leq k-1$ we can use (\ref{eq:adjacent}) to get
\begin{align*}
 \log f(x,T_1) -& \log f(y,T_2) \\ =& \sum_{i = 0}^{k-1} \big[\log f(x_i,t_i) - \log f(x_{i+1},t_{i+1})\big]  \\ \leq& \sum_{i = 0}^{k-1} \left( c_1\log \frac{t_{i+1}}{t_i} + c_2(t_{i+1}-t_i) + \frac{2\mu_{\mathrm{max}}}{(1-\alpha)\frac{T_2 - T_1}{k} w_{\mathrm{min}}} \right)  \\ & + \sum_{i=0}^{k-1} \left(\int_{t_i}^{t_{i+1}} q(x_i,t)dt + \frac{k}{(T_2-T_1)^2}\int_{t_i}^{t_{i+1}} (t-t_i)^2(q(x_i,t) - q(x_{i+1},t)) dt \right)
\\
 \leq &  c_1 \log \frac{T_2}{T_1} + c_2(T_2 - T_1) + \frac{2k^2 \mu_{\mathrm{max}}}{(1-\alpha)(T_2 - T_1)w_{\mathrm{min}}} \\
 & +  \sum_{i=0}^{k-1} \left( \int_{t_i}^{t_{i+1}} q(x_i,t)dt + \frac{k}{(T_2-T_1)^2}\int_{t_i}^{t_{i+1}} (t-t_i)^2(q(x_i,t) - q(x_{i+1},t)) dt. \right)
 \end{align*}
 Minimizing all paths, we have that
 \[
 \log f(x,T_1) - \log f(y,T_2)  \leq c_1 \log \frac{T_2}{T_1} + c_2(T_2-T_1) + \varrho_{q,x_0,R,\mu_{\mathrm{max}},w_{\mathrm{min}}, \alpha}(x,y,T_1,T_2).
 \]
Hence
\[ f(x,T_1) \leq f(y,T_2) \left(\frac{T_2}{T_1}\right)^{c_1}\cdot \exp \left( c_2(T_2 - T_1) + \varrho_{q,x_0,R,\mu_{\mathrm{max}},w_{\mathrm{min}}, \alpha}(x,y,t_1,t_2) \right)
\] as was claimed.

\end{proof}

\section{Examples}\label{sec:examples}

In this section we show that our curvature notion behaves somewhat as expected, by computing curvature lower bounds for certain classes of graphs. We also show that $\Z^d$ admits strong cut-off functions in the sense of Definition~\ref{def:strong_cutoff}.

\subsection{General graphs and trees}

Here we prove that every graph satisfies $CDE\left(2,-D_\mu\left(\frac{D_w}{2}+1\right)\right)$. We show that this bound is close to sharp for graphs that are locally trees, in particular the curvature of a $D$-regular large girth graph goes to $-\infty$ linearly as $D \to \infty$. 
\begin{theorem} \label{thm:lb}
Suppose $G$ is any graph with $D_w = \max_{x \sim y} \frac{\deg(x)}{w_{xy}}$ and $D_\mu = \max \frac{\deg(x)}{\mu(x)}$.  Then $G$ satisfies $CDE\left(2,-D_\mu\left(\frac{D_w}{2}+1\right)\right)$
\end{theorem}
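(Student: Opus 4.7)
The plan is to reduce the claim to an auxiliary $\Gamma$-only bound via Cauchy--Schwarz, and then to expand $\widetilde{\Gamma}_2(f)(x)$ directly by bilinearity. Write $a_y = w_{xy}/\mu(x)$, $u_y = f_y-f_x$, and $\sigma = \deg(x)/\mu(x) \leq D_\mu$. Cauchy--Schwarz applied to $\Delta f(x) = \sum_{y \sim x} a_y u_y$ gives $(\Delta f(x))^2 \leq \sigma\cdot 2\Gamma(f)(x) \leq 2D_\mu\Gamma(f)(x)$, so $CDE(2,-D_\mu(D_w/2+1))$ at $x$ will follow as soon as one establishes the $\Gamma$-only auxiliary estimate
\[ \widetilde{\Gamma}_2(f)(x) \geq -(D_\mu D_w/2)\,\Gamma(f)(x). \]

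For this auxiliary estimate I would use the Remark's formula $2\widetilde{\Gamma}_2(f) = \Delta\Gamma(f) - 2\Gamma(f, \Delta(f^2)/(2f))$ together with $\Delta(f^2)/(2f) = \Delta f + \Gamma(f)/f$ (which holds on any graph by the discrete Leibniz rule $\Delta(f^2) = 2f\Delta f + 2\Gamma(f)$). Expanding the cross term by bilinearity and simplifying using the elementary identity $u_y/f_y = 1 - f(x)/f_y$ produces a crucial cancellation: the $\sum_y a_y\Gamma(f)(y)$ contributions coming from $\Delta\Gamma(f)(x)$ and from $-2\Gamma(f,\Gamma(f)/f)(x)$ exactly annihilate, leaving
\[ 2\widetilde{\Gamma}_2(f)(x) = (\Delta f(x))^2 - \sigma\Gamma(f)(x) + \frac{\Delta f(x)\,\Gamma(f)(x)}{f(x)} - \sum_y a_y u_y \Delta f(y) + f(x)\sum_y a_y\frac{\Gamma(f)(y)}{f_y}. \]
The last summand is manifestly non-negative. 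The ``bad'' term $\Delta f(x)\,\Gamma(f)(x)/f(x)$ is non-positive since $\Delta f(x)<0$ by hypothesis, but it is bounded below by $-D_\mu\Gamma(f)(x)$ using the elementary inequality $|\Delta f(x)| \leq \sigma f(x) \leq D_\mu f(x)$, which follows from positivity of $f$ combined with $\Delta f(x)<0$ (indeed $\sigma f(x) - |\Delta f(x)| = \sum_y a_y f_y > 0$).

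It remains to control the second-neighborhood sum $\sum_y a_y u_y \Delta f(y)$. The plan is to estimate each factor termwise via $|u_y| \leq \sqrt{2\Gamma(f)(x)/a_y}$ and $|\Delta f(y)| \leq \sqrt{2D_\mu\Gamma(f)(y)}$, and then apply an outer Cauchy--Schwarz together with the bound $n(x) \leq D_w$ on the number of neighbors of $x$ (which itself follows from the edge-bound $a_y \geq \sigma/D_w$). This produces
\[ \Bigl|\sum_y a_y u_y\Delta f(y)\Bigr| \leq 2\sqrt{D_\mu D_w\,\Gamma(f)(x)\sum_y a_y\Gamma(f)(y)}, \]
which is precisely where the factor $D_w$ enters the final constant. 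A weighted AM--GM then splits this bound into a $D_\mu D_w\Gamma(f)(x)$ piece and a residual multiple of $\sum_y a_y\Gamma(f)(y)$. The main technical obstacle is to absorb this residual into the positive sum $f(x)\sum_y a_y\Gamma(f)(y)/f_y$ kept aside from the expansion; this requires combining the pointwise edge-bound $\Gamma(f)(y) \geq \frac{w_{yx}}{2\mu(y)} u_y^2$ (obtained by restricting the defining sum of $\Gamma(f)(y)$ to the single edge $yx$) with the elementary estimates $\sum_y a_y f_y \leq \sigma f(x)$ (from $\Delta f(x)<0$) and $w_{yx}/\mu(y) \geq \sigma(y)/D_w$. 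Carefully tracking all constants in this absorption yields the desired auxiliary inequality and hence the theorem.
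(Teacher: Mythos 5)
Your algebraic identity for $2\Gammatt(f)(x)$ is correct --- it is a repackaging of the expansion the paper itself performs --- and the reduction of $CDE(2,-D_\mu(D_w/2+1))$ to a $\Gamma$-only bound via $(\Delta f)^2\le 2D_\mu\Gamma(f)$ is logically valid. The gap is in the treatment of the second-neighborhood term, and it is fatal to the stated constant. The correct way to handle $-\sum_y a_yu_y\Delta f(y)+f(x)\sum_y a_y\Gamma(f)(y)/f_y$ is to combine the two sums \emph{pointwise in $z$}: for each $y\sim x$ and $z\sim y$ the combined summand (with weight $a_yw_{yz}/\mu(y)$) is the quadratic $\frac{f(x)}{2f(y)}(f(z)-f(y))^2-u_y(f(z)-f(y))$ in $f(z)$, whose minimum over $f(z)$ is $-\frac{f(y)}{2f(x)}u_y^2$; summing and using $\deg(y)/\mu(y)\le D_\mu$ and $f(y)/f(x)<D_w$ gives exactly the needed $-D_\mu D_w\,\Gamma(f)(x)$. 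Your route decouples the two sums. After your Cauchy--Schwarz bound $\bigl|\sum_y a_yu_y\Delta f(y)\bigr|\le 2\sqrt{D_\mu D_w\Gamma(f)(x)\cdot S}$ with $S=\sum_ya_y\Gamma(f)(y)$, the AM--GM split into $\lambda D_\mu D_w\Gamma(f)(x)+S/\lambda$ must have its residual absorbed into $f(x)\sum_ya_y\Gamma(f)(y)/f_y$, which is only guaranteed to be at least $S/D_w$ since $f(x)/f(y)$ can be as small as $1/D_w$; this forces $\lambda\ge D_w$ and inflates the coefficient of $\Gamma(f)(x)$ to $D_\mu D_w^2$. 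Equivalently, minimizing $-2\sqrt{D_\mu D_w\Gamma(f)(x)\,S}+S/D_w$ over $S$ gives $-D_\mu D_w^2\,\Gamma(f)(x)$, so your method proves only something like $CDE(2,-D_\mu(D_w^2/2+1))$. The lower bounds $\Gamma(f)(y)\ge\frac{w_{yx}}{2\mu(y)}u_y^2$, $\sum_y a_yf_y\le\sigma f(x)$, etc.\ cannot repair this: the problematic neighbors are those with $f_y$ large and $\Gamma(f)(y)$ large (driven by second neighbors $z$), and for the absorption you would need an \emph{upper} bound on their $\Gamma(f)(y)$, which does not exist in terms of $\Gamma(f)(x)$.

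A secondary issue: the auxiliary claim $\Gammatt(f)\ge-\frac{D_\mu D_w}{2}\Gamma(f)$ is strictly stronger than what termwise bounds deliver. Even with the sharp pointwise quadratic minimization one only gets $2\Gammatt(f)\ge(\Delta f)^2-\sigma\Gamma(f)+\frac{\Delta f\,\Gamma(f)}{f}-D_\mu D_w\Gamma(f)$, which after discarding $(\Delta f)^2\ge0$ is merely $\ge-(D_\mu D_w+2D_\mu)\Gamma(f)$; your claim requires recovering an extra $2D_\mu\Gamma(f)$ from the interplay of the terms (for instance, $\frac{\Delta f\,\Gamma(f)}{f}$ can only approach $-D_\mu\Gamma(f)$ when $(\Delta f)^2$ approaches $2D_\mu\Gamma(f)$), and you give no argument for this. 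The fix is simply not to discard $(\Delta f)^2$: keep it in your identity, where it already carries exactly the coefficient needed for dimension $n=2$, bound $-\sigma\Gamma(f)+\frac{\Delta f\,\Gamma(f)}{f}\ge-2D_\mu\Gamma(f)$ termwise, and treat the second-neighborhood term by the pointwise minimization above. That is the paper's proof.
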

\begin{proof}
Fix a function $f: V \to \R$ with $f > 0$, and vertex $x$ so that $\Delta f(x) < 0$.  We begin by calculating:
\begin{align}
\Gammatt(f)(x) =& \frac{1}{2} \left[ \Delta \Gamma(f) - 2 \Gamma\left(f, \frac{\Delta f^2}{2f} \right)\right] \nonumber \\
=& \frac{1}{2} \left[ \asum{y \sim x}(\Gamma(f)(y) - \Gamma(f)(x)) - \frac{1}{2} \asum{y \sim x} (f(y)-f(x)) \left( \frac{(\Delta f^2)(y)}{f(y)} - \frac{(\Delta f^2)(x)}{f(x)} \right)\right]\nonumber \\
=& 
\frac{1}{4} \asum{y \sim x} \asum{z \sim y} \left[ (f(z)-f(y))^2 - (f(y)-f(x))  \frac{(f^2(z)-f^2(y))}{f(y)} \right] \nonumber \\&\;\;\;\;\;\;\;\;\;\;- \frac{1}{2}\asum{y \sim x} \Gamma(f)(x) + \frac{1}{4}\asum{y \sim x} (f(y)-f(x))\frac{(\Delta f^2)(x)}{f(x)} \nonumber \\
&= \frac{1}{4} \asum{y \sim x}\asum{z \sim y} \left[ \frac{f(x)}{f(y)} f^2(z) - 2f(y)f(z) + 2f^2(y) - f(x)f(y) \right] \nonumber
\\&\;\;\;\;\;\;\;\;\;\;- \frac{1}{2}\asum{y \sim x} \Gamma(f)(x) + \frac{1}{2} \left( 
(\Delta f(x))^2 + \frac{\Gamma(f)}{f(x)} (\Delta f)\right),  \label{eq:startpt}
\end{align}
where in the second to last line we collected the terms at distance two, and in the
last line we used the identity that $(\Delta f^2)(x) = 2f(x)(\Delta f)(x) + 2\Gamma(f)(x)$.

The summands of the double sum are quadratics in $f(z)$. They are minimized when 
$f(z) = \frac{f^2(y)}{f(x)}$, whence the summand is $-\frac{f(y)}{f(x)}(f(x)-f(y))^2$, so
\begin{align}
\Gammatt(f) &\geq -\frac{1}{4} \asum{y \sim x} \asum{z \sim y} \frac{f(y)}{f(x)}(f(x)-f(y))^2 - \frac{1}{2}\asum{y \sim x} \Gamma(f)(x) + \frac{1}{2} \left( (\Delta f(x))^2 + \frac{\Gamma(f)}{f(x)} (\Delta f)\right)\nonumber\\
&\geq -\frac{1}{4}D_\mu \sum_{y \sim x} \frac{f(y)}{f(x)}(f(x)-f(y))^2 - \frac{1}{2}D_\mu \Gamma(f)(x)  + \frac{1}{2} \left( (\Delta f(x))^2 + \frac{\Gamma(f)}{f(x)} (\Delta f)\right). \label{eq:bd}
\end{align}
We use the fact that
\[
\Delta f = \asum{y \sim x}(f(y)-f(x)) \geq -\asum{y \sim x} f(x) \geq -D_{\mu}f(x), 
\]
to lower bound the $\frac{\Gamma(f)}{f(x)}(\Delta f)$ term.  Finally, we use the fact 
that $\Delta f < 0$, and Lemma \ref{prop:easy} (i) implies that 
\[
\frac{f(y)}{f(x)} < D_w. 
\]
Therefore, continuing from (\ref{eq:bd}), 
\begin{align*}
\Gammatt(f) &\geq  -\frac{1}{4}D_\mu \sum_{y \sim x} \frac{f(y)}{f(x)}(f(x)-f(y))^2 -
\frac{1}{2}D_\mu \Gamma(f)(x)  + \frac{1}{2} \left( (\Delta f(x))^2 + \frac{\Gamma(f)}{f(x)} (\Delta f)\right) \\
& > \frac{1}{2} (\Delta f(x))^2 - D_\mu \left(\frac{D_w}{2}+1\right) \Gamma(f) 
\end{align*}
as desired.

\end{proof}

\subsection{Sharpness of Theorem~\ref{thm:lb} on trees}
For unweighted graphs with the normalized Laplacian, Theorem \ref{thm:lb} 
states that all graphs satisfy $CDE(2,-\frac{D}{2} -1)$.  Such a lower bound on curvature is essentially tight in the case of trees.  
Indeed,
let $(T_D, x_0)$ denote the infinite $D$-ary tree rooted at $x_0$.  We find below functions $f_{D}$ for which
\begin{equation}
\frac{\Gammatt(f_D)}{\Gamma(f_D)} \leq -(1+o(1))\frac{D}{2},\label{eq:sharp} \mbox{ as } D\to\infty.
\end{equation}
To construct the function $f_D$ we do the following.  
Let $y_1, \dots, y_D$ denote the neighbors of $x_0$.  We define functions $f_\epsilon$ as follows:
\begin{align*}
f_{\epsilon}(x_0) &= 1\\
f_{\epsilon}(y_1) &= (1-\epsilon)D\\
f_{\epsilon}(y_i) &= \epsilon &&  \mbox{for $2 \leq i \leq D$.}
\end{align*}
For vertices $z \sim y_i$ at distance two from $x_0$, we take 
$f_\epsilon(z) = f^2(y_i)$ (and hence, by the computation in the proof of Theorem \ref{thm:lb} being the value that minimize $\Gammatt(f_\epsilon)$ given the $f_\epsilon(y_i)$).  
Then we take $f_{D} = f_\epsilon$ for $\epsilon = D^{-3/2}$. It is a straight forward
computation to verify that (\ref{eq:sharp}) holds.

\subsection{Ricci-flat graphs}

Chung and Yau \cite{ChY} introduced the notion of Ricci-flat (unweighted) graphs as a generalization of Abelian Cayley graphs.

\begin{definition} A $d$-regular graph $G(V,E)$ is Ricci-flat at the vertex $x\in V$ if there exists maps $\eta_i : V \to V; \; i = 1,\dots, d$ that satisfy the following conditions.
\begin{enumerate}
\item $x\eta_i(x) \in E$ for every $x\in V$.
\item $\eta_i(x) \neq \eta_j(x)$ if $i \neq j$, for every $x \in V$.
\item for every $i$ we have $\cup_j \eta_i (\eta_j(x)) = \cup_j \eta_j (\eta_i(x))$
\end{enumerate}  

In fact to test Ricci-flatness at $x$ it is sufficient for the $\eta_i$s to be defined only on $x$ and the vertices adjacent to $x$. 

Finally, the graph $G$ is Ricci-flat if it is Ricci-flat at every vertex. 
\end{definition}

Given a weighted graph which is Ricci-flat when viewed as an unweighted graph, the
weighting is called {\it consistent} if 
\begin{enumerate}
\item There exist numbers $w_1, \dots, w_d$ so that $w_{x\eta_i(x)} = w_i$ for all $i= 1, \dots, d$ and  $x \in V$.  
\item Whenever $\eta_j(\eta_i(x)) = \eta_i(\eta_k(x))$ for some $x\in V$ then $w_j = w_k$. 
\item The weights are symmetric, so $w_{xy}=w_{yx}$ whenever $x \sim y$.  
\end{enumerate}
If only the first two conditions holds (so the weights are not necessarily symmetric) then 
we say the weighting is {\it weakly consistent.}

\begin{remark}
The conditions on the weights are fairly restrictive, but there are two cases when they are easily seen to be satisfied. 
\begin{enumerate}
\item If $w_i = 1 : i = 1,\dots, d$ then we get back the original notion of Ricci-flat graph. 
\item If $G$ is Ricci flat, and the functions $\eta_i$ locally commute, that is $\eta_i(\eta_j(x)) = \eta_j(\eta_i(x))$, then any sequence $w_1,\dots, w_d$ can be used to introduce a weakly consistent weighting for $G$. 
\end{enumerate}
\end{remark}

The critical reason why we choose these restrictions is the following:  If $G$ is a (weakly) consistently weighted  Ricci-flat graph and $f:V \to \mathbb{R}$ is a function, then for
any vertex $x \in V$, and $1 \leq i \leq d$, 
\begin{equation}
\sum_{j} w_j f(\eta_i\eta_j(x)) = \sum_j w_j f(\eta_j\eta_i(x)). \label{reason}
\end{equation}
Here the fact $G$ is Ricci flat implies the sums are over the same set of vertices, and the second condition on the weights ensures that the sums are equal.  

\begin{theorem}\label{thm:ricciflat} Let $G$ be a $d$-regular Ricci-flat graph. 
Suppose that the measure $\mu$ defining $\Delta$ satisfies $\mu(x) \equiv \mu$ for all
vertices $x \in G$. 
\begin{enumerate}
\item   If the weighting of $G$ is consistent, then $G$ satisfies $CDE(d,0)$.  
\item If the weighting of $G$ is weakly consistent, then $G$ satisfies $CDE(\infty,0)$
\end{enumerate}
\end{theorem}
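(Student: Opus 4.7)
The plan is to compute $\Gammatt(f)(x)$ directly in terms of the values of $f$ at $x$ and its first- and second-neighbors, use the Ricci-flat structure to symmetrize the resulting double sum, and expose a manifestly non-negative sum of squares; the dimension constant $d$ in part~(1) will then be extracted by a Cauchy--Schwarz step that exploits the symmetry of the weights.

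Write $f_0 = f(x)$, $f_i = f(\eta_i(x))$, $f_{ij} = f(\eta_j(\eta_i(x)))$, and $\varphi_i = f_i - f_0$. Weak consistency together with the Ricci-flat condition furnishes a weight-preserving bijection on the second neighborhood of $x$, yielding the fundamental identity
\[
\sum_j w_j\, g(\eta_j(\eta_i(x))) \;=\; \sum_j w_j\, g(\eta_i(\eta_j(x))) \qquad \text{for each fixed } i \text{ and every function } g.
\]
First I would expand $4\mu^2\Gamma_2(f)(x) = 2\mu^2\Delta\Gamma(f)(x) - 4\mu^2\Gamma(f,\Delta f)(x)$ as a double sum over ordered pairs $(i,j)$. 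The identity above makes an antisymmetric cross-term of the form $\sum_{i,j} w_i w_j\,\varphi_i\bigl(f(\eta_i\eta_j x) - f(\eta_j\eta_i x)\bigr)$ vanish, after which completing squares produces the clean expression
\[
4\mu^2 \Gamma_2(f)(x) \;=\; \sum_{i,j} w_i w_j\,(f_{ij} - f_i - f_j + f_0)^2.
\]
In particular $\Gamma_2(f)(x) \ge 0$, i.e.\ $CD(\infty,0)$ already holds in the weakly consistent case.

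Next I would expand $4\mu^2\Gamma(f,\Gamma(f)/f)(x)$ in the same variables, using the expression $2\mu\Gamma(f)(\eta_i x) = \sum_j w_j(f_{ij}-f_i)^2$ that consistency of the weighting makes available. Combining with the formula above and completing squares in $f_{ij}$ around the value $f_i f_j/f_0$, the subtraction in $\Gammatt(f) = \Gamma_2(f) - \Gamma(f,\Gamma(f)/f)$ absorbs perfectly to yield the pleasant identity
\[
4\mu^2 \Gammatt(f)(x) \;=\; \sum_{i,j} w_i w_j\,\frac{f_0}{f_i}\!\left(\varphi_{ij} - \frac{f_i}{f_0}\varphi_j\right)^{\!2} \;=\; \sum_{i,j} w_i w_j\,\frac{(f_0 f_{ij} - f_i f_j)^2}{f_0 f_i}.
\]
The right hand side is a manifest sum of squares, so $\Gammatt(f)(x) \ge 0$ for every positive $f$. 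This proves part~(2) of the theorem.

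For part~(1), the plan is to apply Cauchy--Schwarz to the sum of squares above to extract $\tfrac{1}{d}(\Delta f(x))^2$ as a lower bound, using the full symmetry $w_{xy}=w_{yx}$. The symmetry provides a weight-preserving involution on ordered pairs $(i,j)\leftrightarrow (j,i)$ which allows one to rewrite the tensor $f_0 f_{ij} - f_i f_j$ in a symmetric form whose Cauchy--Schwarz pairing against the natural test tensor equals a constant multiple of $\mu\Delta f(x)$, with the corresponding normalizing constant evaluating to $d$.

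The main obstacle I expect is precisely this last step. In the classical proof of $CD(n,K)$ on Ricci-flat manifolds, the pointwise identity $\sum_i \mathrm{Hess}\,f(e_i,e_i) = \Delta f$ delivers $|\mathrm{Hess}\,f|^2 \ge \tfrac{1}{n}(\Delta f)^2$ immediately; but in the discrete setting the analogous trace $\sum_i w_i (f_{ii} - 2f_i + f_0)$ need not equal $\mu\Delta f(x)$, because the maps $\eta_i$ are not required to be involutions. Symmetric weights compensate for this defect by providing, after averaging over the pair-exchange involution, a substitute trace identity sufficient to carry the Cauchy--Schwarz through and recover the dimension constant $d$.
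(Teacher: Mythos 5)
Your treatment of part (2) is essentially the paper's: you expand $\Gammatt(f)(x)$ over ordered pairs $(i,j)$, use the swap identity $\sum_j w_j g(\eta_j\eta_i(x))=\sum_j w_j g(\eta_i\eta_j(x))$ to symmetrize, and arrive at the same sum of squares $\frac{1}{4\mu^2}\sum_{i,j}w_iw_j\,(f_0f_{ij}-f_if_j)^2/(f_0f_i)$ (the paper's (\ref{gttbound}), up to relabeling the index in the denominator). That part is fine.

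Part (1), however, has a genuine gap. Your plan — ``apply Cauchy--Schwarz to the sum of squares, pairing the tensor $f_0f_{ij}-f_if_j$ against a natural test tensor whose pairing equals a constant multiple of $\mu\Delta f(x)$'' — cannot work as stated. Any pairing $\sum_{i,j}w_iw_j a_{ij}(f_0f_{ij}-f_if_j)$ that is to match $\mu\Delta f(x)=\sum_i w_i(f_i-f_0)$ for all $f$ must put $a_{ij}=0$ off the ``return'' pairs $\eta_j\eta_i(x)=x$ (the other $f_{ij}$ are free second-neighbor values), and on the return pairs the expression $\sum_i w_iw_{j(i)}a_i(f_0^2-f_if_{j(i)})$ is quadratic in the $f_i$, so it is never identically a multiple of $\Delta f$; one only gets an inequality, and only under the hypothesis $\Delta f(x)<0$, which your sketch never invokes even though it is what distinguishes $CDE$ from $CD$. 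The paper's route is: discard everything except the return terms, obtaining $\sum_i w_iw_{j(i)}(f_0^2-f_if_{j(i)})^2/(f_0f_i)$; observe that the return map $i\mapsto j(i)$ is a permutation (generally \emph{not} an involution, contrary to your ``pair-exchange involution''), decompose it into cycles on which the weight is constant, and use the rearrangement inequality to replace the cyclic pairing by the involutive pairing $i\leftrightarrow\ell+1-i$ so that the asymmetric denominator $f_0f_i$ can be symmetrized; then apply $\tfrac12(1-st)^2(\tfrac1s+\tfrac1t)\ge (1-st)^2/\sqrt{st}\ge 4(1-\sqrt{st})^2$ to reach $\frac{1}{\mu^2}\sum_i\bigl(w_i(f_0-\sqrt{f_if_{i'}})\bigr)^2$; Cauchy--Schwarz over the $d$ indices then gives $\frac1d\bigl(\frac1\mu\sum_i w_i(f_0-\sqrt{f_if_{i'}})\bigr)^2$, and finally $\Delta f(x)<0$ together with $\sum_i w_i\sqrt{f_if_{i'}}\le\sum_i w_if_i$ yields $\frac1d(\Delta f)^2$. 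The cycle/rearrangement step and the final sign argument are the substantive content of part (1), and neither appears in your proposal.
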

\begin{remark}For a $d$-regular Ricci-flat graph and a weakly consistens weighting the two standard choices of the measure $\mu \equiv 1$ and $\mu(x)=\deg(x)$ satisfy $\mu(x)\equiv \mu$ for all $x\in V$.
\end{remark}
\begin{corollary}\label{corol:zd} The usual Cayley graph of $\mathbb{Z}^k$ satisfies $CDE(2k,0)$, for the regular or normalized graph Laplacian.  
\end{corollary}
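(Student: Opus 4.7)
The plan is to verify that the standard Cayley graph $G$ of $\mathbb{Z}^k$ with generating set $\{\pm e_1,\ldots,\pm e_k\}$ satisfies the hypotheses of Theorem~\ref{thm:ricciflat}(1) with $d=2k$, after which the corollary is immediate.

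First, I would exhibit an explicit Ricci-flat structure. Define the $2k$ maps $\eta_1,\ldots,\eta_{2k} : \mathbb{Z}^k \to \mathbb{Z}^k$ by $\eta_i(x) = x + e_i$ for $1\leq i\leq k$ and $\eta_{k+i}(x)=x-e_i$ for $1\leq i\leq k$, where $e_1,\ldots,e_k$ denotes the standard basis. Conditions (1) and (2) of the Ricci-flat definition are immediate: each $\eta_i(x)$ is a neighbor of $x$, and the $2k$ translates of $x$ are pairwise distinct. For condition (3), translations on $\mathbb{Z}^k$ commute, so in fact $\eta_i\circ\eta_j = \eta_j\circ\eta_i$ pointwise; the two unions in the condition therefore agree identically, not merely as sets.

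Next, I would address the weighting and measure. Since $G$ is unweighted, taking $w_i=1$ for every $i$ makes the weighting consistent in the sense of the previous subsection: the first condition (edge weight depends only on the generator) holds trivially with all $w_i=1$, the second condition (compatibility on relators) becomes vacuous since every $w_j=w_k$, and symmetry is automatic. For the Laplacian, both standard choices of measure are constant on $V$: the regular Laplacian corresponds to $\mu\equiv 1$, and the normalized Laplacian corresponds to $\mu(x)=\deg(x)=2k$. Both satisfy the hypothesis $\mu(x)\equiv \mu$ of Theorem~\ref{thm:ricciflat}.

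Applying part (1) of the theorem with $d=2k$ then yields $CDE(2k,0)$. There is no substantive obstacle to overcome: the entire content of the corollary is packaged inside Theorem~\ref{thm:ricciflat}, and the only step is a mechanical check that $\mathbb{Z}^k$ admits a Ricci-flat structure, which is essentially built into its abelian group structure via the commuting translations $\eta_i$.
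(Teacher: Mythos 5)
Your proposal is correct and follows exactly the route the paper intends: the corollary is stated without proof as an immediate consequence of Theorem~\ref{thm:ricciflat}(1), and your verification that the translation maps $\eta_i(x)=x\pm e_i$ give a Ricci-flat structure, that the all-ones weighting is consistent, and that both standard measures are constant is precisely the mechanical check required.
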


\begin{proof}[Proof of Theorem~\ref{thm:ricciflat}] Let $f : V \to \R$ be a function. 

We begin by assuming that $G$ is Ricci flat, and the weighting is weakly consistent.  

We will write $y$ for $f(x)$, $y_i$ for $f(\eta_i(x))$, and $y_{ij}$ for $f(\eta_j(\eta_i(x)))$. With this notation we have
\begin{align*}
\Delta \Gamma(f)(x) &= \frac{1}{\mu} \sum_{i} w_i \left( \Gamma(f)(\eta_i(x)) - \Gamma(f)(x) \right)\\  &= \frac{1}{2\mu^2} \sum_{i} \sum_{j} w_iw_j \left((y_{ij}-y_i)^2 - (y_j -  y)^2 \right) \\&= \frac{1}{2\mu^2}\sum_{i,j} w_iw_j((y_{ij}^2 - y_j^2) + (y_i^2 - y^2) - 2y_iy_{ij} + 2yy_j), 
\end{align*} and 

\begin{align*}
2\Gamma\left(f, \frac{\Delta f^2}{2f}\right) &= \frac{1}{2\mu^2}  
\sum_i \sum_j w_iw_j( y_i - y)\left( \frac{y_{ij}^2 - y_i^2}{y_i} - \frac{y_{j}^2 - y^2}{y} \right) \\ 
&= \frac{1}{2\mu^2}  
\sum_i \sum_j w_iw_j( y_i - y)\left( \frac{y_{ji}^2 - y_i^2}{y_i} - \frac{y_{j}^2 - y^2}{y} \right)\\
&= \frac{1}{2\mu^2}  
\sum_i \sum_j w_iw_j( y_j - y)\left( \frac{y_{ij}^2 - y_j^2}{y_i} - \frac{y_{i}^2 - y^2}{y} \right)\\
&= \frac{1}{2\mu^2} \sum_i\sum_j w_iw_j \left( (y_{ij}^2 - y_j^2) + (y_i^2-y^2) + 2yy_j 
- \frac{y^2y_{ij}^2 + y_i^2y_j^2}{yy_j} \right).
\end{align*}
Here, the second equality follows from the (weakly) consistent labeling as observed in
(\ref{reason}) and the third equality follows from changing the role of $i$ and $j$.  

Combining, we see
\begin{align}
\Gammatt(f) = \frac{1}{2} \left(\Delta \Gamma(x) - 2\Gamma\left(f,\frac{\Delta f^2}{2f}\right)\right) &= \frac{1}{4\mu^2} \sum_{ij} w_iw_j \left( \frac{y^2y_{ij}^2 - 2y_iy_jy_{ij}y + y_i^2y_j^2}{yy_j} \right) \nonumber\\&= \frac{1}{4\mu^2} \sum_{ij} w_iw_j \frac{(yy_{ij} - y_iy_j)^2}{yy_j}. \label{gttbound}
\end{align}

Clearly, $\Gammatt(f) \geq 0$, so $G$ satisfies $CDE(\infty,0)$ proving the first 
part of the assertion.    

Now we further assume that the weighting of $G$ is consistent.  (That is, we further
assume the weights are symmetric.)  Now for each $i$ there is a unique $j = j(i)$ such that $\eta_j(\eta_i(x)) =x$ and thus $y_{ij} = y$. Throwing away all the other terms from (\ref{gttbound}) we get:
\[
\Gammatt \geq \frac{1}{4\mu^2} \sum_{i}  w_iw_{j(i)} \frac{(y^2 - y_iy_{j(i)})^2}{yy_i} .
\]

Note that $j(i)$ is a full permutation, and the symmetry of weights implies that $w_{i} =
w_{j(i)}$, and hence on the cycles in $j(i)$ the weights are constant.  Suppose the 
permutation $j(i)$ decomposes into cycles $C_1, \dots, C_k$, with lengths 
$\ell_1, \dots, \ell_k$.  We focus our attention on an arbitrary cycle $C$.  Then there
exists a $w_C$, and the terms above corresponding to this cycle are of the form

\[
\frac{w_C^2}{4\mu^2} \sum_{i \in C}  \frac{(y^2 - y_iy_{j(i)})^2}{yy_i}  = 
\frac{w_C^2y^2}{4\mu^2} \sum_{i \in C} \frac{(1 - z_iz_{j(i)})^2 }{z_i} = 
\frac{w_C^2 y^2}{4\mu^2} \sum_{i \in C} \left(\frac{1}{z_i} - 2z_{j(i)} + z_iz_{j(i)}^2 \right),
\]

where we take $z_i = y_i/y$.  
We can assume without loss of generality that $j(i)$ restricted to this cycle $C$ is a
permutation on $[\ell]$, and  $0 < z_1 \leq z_2 \leq \dots \leq z_\ell$. We can apply the Rearrangement Inequality to obtain $\sum z_i z_{j(i)}^2 \geq \sum z_i z_{\ell+1-i}^2$ and hence 
\begin{align}
\frac{w_C^2y^2}{4\mu^2}\sum_{i \in C} \left(\frac{1}{z_i} - 2z_{\ell+1-i} + z_i z_{\ell+1-i}^2\right) &= \frac{w_C^2y^2}{8\mu^2} \sum_{i \in C} (1-z_i z_{\ell+1-i})^2 \left( \frac{1}{z_i}+ \frac{1}{z_{\ell+1-i}}\right) \nonumber \\&\geq \frac{w_C^2y^2}{4\mu^2} \sum_{i \in C} \frac{(1-z_i z_{\ell+1-i})^2}{\sqrt{z_i z_{\ell+1-i}}}  \nonumber \\
& \geq \frac{w_C^{2}y^2}{\mu^2} \sum_{i \in C} (1 - \sqrt{z_{i}z_{\ell + 1 - i}})^2 \nonumber\\
&= \frac{1}{\mu^2}  \sum_{i \in C} (w_C (y - \sqrt{y_iy_{\ell+1-i}}))^2.
\label{eq:rearr}
\end{align}
We now combine the cycles together and apply Cauchy-Schwarz, to see 
\begin{equation}
\Gammatt(f) \geq \frac{1}{d} \left( \frac{1}{\mu}\sum_{i} w_i (y - \sqrt{y_iy_{i'}}) \right)^2, \label{eq:step}
\end{equation}
where $y_{i'}$ is the partner of $y_i$ in its cycle as given in (\ref{eq:rearr}).

Finally, we assume that $\Delta f(x) < 0$ to prove $CDE$.  This implies that $\sum_i w_i y_i < \sum_i w_i y$.  Also from the fact that $y_i$ and $y_i'$ appear in the same cycle, we have $\sum_i w_i y_i' = \sum_i w_i y_i$.  Applying Cauchy-Scwharz we see that
\[
\sum_i w_i \sqrt{y_iy_{i'}} \leq \sqrt{ \left(\sum_i w_i y_i\right) \left(\sum_i w_i y_i'\right)} = \sum_{i} w_iy_i  
< \sum_i w_i y.
\]
Thus continuing $(\ref{eq:step})$, we see the interior square is positive, and hence 
\[
\Gammatt(f) \geq \frac{1}{d} \left( \frac{1}{\mu}\sum_{i} w_i (y - \sqrt{y_iy_{i'}}) \right)^2  
\geq \frac{1}{d} \left ( \frac{1}{\mu} \sum_i w_i (y - y_i) \right)^2 = \frac{1}{d} (\Delta f)^2  
\]
as desired. 
\end{proof}

\subsection{Strong cut-off function in $\Z^d$}

\begin{prop}\label{prop:zdstrong} The usual Cayley graph of $\Z^d$, along with a strongly consistent weighting, admits a $(100,R)$-strong cut-off function supported in a ball of radius $\sqrt{d}R$ centered at the origin.
\end{prop}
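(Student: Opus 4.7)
The plan is to build the cut-off as a pullback of a scalar profile under $|x|^2$, exploiting that on the standard Cayley graph of $\Z^d$ (with either $\mu\equiv 1$ or $\mu(x)=2d$) the function $f(x) := |x|^2$ satisfies the exact identities
\[
\Delta f(x) \equiv \frac{2d}{\mu},\qquad 2\Gamma(f)(x) = \frac{8|x|^2 + 2d}{\mu},
\]
so $f$ behaves like a discrete ``distance squared'' with no Laplacian comparison theorem needed. This is precisely the reason the proposition allows the support to be the Euclidean ball of radius $\sqrt{d}R$ rather than the graph-distance ball of radius $R$.

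Concretely I would take $\phi(x) = \psi\bigl(|x|^2/(dR^2)\bigr)$ where $\psi:\R\to[0,1]$ is a profile of the form $\psi(s) = (1-s)_+^p$ for a fixed integer $p$ (chosen after the estimates below). This satisfies $\phi(x_0)=1$ and is supported in $S := \{x : |x|^2\leq dR^2\}$. Since $\Z^d$ satisfies $CDE(2d,0)$ by Corollary~\ref{corol:zd} we have $K=0$, so Definition~\ref{def:strong_cutoff} reduces to requiring, at each $x\in S$, either $\phi(x) < 50/R^2$, or both
\[
\phi^2(x)\,\Delta\tfrac{1}{\phi}(x) < \tfrac{100\,D_\mu}{R^2}\quad\text{and}\quad \phi^3(x)\,\Gamma\!\bigl(\tfrac{1}{\phi}\bigr)(x) < \tfrac{100\,D_\mu}{R^2}.
\]
I split $S$ into the collar $S_1 = \{\phi<50/R^2\}$ (where clause~1 is automatic) and the bulk $S_2 = S\setminus S_1$ (where both inequalities must be checked).

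On $S_2$ one has $(1-s(x))^p\geq 50/R^2$, so $(1-s(x))$ is bounded below by a positive quantity. Since the neighbor increments $\Delta s_y := s(y)-s(x) = (\pm 2x_i\pm 1)/(dR^2)$ are of order $1/(\sqrt{d}R)$ uniformly on the support, this lower bound forces $\phi(y)/\phi(x)$ to lie in an interval $[\theta,\theta^{-1}]$ independent of the point, so in particular $\phi$ does not vanish at neighbors of $x$. From here the algebraic identities
\[
\phi^2\Delta\tfrac{1}{\phi}(x) = -\phi(x)\asum{y\sim x}\tfrac{\phi(y)-\phi(x)}{\phi(y)},\qquad 2\phi^3\Gamma\!\bigl(\tfrac{1}{\phi}\bigr)(x) = \phi(x)\asum{y\sim x}\tfrac{(\phi(y)-\phi(x))^2}{\phi^2(y)}
\]
reduce everything to upper bounds on $\Delta\phi(x)$ and $\Gamma(\phi)(x)$, which I would obtain by Taylor-expanding $\psi$ around $s(x)$, separating the linear and quadratic contributions in $\Delta s_y$, and substituting the exact identities $\sum_{y\sim x}\Delta s_y = 2/R^2$ and $\sum_{y\sim x}(\Delta s_y)^2 = (8|x|^2+2d)/(dR^2)^2 \leq 10/(dR^2)$ that follow from the formulas for $\Delta f$ and $\Gamma(f)$.

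The main obstacle is calibrating the profile so that the discretization remainders compete with $\phi$ itself on the inner boundary of $S_2$, where $\phi$ is comparable to the threshold $50/R^2$. A linear profile ($p=1$) fails there because $\phi(y)-\phi(x)$ does not vanish as $s\to 1$, so $\Gamma(\phi)/\phi$ grows; conversely, too large a $p$ shrinks $S_2$ needlessly and makes the constants worse. A modest value such as $p=2$ or $p=3$ strikes the balance: the leading-order contribution to $\Gamma(\phi)$ scales like $\phi^{2-2/p}/(dR^2)$, while the subleading remainder contributes $O(1/(d^2R^4\phi^{2/p}))$, which is harmless exactly because of the bulk threshold $\phi\geq 50/R^2$. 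After combining, both inequalities reduce to $O(D_\mu/R^2)$ with the explicit absolute constant $100$ required by the proposition.
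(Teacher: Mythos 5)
Your overall strategy --- pulling the cut-off back through $|x|^2$, exploiting the exact identities for $\Delta |x|^2$ and $\Gamma(|x|^2)$ on $\Z^d$, and splitting the support into a collar where clause~1 of Definition~\ref{def:strong_cutoff} holds and a bulk where the two quotient inequalities must be checked --- is exactly the paper's: the paper takes $\phi(x)=\bigl(\max\{0,(R^2-|x|^2)/R^2\}\bigr)^2$, i.e.\ your profile with $p=2$, and carries out the resulting algebra directly. Your diagnosis that $p=1$ fails near the inner boundary (because there $\Gamma(\phi)/\phi$ is of order $1/d$ rather than $1/R^2$) while $p=2$ suffices is also correct, and matches why the paper squares its linear profile.

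There is, however, one concrete error. Your normalization $s(x)=|x|^2/(dR^2)$ puts the support at $\{|x|^2\le dR^2\}$, the \emph{Euclidean} ball of radius $\sqrt{d}R$; in graph ($\ell^1$) distance this is only contained in a ball of radius $dR$, whereas the proposition --- and its role in Theorem~\ref{thm:strongballgre} and the heat-kernel applications, where the size of the support governs the strength of the resulting estimate --- requires support in a graph ball of radius $\sqrt{d}R$. The correct normalization is $s(x)=|x|^2/R^2$: the support is then the Euclidean ball of radius $R$, which does lie in the graph ball of radius $\sqrt{d}R$. With that change your increments become $|s(y)-s(x)|=(2|x_i|\pm 1)/R^2\le 3/R$ while on the bulk $1-s(x)\ge \sqrt{50}/R$, so the ratio $\phi(y)/\phi(x)$ is still uniformly controlled and the rest of your outline goes through (this is precisely the paper's bound $\tfrac{R^2-|x|^2}{R^2-|x|^2\pm 2|x_i|+1}\le \tfrac{10}{7}$). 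A smaller point: with a strongly consistent but non-unit weighting the identities should read $\Delta|x|^2=2\deg(x)/\mu$ and the second-moment sum should be the weighted average $\asum{y\sim x}(s(y)-s(x))^2$; the cancellation of the linear terms survives exactly because $w_{xe_i(x)}=w_{xe_i^{-1}(x)}$, so nothing structural changes.
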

\begin{remark}
In the case of the Cayley graph of $\Z^d$, a strongly consistent weighting just means that for each of the $d$ generators $e_i$, $w_{xe_i(x)} = w_{xe_{i}^{-1}(x)}$.  
\end{remark}
We did not attempt to optimize the constant 100 appearing in this statement.

\begin{proof}
For a vertex  $x \in \Z^d$ let $x_i \in \Z$ denote its $i$th coordinate and write $|x|^2 = \sum_i x_i^2$. We are going to prove that the function
\[ \phi(x) = \left(\max\left\{ 0,  \frac{R^2 - |x|^2}{R^2} \right\}\right)^2 \]
is a $(100,R)$-strong cut-off function centered at the origin. It is supported in a ``Euclidean'' ball of radius $R$ which is contained in a ball of radius $\sqrt{d}R$ measured in the graph distance.

We need to show that one of the two cases in Definition~\ref{def:strong_cutoff} are satisfies. If $R^2 - |x|^2 \leq 10R$ then 
the first case is clearly satisfied, so we may assume $R^2 - |x|^2 > 10R$. Also, $|x_i| < R$ for any $i$, otherwise $\phi(x)$ would be 0. These together imply that 
\begin{align} 
\frac{R^2 - |x|^2}{R^2 - |x|^2 \pm 2|x_i|+1} &\leq \frac{1}{1 - \frac{2|x_i|-1}{R^2-|x|^2}} \leq \frac{1}{1 - \frac{3R}{10R}} \leq \frac{10}{7}. 
\label{eq:bounding}
\end{align}
By the consistency, for each coordinate there is a single weight $w_i$.  
Now we can compute
\begin{multline*} \mu(x)\phi^2(x) \Delta \frac{1}{\phi}(x) = \left(\frac{R^2-|x|^2}{R^2}\right)^4 \frac{R^4}{2}\cdot \\ \cdot \sum_{i} w_i\left(\frac{1}{(R^2-|x|^2-2|x_i|-1)^2} + \frac{1}{(R^2-|x|^2+2|x_i|-1)^2} - \frac{2}{(R^2-|x|^2)^2}\right)  \\=\left(\frac{R^2-|x|^2}{R^2}\right)^2\cdot\\
\cdot \sum_{i} w_i \left(\frac{(R^2-|x|^2)^2((R^2-|x|^2-1)^2+4x_i^2) - ((R^2-|x|^2-1)^2 - 4x_i^2)^2}{(R^2-|x|^2-2|x_i|-1)^2(R^2-|x|^2+2|x_i|-1)^2}\right) \\ \leq
\frac{1}{R^4} \sum_{i} w_i \left( \frac{12x_i^2(R^2-|x|^2)^4 + 2(R^2-|x|^2)^5}{(R^2-|x|^2-2|x_i|-1)^2(R^2-|x|^2+2|x_i|-1)^2} \right) .
\end{multline*}
In the last line, we used that $(R^2-|x|^2 - 1) \leq (R^2-|x|^2)$ and discarded some
negative terms.  Then using (\ref{eq:bounding}) along with $x_i^2 < R^2$ and $R^2 - |x|^2 < R^2$, we have 
\[
\phi^2(x) \Delta \frac{1}{\phi}(x) \leq \frac{1}{\mu(x)}\sum_i w_i \left(2 \cdot \frac{(10/7)^4}{R^2} \right) < \frac{100}{R^2}D_\mu. 
\]

A computation similar in spirit, but less complicated, shows that
\begin{multline*}\phi^3(x)\Gamma\left(\frac{1}{\phi}\right)(x) =\\ \frac{(R^2-|x|^2)^6}{2R^{12}\mu(x)} \sum_{i} w_i \left|\frac{R^4}{(R^2-|x|^2)^2} - \frac{R^4}{(R^2-|x|^2\pm 2|x_i| -1)^2}\right|^2 \leq \frac{100}{R^2}D_{\mu},
\end{multline*}
 and thus $\phi$ indeed is a $(100,R)$-strong cut-off function.
\end{proof}

\section{Applications} \label{app}

\subsection{Heat Kernel Estimates and Volume Growth}

One of the fundamental applications of the Li-Yau inequality, and more generally
parabolic Harnack inequalities, is the derivation of heat kernel estimates.  As alluded
to in the introduction, Grigor'yan and Saloff-Coste (in the manifold setting) and Delmotte (in the graph setting) proved the equivalence of several conditions (including Harnack inequalities, and the combination of volume doubling and the Poincar\'e inequality) to the heat kernel satisfying the following Gaussian type bounds.  Let $P_{t}(x,y)$ denote the fundamental solution to the heat equation 
starting at $x$.  

\begin{definition} $G$ satisfies the Gaussian heat-kernel property $\mathcal{G}(c,C)$ if $d(x,y)\leq t$ implies
\[
\frac{c}{\vol(B(x,\sqrt{t}))} \exp\left(-C \frac{d(x,y)^2}{t} \right) \leq P_t(x,y) 
\leq \frac{C}{\vol(B(x,\sqrt{t}))} \exp\left(-c\frac{d(x,y)^2}{t}\right).
\]
\end{definition}
In the graph setting, Delmotte proved that $\mathcal{G}(c,C)$ is equivalent to two
other (sets of) properties.  The first is the pair of volume doubling and Poincar\'e. 

\begin{definition} $G$ satisfies the volume doubling property $\mathcal{VD}(C)$ if 
for all $x \in V$ and all $r \in \mathbb{R}^+$:
\[
\vol(B(x,2r)) \leq C \vol(B(x,r))
\]
\end{definition}

\begin{definition} $G$ satisfies the Poincar\'e inequality $\mathcal{P}(C)$ if
\[
\sum_{x \in B(x_0,r)} \mu(x) (f(x) - f_B)^2 \leq C r^2 \sum_{x,y \in B(x_0,2r)} w_{xy}(f(y)-f(x))^2,
\]
for all $f: V \to \mathbb{R}$, for all $x_0 \in V$ and for all $r \in \R^+$, where
\[
f_B = \frac{1}{\vol(B(x_0,r))} \sum_{x \in B(x_0,r)} \mu(x)f(x).
\]
\end{definition}

The final equivalent condition is a Harnack inequality in the following form:

\begin{definition}  Fix $0 < \theta_1 
< \theta_2 < \theta_3 < \theta_4$ and $C > 0$.  $G$ satisfies the Harnack inequality property $\mathcal{H}(\theta_1,\theta_2,\theta_3,\theta_4,C)$ if for all $x_0 
\in V$ and $t_0, R \in \mathbb{R}^{+}$, and every positive solution $u(x,t)$ to the heat equation on $Q = B(x_0,2R) \times [s, s+\theta_4 R^2]$,
\[
\sup_{Q^-} u(x,t) \leq C \inf_{Q^+} u(x,t),
\]
where $Q^- = B(x_0,R) \times [s+\theta_1R^2, s+\theta_2R^2]$, and $Q^+ = B(x_0,R) \times [s+\theta_3 R^2,s+\theta_4R^2]$.  
\end{definition}

Delmotte shows that $\mathcal{H}(\theta_1,\theta_2,\theta_3,\theta_4,C_0) \Leftrightarrow 
\mathcal{P}(C_1) + \mathcal{VD}(C_2) \Leftrightarrow \mathcal{G}(c_3,C_4)$ for graphs,
the equivalent statement for manifolds is due to Grigor'yan and Saloff-Coste.  In 
the manifold case, it is well known that non-negative curvature implies $\mathcal{VD}$ and $\mathcal{P}$, but on graphs it is not known. Here, we show 
that $CDE(n,0)$ implies $\mathcal{H}$ (and hence all the properties) under the assumption that $G$ admits a $(c,\eta R)$ strong cutoff function contained in a ball $B(x_0,R)$ around every point.  For instance, the strong cutoff function for the
integer lattice $\Z^d$ shows we can guarantee a $(c, \frac{1}{\sqrt{d}}R)$ strong cutoff 
function in balls of radius $R$.  

\begin{corollary}[Corollary of Theorem \ref{thm:harnack}] Suppose $G$ satisfies $CDE(n,0)$, and let $\eta \in (0,1)$.  
If for every $x\in B(x_0, R)$ $G$ admits a $(c,\eta R)$-strong  cutoff function centered at $x$ with support in $B(x_0, 2R)$ then $G$ satisfies
$\mathcal{H}(\theta_1,\theta_2,\theta_3,\theta_4,C_0)$ for some $C_0$ (and therefore
$\mathcal{G}(c,C)$, $\mathcal{P}(C)$ and $\mathcal{VD}(C)$ for appropriate constants).
\end{corollary}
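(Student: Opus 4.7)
The plan is to combine Theorem~\ref{thm:strongballgre} (a pointwise gradient estimate at the centre of any strong cutoff) with Theorem~\ref{thm:harnack} (the space-time integration that turns such an estimate into a parabolic Harnack inequality), and then invoke Delmotte's equivalence, recalled just above, to deduce $\mathcal{VD}$, $\mathcal{P}$, and $\mathcal{G}$ from $\mathcal{H}$. Fix $x_0 \in V$, $R > 0$, $s \in \mathbb{R}$, and let $u$ be a positive solution of the heat equation on the cylinder $Q = B(x_0, 2R) \times [s, s + \theta_4 R^2]$. For each $x \in B(x_0, R)$ the hypothesis supplies a $(c, \eta R)$-strong cutoff centred at $x$ whose support lies in $B(x_0, 2R)$, and on this support $u$ is a positive solution. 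Applying Theorem~\ref{thm:strongballgre} with $K = 0$, $q \equiv 0$ (so that $\vartheta = 0$, $\Gamma(q) \equiv 0$ and the $C(\alpha, n, K, \vartheta, \eta, \epsilon)$ term vanishes), cutoff radius $\eta R$, and, say, $\alpha = 1/2$, then gives at the centre $x$
\begin{equation*}
\tfrac{1}{2}\,\frac{\Gamma(\sqrt{u})}{u}(x,t) \;-\; \frac{\partial_t \sqrt{u}}{\sqrt{u}}(x,t) \;\leq\; \frac{n}{t-s} \;+\; \frac{C_1}{R^2}
\end{equation*}
for all $x \in B(x_0, R)$ and $t \in (s, s + \theta_4 R^2]$, where $C_1 = C_1(n, c, \eta, D_\mu, D_w)$ is finite and depends only on the indicated data.

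Next, after translating time so that $s = 0$, plug this gradient bound into Theorem~\ref{thm:harnack} with $f = \sqrt{u}$, $\alpha = 1/2$, $q \equiv 0$, $c_1 = n$, and $c_2 = C_1/R^2$. Pick arbitrary $(x, T_1) \in Q^-$ and $(y, T_2) \in Q^+$; then $T_1 \in [\theta_1 R^2, \theta_2 R^2]$, $T_2 \in [\theta_3 R^2, \theta_4 R^2]$, and $d(x, y) \leq 2R$, a shortest-or-shorter path being obtained by routing through $x_0$, which keeps the path in $B(x_0, R)$ where the gradient bound holds. Since $q \equiv 0$, the Agmon-type distance collapses to the simple form noted after Theorem~\ref{thm:harnack}, yielding
\begin{equation*}
\varrho \;\leq\; \frac{4\mu_{\max}\, d(x,y)^2}{(T_2-T_1)\, w_{\min}} \;\leq\; \frac{16\,\mu_{\max}}{w_{\min}(\theta_3 - \theta_2)},
\end{equation*}
while $c_2(T_2 - T_1) \leq C_1 \theta_4$ and $(T_2/T_1)^{c_1} \leq (\theta_4/\theta_1)^n$; all three bounds are independent of $R$ and of the initial time $s$. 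Hence Theorem~\ref{thm:harnack} produces a constant $A = A(n, c, \eta, D_\mu, D_w, \mu_{\max}, w_{\min}, \theta_1, \theta_2, \theta_3, \theta_4)$ with $\sqrt{u}(x, T_1) \leq A\, \sqrt{u}(y, T_2)$. Squaring and taking the supremum over $Q^-$ and the infimum over $Q^+$ establishes $\mathcal{H}(\theta_1,\theta_2,\theta_3,\theta_4, A^2)$.

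Finally, by Delmotte's equivalence theorem, $\mathcal{H}$ is equivalent to the conjunction $\mathcal{VD}(C) + \mathcal{P}(C)$ and to the two-sided Gaussian bound $\mathcal{G}(c, C)$ for appropriate constants, so all remaining assertions follow at once. The only real obstacle is the bookkeeping: one must verify that every error contribution in Theorem~\ref{thm:strongballgre} is genuinely $O(R^{-2})$ when $K = q = 0$, so that after being multiplied by a time span of length at most $\theta_4 R^2$ inside Theorem~\ref{thm:harnack} it contributes only $O(1)$ to the Harnack exponent; once this $R$-scaling is checked, the independence of $A$ from $R$ and $s$ is manifest and the argument is complete.
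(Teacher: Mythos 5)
Your proposal is correct and follows essentially the same route as the paper: apply Theorem~\ref{thm:strongballgre} (with $K=0$, $q\equiv 0$) at each centre in $B(x_0,R)$ to get a gradient estimate of the form $c_1/t + c_2/R^2$, feed it into Theorem~\ref{thm:harnack}, and check that the ratio $T_2/T_1$, the term $c_2(T_2-T_1)/R^2$, and the distance term $d(x,y)^2/(T_2-T_1)$ are all bounded by constants independent of $R$ and $s$, then invoke Delmotte's equivalence. Your write-up is in fact somewhat more careful than the paper's (explicit path through $x_0$ to keep $d(x,y)\le 2R$ inside the ball, explicit squaring to pass from $\sqrt{u}$ to $u$), but the argument is the same.
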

\begin{proof}
The proof is almost immediate from Theorem \ref{thm:harnack}.  Fix $\theta_1 < \theta_2 < \theta_3 < \theta_4$.  From Theorem 
\ref{thm:strongballgre} $G$ satisfies a gradient estimate of the form
\[
2(1-\alpha)\frac{\Gamma(\sqrt{u})}{u} - \frac{\Delta u}{u} \leq \frac{c_1}{t} + \frac{c_2}{R^2}
\]
on $B(x_0,R)$.  For $T_1 \in [s+\theta_1R^2, s+\theta_2R^2]$ and $T_2 \in 
[s + \theta_3R^2, s+\theta_4R^2]$, 
\[
\frac{T_2}{T_1} \leq \frac{s+\theta_4 R^2}{s+\theta_1R^2} \leq 1 + \frac{(\theta_4 - \theta_1)R^2}{s + \theta_4R^2} \leq 1 + \frac{\theta_4-\theta_1}{\theta_4}.
\]

Furthermore 
\[
\frac{c_2}{R^2} \cdot (T_2 - T_1) \leq c_2(\theta_4 - \theta_1)
\]
and
\[
\frac{d(x,y)^2}{T_2-T_1} \leq \frac{4}{\theta_3 - \theta_2}.
\]
Thus each of the terms arising in the Harnack inequality derived in Theorem \ref{thm:harnack} are bounded by constants not depending on $s$, $x_0$ and $R$, so we
can choose a $C_0$ guaranteeing that $\mathcal{H}(\theta_1,\theta_2,\theta_3,\theta_4,C_0)$ holds. 

\end{proof}

In general, however, we only have for graphs satisfying $CDE(n,0)$ the gradient 
estimate derived from Theorem \ref{thm:weakballgre}.  Using this gradient estimate
in Theorem \ref{thm:harnack} implies that
\[
u(x,T_1) \leq u(y,T_2) \cdot \left(\frac{T_2}{T_1}\right)^{c_1} \exp\left( \frac{c_2}{R}(T_2-T_1) + c_3\frac{d(x,y)^2}{T_2-T_1}  \right).
\]
This will not suffice for proving $\mathcal{H}(\theta_1,\theta_2,\theta_3,\theta_4,C_0)$.  Indeed, if $T_2 - T_1 = cR^{2}$, then this only implies that 
\[\sup_{Q^-} u(x,t) \leq \exp(cR+c') \inf_{Q^+} u(x,t),\] where the constant depends now on $R$. 

Nevertheless, we can derive heat kernel upper bounds that are Gaussian, and lower bounds that are not quite Gaussian but still have a similar form. The heat kernel bound then allows us to derive volume growth bounds:
we show that if $G$ satisfies $CDE(n,0)$ then $G$ has polynomial volume growth.  We derive here only on-diagonal upper and lower bounds, but it is known that off-diagonal 
bounds can be established using the on-diagonal bounds.  

\begin{theorem}
\label{thm:gaussian-bounds}
Suppose $G$ satisfies $CDE(n,0)$ and has maximum degree $D$.  Then there exist constants so that, for $t > 1$, 
\[
C \frac{1}{t^{n}} \exp\left(-C'\frac{d^2(x,y)}{t-1}\right)  \leq P_t(x,y) \leq C'' \frac{\mu(y)}{\vol(B(x,\sqrt{t}))}. 
\] \label{simplebounds}
\end{theorem}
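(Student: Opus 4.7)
The plan is to deduce both bounds from a single global parabolic Harnack inequality together with conservation of heat. Since $G$ has bounded degree and satisfies $CDE(n,0)$, the corollary following Theorem~\ref{thm:weakballgre} gives the Li-Yau estimate $\Gamma(\sqrt{u})/u-\partial_t\sqrt{u}/\sqrt{u}\leq n/(2t)$ globally on $G$, and feeding this into Theorem~\ref{thm:harnack} with $R=\infty$, $q\equiv 0$ and $\alpha=0$ yields
\[
u(z_1,T_1)\leq u(z_2,T_2)\left(\frac{T_2}{T_1}\right)^{n}\exp\left(\frac{Cd(z_1,z_2)^2}{T_2-T_1}\right)
\]
for every positive solution $u$ of the heat equation on $G$ and all $0<T_1<T_2$, with $C$ depending only on the graph data. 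The exponent $n$ (rather than $n/2$) arises from squaring the estimate obtained for $\sqrt{u}$, as remarked after Theorem~\ref{thm:harnack}.

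For the lower bound, I would apply this Harnack inequality to the positive heat solution $u(z,\tau)=P_\tau(z,y)$, with $T_1=1$, $T_2=t$, $z_1=y$ and $z_2=x$, to obtain
\[
P_1(y,y)\leq P_t(x,y)\,t^{n}\exp\left(\frac{Cd(x,y)^2}{t-1}\right).
\]
It then remains to bound $P_1(y,y)$ from below uniformly in $y$. This follows from a standard estimate for the continuous-time random walk: on a graph of maximum degree $D$ the walker stays at $y$ throughout $[0,1]$ with probability at least $e^{-D_\mu}$, whence $P_1(y,y)\mu(y)\geq e^{-D_\mu}$. Since $\mu$ is bounded on a bounded-degree graph we conclude $P_1(y,y)\geq c>0$, yielding the desired lower bound for $P_t(x,y)$.

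For the upper bound, I would again apply the Harnack inequality to $u(z,\tau)=P_\tau(z,y)$, but now with $T_1=t$, $T_2=2t$ and $z_2$ ranging over the ball $B(x,\sqrt{t})$. For such $z_2$ we have $d(x,z_2)^2/(T_2-T_1)\leq 1$, so $P_t(x,y)\leq C'\,P_{2t}(z_2,y)$ for a constant $C'=2^n e^C$. Averaging this inequality against $\mu(z_2)$ over the ball and invoking the conservation identity $\sum_{z}P_\tau(z,y)\mu(z)=\mu(y)$, which follows from symmetry of the kernel together with $\sum_{z}P_\tau(y,z)\mu(z)=1$, gives
\[
\mu(y)\geq \sum_{z_2\in B(x,\sqrt{t})}P_{2t}(z_2,y)\mu(z_2)\geq \frac{P_t(x,y)\,\vol(B(x,\sqrt{t}))}{C'},
\]
and rearrangement produces $P_t(x,y)\leq C''\mu(y)/\vol(B(x,\sqrt{t}))$. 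The main technical point requiring care is precisely this conservation identity: summing $P_\tau(\cdot,y)$ against $\mu$ in the first coordinate produces $\mu(y)$ rather than $1$, which is the source of the $\mu(y)$ factor appearing on the right-hand side of the upper bound.
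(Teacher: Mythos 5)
Your proposal is correct and follows essentially the same route as the paper: both derive the global Harnack inequality from the gradient estimate via Theorem~\ref{thm:harnack}, obtain the lower bound by comparing $P_t(x,y)$ with $P_1(y,y)$ and bounding the latter below by the stay-put probability of the walk, and obtain the upper bound by comparing $P_t(x,y)$ with $P_{2t}(z,y)$ for $z\in B(x,\sqrt{t})$, averaging over the ball, and using symmetry of $\mu(z)P_\tau(z,y)$ together with conservation of mass. Your treatment is if anything slightly more explicit than the paper's about the lower bound on $P_1(y,y)$ and the role of the kernel's symmetry.
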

\begin{proof}
The upper bound is standard and follows from the methods of Delmotte from \cite{Delmotte}.  
Indeed, the only observation is that the only time a Harnack inequality is
utilized in the proof of the upper bound, it is used on a solution to the heat equation
which is not just in the ball, but everywhere.  For such a function, letting $R \to \infty$ we observe that if $u$ is a solution on the whole graph, with $c_1 = n$, then
\begin{equation}
u(x,T_1) \leq u(y,T_2) \left(\frac{T_2}{T_1}\right)^n \exp\left(\frac{d(x,y)^2D}{(1-\alpha)(T_2-T_1)}\right).  \label{eq:harnform}
\end{equation}
Then the argument proceeds as follows.  Let $P_\cdot(\cdot, y)$ be the fundamental solution to the heat equation.  Then by (\ref{eq:harnform}), for $u=P_t$ if $z \in B(x,\sqrt{t})$,
\[
P_t(x,y) \leq P_{2t}(z,y) 2^{n} \exp\left(\frac{D}{1-\alpha} \right) = C' \cdot P_{2t}(z,y)
\]
Thus
\begin{align*}
P_t(x,y) &\leq \frac{C}{\vol(B(x,\sqrt{t}))} \sum_{z \in B(x,\sqrt{t})} \mu(z) P_{2t}(z,y).   \\
& \leq \frac{C}{\vol(B(x,\sqrt{t}))} \sum_{z \in B(x,\sqrt{t})} \mu(y) P_{2t}(y,z). \\
& \leq \frac{C' \mu(y)}{\vol(B(x,\sqrt{t}))}.  
\end{align*}
This gives the desired upper bound.  

The lower bound proceeds directly from the Harnack inequality (\ref{eq:harnform}).

Indeed,
\[
P_1(y,y) \leq P_{t}(x,y) t^{n} \exp\left(C'd(x,y)^2/(t-1)\right).
\]
Noting that $P_1(y,y)$ is bounded from below by an absolute constant in a bounded degree
graph and dividing yields the result.  
\end{proof}
An immediate corollary of Theorem \ref{thm:gaussian-bounds} is polynomial volume growth. 
\begin{corollary}
\label{cor:polynomial-growth}
Let $G$ be a graph satisfying $CDE(n,0)$. Then $G$ has a
polynomial volume growth.
\end{corollary}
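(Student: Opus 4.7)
The plan is to extract the volume growth bound directly from the on-diagonal heat kernel estimates of Theorem~\ref{thm:gaussian-bounds}. Specializing the two-sided bound to the diagonal $x=y$, where $d(x,y)=0$, the exponential factor in the lower bound drops out and we obtain, for all $t>1$,
\[
\frac{C}{t^{n}} \;\leq\; P_t(x,x) \;\leq\; \frac{C''\,\mu(x)}{\vol(B(x,\sqrt{t}))}.
\]
Comparing the two ends immediately yields
\[
\vol(B(x,\sqrt{t})) \;\leq\; \frac{C''\,\mu(x)}{C}\, t^{n}.
\]

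The second step is to convert this into a volume growth statement in the graph metric. Setting $r=\sqrt{t}$, we conclude that for every $x\in V$ and every $r>1$,
\[
\vol(B(x,r)) \;\leq\; C_1\,\mu(x)\, r^{2n},
\]
where $C_1 = C''/C$. Since we are in the bounded degree setting that underlies Theorem~\ref{thm:gaussian-bounds}, $\mu(x)$ is uniformly bounded (either $\mu\equiv 1$ or $\mu(x)=\deg(x)\le D$), so the right-hand side is bounded by a polynomial in $r$ independent of $x$. For $r\le 1$ the bound is trivial by bounded degree. This gives polynomial volume growth of degree at most $2n$.

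The only thing to watch out for is a clean justification that all the ingredients of Theorem~\ref{thm:gaussian-bounds} are legitimately available, in particular that the lower bound on $P_1(y,y)$ used in its proof is uniform in $y$ under the bounded degree hypothesis; this is standard and does not require any new curvature input. With that in place, the corollary is a one-line consequence of the diagonal comparison, so there is no real obstacle beyond applying Theorem~\ref{thm:gaussian-bounds}. The deeper content, namely that $CDE(n,0)$ yields Gaussian-type heat kernel control, has already been done in the theorem.
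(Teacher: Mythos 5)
Your proof is correct and is essentially identical to the paper's: both specialize Theorem~\ref{thm:gaussian-bounds} to the diagonal $y=x$, compare the lower bound $C/t^n$ with the upper bound $C''\mu(x)/\vol(B(x,\sqrt{t}))$, and cross-multiply to get $\vol(B(x,r))\leq C_1\mu(x)r^{2n}$. The additional remarks about bounded degree and uniformity are fine but not needed beyond what the theorem already provides.
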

\begin{proof}
Applying Theorem~\ref{thm:gaussian-bounds} with $y=x$ gives
\[
\frac{C}{t^n} \leq \frac{C' \mu(x)}{\vol(B(x,\sqrt{t}))},
\]
and cross multiplying yields the desired bounds.  
\end{proof}

\subsection{Buser's inequality for graphs} 

As another application of the gradient estimate in Theorem \ref{thm:compactgrecurv} we prove a Buser-type~\cite{Buser} estimate for the smallest nontrivial eigenvalue of a finite graph. For now on we assume that the edge weights are symmetric, i.e. $w_{xy}=w_{yx}$ for all $x\sim y$. 

In the following we denote 
$$\|f\|_p = \left(\sum_{x\in V}\mu(x)f^p(x)\right)^{\frac{1}{p}} \text{ and } \|f\|_\infty = \sup_{x\in V}|f(x)|.$$
The Cheeger constant $h$ of a graph is defined as
\[h = \inf_{\emptyset\neq U\subset V: \mathrm{vol}(U)\leq 1/2\, \mathrm{vol}(V)}\frac{|\partial U|}{\mathrm{vol}(U)},\]
where $|\partial U| = \sum_{x\in U, y\in V\setminus U}w_{xy}$ and $\mathrm{vol}(U) = \sum_{x\in U}\mu(x)$.
\begin{theorem}\label{thm:buser}
Let $G$ be a finite graph satisfying $CDE(n,-K)$ for some $K > 0$ and  fix $0 < \alpha < 1$. Then
\[\lambda_1 \leq \max\{2C \sqrt{K}h, 4C^2 h^2\},\] where the constant
\[C=8 \left(3\mu_{\mathrm{max}}\frac{(2-\alpha)n}{\alpha(1-\alpha)^2}\right)^{\frac{1}{2}}\]
 only depends on the dimension $n$ and $\mu_{\mathrm{max}}$.
\end{theorem}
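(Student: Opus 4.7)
My plan is to mimic Ledoux's semigroup proof of Buser's inequality, substituting the Li-Yau estimate of Theorem~\ref{thm:compactgrecurv} for the Bakry--Emery gradient bound. The strategy is to compare an upper bound on the heat leakage out of a near-Cheeger set (derived from the integrated Li-Yau inequality) against a spectral lower bound for the same quantity, and then optimize in the free time parameter.

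First pick $S \subset V$ with $\vol(S) \leq \tfrac{1}{2}\vol(V)$ and $|\partial S|/\vol(S)$ essentially equal to $h$, and let $u_s := P_s(\mathbf{1}_S + \varepsilon)$ for $\varepsilon > 0$ (to ensure the strict positivity required by Theorem~\ref{thm:compactgrecurv}, with $\varepsilon$ sent to zero at the end). Multiplying the estimate
\[
(1-\alpha)\frac{\Gamma(\sqrt{u_s})}{u_s}-\frac{\partial_s\sqrt{u_s}}{\sqrt{u_s}}\leq \frac{n}{2(1-\alpha)s}+\frac{Kn}{\alpha}
\]
through by $u_s$ and summing against $\mu$ --- invoking the identity $\sqrt{u}\,\partial_s\sqrt{u} = \tfrac{1}{2}\Delta u$ together with $\sum_x \mu(x)\Delta u_s(x)=0$ (self-adjointness of $\Delta$ on a finite graph with symmetric weights) --- yields the integrated Dirichlet energy bound
\[
\sum_x \mu(x)\,\Gamma(\sqrt{u_s})(x)\leq \frac{\|\mathbf{1}_S+\varepsilon\|_1}{1-\alpha}\left(\frac{n}{2(1-\alpha)s}+\frac{Kn}{\alpha}\right).
\]

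Now introduce the escaped mass $M_\varepsilon(t) := \sum_{x\notin S}\mu(x)\bigl(u_t(x)-\varepsilon\bigr)$. By symmetry of the weights, $M_\varepsilon(t) = \int_0^t \sum_{xy\in\partial S}w_{xy}(u_s(x)-u_s(y))\,ds$. A Cauchy--Schwarz on the $\partial S$-edges, combined with the pointwise bound $\Gamma(u_s)\leq 4\|u_s\|_\infty \Gamma(\sqrt{u_s})$ that converts to the square-root level, and then the energy bound above, gives an upper bound on $M_\varepsilon(t)$ in terms of $\vol(S)$, $|\partial S|$, $t$ and $K$. In parallel, expanding $\mathbf{1}_S$ in an orthonormal basis $\{\phi_k\}$ of eigenfunctions of $\Delta$, and using $\langle \phi_k,\mathbf{1}_{V\setminus S}\rangle=-\langle \phi_k,\mathbf{1}_S\rangle$ for $k\geq 1$, produces (after $\varepsilon\to 0$) the spectral lower bound $M_0(t)\geq \tfrac{1}{2}(1-e^{-\lambda_1 t})\vol(S)$. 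Equating the two and taking $\lambda_1 t$ of order unity delivers the stated inequality.

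The main obstacle lies in extracting the correct power of $h$ in the $M_\varepsilon$ upper bound. A direct Cauchy--Schwarz against $|\partial S| = \sum_{xy\in\partial S} w_{xy}$ produces only a factor $\sqrt{h}$ and thus at best $\lambda_1 \lesssim h$, which is no stronger than the trivial test-function bound. To recover the sharp $h^2$ Buser scaling one must exploit the concentration of $u_s-\mathbf{1}_S$ near $\partial S$ for small $s$, most likely by splitting the time integral into an initial window where $M_\varepsilon'(s)$ is controlled pointwise by $|\partial S|$ and a later window where the Li-Yau--integrated energy bound takes over. The explicit constant $C=8\bigl(3\mu_{\mathrm{max}}(2-\alpha)n/(\alpha(1-\alpha)^2)\bigr)^{1/2}$ then emerges from careful bookkeeping through each Cauchy--Schwarz step.
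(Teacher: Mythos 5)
Your high-level plan (Ledoux/Varopoulos semigroup method, spectral lower bound on the heat leakage, optimize in $t$) is the right one and matches the paper, and your spectral lower bound $M_0(t)\geq \tfrac12(1-e^{-\lambda_1 t})\vol(S)$ is essentially identity \eqref{eq:buser4}. But the upper bound on the leaked mass --- the half of the argument that actually carries the content of Buser's inequality --- is not established. You candidly note that a direct Cauchy--Schwarz of the boundary sum against the integrated Dirichlet energy $\sum_x\mu(x)\Gamma(\sqrt{u_s})(x)$ loses a square root and only yields $\lambda_1\lesssim h$, and your proposed repair (``splitting the time integral into an initial window \dots and a later window'') is a conjecture, not a proof. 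As written, the argument does not reach the stated conclusion.

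The missing idea is to \emph{dualize} rather than to estimate the escaped mass directly. The paper first upgrades the integrated Li--Yau estimate to an $L^1$ bound $\|\Delta P_tf\|_1\leq \tfrac{4c}{t}\|f\|_1$ (by observing that the gradient estimate controls the negative part of $\Delta P_tf/P_tf$ pointwise, while the total integral of $\Delta P_tf$ vanishes), and then uses self-adjointness of $\Delta P_t$ to transfer this to $\|\Delta P_tg\|_\infty\leq\tfrac{4c}{t}\|g\|_\infty$; feeding that back into the gradient estimate gives the sup-norm bound $\||\nabla P_sg|\|_\infty\lesssim s^{-1/2}\|g\|_\infty$ (Lemma~\ref{lem:infnormest}). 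This is then used in the dual pairing
\begin{equation*}
\sum_x\mu(x)\,g(x)\,(f-P_tf)(x)=\int_0^t\sum_x\mu(x)\,\Gamma(P_sg,f)(x)\,ds
\leq\int_0^t\||\nabla P_sg|\|_\infty\,\||\nabla f|\|_1\,ds,
\end{equation*}
where Cauchy--Schwarz is applied \emph{vertex by vertex} so that the $L^1$ norm of $|\nabla f|$ appears. For $f=\chi_U$ one has $\||\nabla\chi_U|\|_1\lesssim\sqrt{\mu_{\mathrm{max}}}\,|\partial U|$ to the \emph{first} power, which is precisely what your global Cauchy--Schwarz against the energy fails to produce. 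This yields the pseudo-Poincar\'e inequality $\|\chi_U-P_t\chi_U\|_1\lesssim\sqrt{t}\,|\partial U|$ (Lemma~\ref{lem:onenorm}), and combining it with your spectral lower bound and the choice $t_0=K^{-1}$, $t=\min\{\lambda_1^{-1},K^{-1}\}$ finishes the proof. Without the duality step $\|\Delta P_t\|_{1\to1}=\|\Delta P_t\|_{\infty\to\infty}$ and the resulting $L^\infty$ gradient bound, the correct power of $h$ is out of reach by your route.
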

\begin{remark} 
{\hspace{0.1in}}
\begin{itemize}
\item By using Theorem \ref{thm:compactgre} instead of Theorem \ref{thm:compactgrecurv}, one obtains the same statement in the case of $K=0$ where now the constant $C$ is given by $C=8\sqrt{3n\mu_{\mathrm{max}}}.$
\item The Cheeger inequality states that $\frac{h^2}{2D_\mu}\leq\lambda_1$. Thus in particular if $K= 0$,  Theorem \ref{thm:buser} implies that  $\frac{h^2}{2D_\mu}\leq \lambda_1 \leq 4 C^2 h^2$ , i.e. $\lambda_1$ is of the order $h^2$.
\item Klartag and Kozma~\cite{KK} show a similar but stronger result for graphs satisfying the original CD-inequality. Namely they prove, following the arguments of Ledoux \cite{Ledoux2}, that if a finite graphs satisfies  $CD(\infty, -K)$ then 
\[ \lambda_1 \leq 8\max\{ \sqrt{K} h, h^2\}. \]  
Note that their condition does not involve dimension, and hence their constant is also dimension independent.
\end{itemize}
\end{remark}
We divide the proof into several different steps, closely following Ledoux's \cite{Ledoux} argument on compact manifolds. The proof of the following lemma is based on ideas by Varopoulos~\cite{Var}.
\begin{lemma}\label{lem:infnormest} Let
$G$ be a finite graph satisfying $CDE(n,-K)$ for some $K > 0$, and let $P_tf$ be a positive solution to the heat equation on $G$. Fix $0 < \alpha < 1$ and let
$0<t\leq t_0$ then
\[\|\Gamma(P_tf)\|_\infty \leq \frac{12 c}{(1-\alpha)t}\|f\|_\infty^2, \]
where $c =  \frac{n}{2(1-\alpha)} + \frac{Kn}{\alpha}t_0$.

\end{lemma}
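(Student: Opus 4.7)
The plan is to apply Theorem~\ref{thm:compactgrecurv} simultaneously to $u := P_tf$ and to the auxiliary function $w := \|f\|_\infty - P_tf$, and then to convert $\Gamma(\sqrt{u})$ into $\Gamma(u)$ via an elementary pointwise inequality. By linearity of the heat equation one may reduce to the case $f \geq 0$ non-constant (a constant $f$ gives $\Gamma(P_tf) \equiv 0$; a real-valued $f$ can be replaced by $f + \|f\|_\infty \geq 0$, leaving $\Gamma(P_tf)$ unchanged at the cost of an absolute factor). On a connected finite graph the heat kernel is strictly positivity-improving, so both $u$ and $w = P_t(\|f\|_\infty - f)$ are strictly positive solutions of $\LL v = 0$ for $t > 0$ (disconnected graphs are handled component-wise).

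For $0 < t \leq t_0$ one has $\frac{n}{2(1-\alpha)t} + \frac{Kn}{\alpha} \leq \frac{c}{t}$, so Theorem~\ref{thm:compactgrecurv} applied to $u$, together with $\sqrt{u}\,\partial_t\sqrt{u} = \frac{1}{2}\partial_t u = \frac{1}{2}\Delta u$, gives the pointwise bound
\[
(1-\alpha)\Gamma(\sqrt{u}) \;\leq\; \frac{cu}{t} + \frac{1}{2}\Delta u.
\]
Applied instead to $w$, and noting that $\partial_t \sqrt{w}/\sqrt{w} = \Delta w/(2w) = -\Delta u/(2w)$, the same theorem yields
\[
\frac{\Delta u}{2w} + (1-\alpha)\frac{\Gamma(\sqrt{w})}{w} \;\leq\; \frac{c}{t}.
\]
Discarding the non-negative gradient term and using $w \leq \|f\|_\infty$ produces the one-sided bound $\Delta u \leq 2c\|f\|_\infty/t$, which controls the $\Delta u$ term appearing in the first display.

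The last ingredient is the pointwise inequality $\Gamma(v) \leq 4\|v\|_\infty\,\Gamma(\sqrt{v})$ valid for any $v \geq 0$, which follows at once from
\[
(v(y)-v(x))^2 = \bigl(\sqrt{v(y)}-\sqrt{v(x)}\bigr)^2\bigl(\sqrt{v(y)}+\sqrt{v(x)}\bigr)^2
\]
combined with $(\sqrt{v(y)}+\sqrt{v(x)})^2 \leq 4\|v\|_\infty$. Combining this with the two inequalities above and $\|u\|_\infty \leq \|f\|_\infty$,
\[
(1-\alpha)\Gamma(u) \;\leq\; 4\|f\|_\infty (1-\alpha)\Gamma(\sqrt{u})
\;\leq\; 4\|f\|_\infty\!\left(\frac{c\|f\|_\infty}{t} + \frac{c\|f\|_\infty}{t}\right)
\;=\; \frac{8c\|f\|_\infty^2}{t},
\]
which gives $\|\Gamma(P_tf)\|_\infty \leq \tfrac{8c}{(1-\alpha)t}\|f\|_\infty^2$, comfortably within the stated bound with constant $12$ (the extra slack easily absorbs the factor lost in reducing to the non-negative case).

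The main obstacle is precisely the need for a \emph{one-sided upper} bound on $\Delta P_tf$ of the correct order $1/t$: the gradient estimate by itself yields only a matching lower bound on $\partial_t u = \Delta u$ (that is the sign dictated by the curvature-dimension inequality), and the crude estimate $|\Delta u| \leq 2 D_\mu \|f\|_\infty$ exhibits no decay in $t$ at all. The key observation that unblocks the argument is that $\|f\|_\infty - P_tf$ is again a positive solution of the heat equation, so applying the very same gradient estimate to this ``reflected'' solution converts the missing upper bound on $\Delta u$ into a lower bound on $-\Delta u$ of precisely the needed form.
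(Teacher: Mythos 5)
Your argument is correct, but it reaches the conclusion by a genuinely different route than the paper. The paper also starts from the observation that the gradient estimate only controls the \emph{negative} part of $\Delta P_tf$, but it resolves this by integration and duality (Varopoulos's semigroup technique): from $(\Delta P_tf)^-\leq \tfrac{2c}{t}P_tf$ pointwise and $\sum_x\mu(x)\Delta P_tf(x)=0$ one gets $\|\Delta P_tf\|_1\leq\tfrac{4c}{t}\|f\|_1$, and then self-adjointness of $\Delta P_t$ together with $\|T\|_{1\to1}=\|T^*\|_{\infty\to\infty}$ converts this into $\|\Delta P_tf\|_\infty\leq\tfrac{4c}{t}\|f\|_\infty$, which is fed back into the gradient estimate to give $(1-\alpha)\|\Gamma(\sqrt{P_tf})\|_\infty\leq\tfrac{3c}{t}\|f\|_\infty$ and hence the constant $12$. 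Your reflection trick --- applying the gradient estimate to $\|f\|_\infty-P_tf$, which is again a nonnegative solution because $\Delta$ annihilates constants --- obtains the missing one-sided pointwise bound $\Delta P_tf\leq \tfrac{2c}{t}\|f\|_\infty$ directly, bypassing the $L^1$ estimate and the duality step entirely; both proofs then finish with the same elementary inequality $\Gamma(v)\leq 4\|v\|_\infty\Gamma(\sqrt v)$. Your route is more elementary and pointwise, and even yields the slightly better constant $8$ in place of $12$; the paper's route is the one that survives in settings where one cannot reflect (e.g.\ when constants are not preserved or the relevant solution is not dominated by a constant), and its intermediate output $\|\Delta P_t\|_{1\to1}\lesssim c/t$ is of independent use.

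Two small caveats. First, $\|f\|_\infty-f$ vanishes wherever $f$ attains its maximum, so the reflected solution is not strictly positive at $t=0$; positivity-improving only covers $t>0$, whereas Theorem~\ref{thm:compactgrecurv} is applied on all of $V\times[0,T]$. The clean fix is to apply the theorem to $\|f\|_\infty+\epsilon-P_tf$ and let $\epsilon\to 0$ at the end. Second, your preliminary reduction to $f\geq 0$ is unnecessary --- the hypothesis that $P_tf$ is a positive solution already forces $f=P_0f>0$ --- and it is just as well, since replacing $f$ by $f+\|f\|_\infty$ inflates $\|f\|_\infty^2$ by a factor of $4$ and would push your constant from $8$ to $32$, past the stated $12$; as the lemma is actually used (and stated), this never arises.
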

\begin{proof}
On the one hand by the gradient estimate Theorem  \ref{thm:compactgrecurv}  and $t \leq t_0$
\[ \frac{(1-\alpha)\Gamma(\sqrt{P_tf})}{P_tf} - \frac{\Delta P_tf}{2P_tf} \leq  \frac{n}{2(1-\alpha)t} +  \frac{Kn}{\alpha}\frac{t_0}{t} =:\frac{c}{t}.\]
Since $ \frac{(1-\alpha)\Gamma(\sqrt{P_tf})}{P_tf}\geq 0$ and the estimate is trivial if $\frac{\Delta P_tf}{2P_tf}\geq 0$ we conclude that
\begin{equation}\label{eq:buser1}\left(\frac{\Delta P_tf}{2P_tf}\right)^-\leq \frac{c}{t},\end{equation} where $(\quad )^\pm$ denotes the positive and negative part, respectively.
Note that $0= \sum_{x\in V}\mu(x)\Delta P_tf(x) =  \sum_{x\in V}\mu(x)\left(\Delta P_tf\right)^+(x) - \mu(x)\left(\Delta P_tf\right)^-(x)$ which implies
\begin{equation}\label{eq:buser2}\sum_{x\in V}\mu(x)\left(\Delta P_tf\right)^-(x) = \frac{1}{2}\sum_{x\in V}\mu(x)(\left(\Delta P_tf\right)^-(x) +\left(\Delta P_tf\right)^+(x))= \frac{1}{2} \|\Delta P_tf\|_1.\end{equation}
Moreover since $\sum_{x\in V}\mu(x) P_tf(x) = \sum_{x\in V}\mu(x) f(x)$ and $f>0$ it follows from \eqref{eq:buser1} and \eqref{eq:buser2} that
\begin{equation}\label{eq:1norm}\frac{1}{4} \|\Delta P_tf\|_1  = \frac{1}{2}\sum_{x\in V} \mu(x)\left(\Delta P_t f\right)^- \leq \frac{c}{t}\sum_{x\in V}\mu(x)P_tf(x) = \frac{c}{t}\|f\|_1.\end{equation}
It is well know that for bounded linear operators $T: \ell^p \to \ell^q$  and their dual operators $T^*:\ell^{q^*}\to\ell^{p^*}$  it holds that 
$$\|T\|_{ \ell^p \to \ell^q} =\|T^*\|_{\ell^{q^*}\to\ell^{p^*}}$$ where 
$$\|T\|_{A\to B} := \sup_{f\in A}\frac{\|Tf\|_B}{\|f\|_A}$$ and $p$ and $p^*$ are H\"older conjugate exponents, i.e. $\frac{1}{p} + \frac{1}{p^*}=1$. Sine $\Delta P_t$ is self-adjoint we have for all $f$
$$\frac{\|\Delta P_tf\|_\infty}{\|f\|_\infty}\leq \|\Delta P_t\|_{\infty\to \infty}= \|\Delta P_t\|_{1\to 1} = \sup_{g\in\ell^1}\frac{\|\Delta P_tg\|_1}{\|g\|_1} \leq \frac{4c}{t}.$$
On the other hand it follows from the gradient estimate by applying the infinity norm on both sides that
\begin{eqnarray}\label{eq: infnormest}
(1-\alpha)\|\Gamma(\sqrt{P_tf})\|_\infty&\leq & \nonumber \frac{1}{2}\|\Delta P_tf\|_\infty + \frac{c}{t}\|P_t f\|_\infty\\&\leq&
 \frac{2c}{t}\|f\|_\infty +  \frac{c}{t}\|f\|_\infty =  \frac{3c}{t}\|f\|_\infty
\end{eqnarray} where we used \eqref{eq:1norm} and $\|P_tf\|_\infty \leq \|P_0f\|_\infty = \|f\|_\infty$ for all $t>0$.
Now the proof is almost complete, we only need to estimate $\Gamma(\sqrt{P_tf})$ by $\Gamma(P_tf)$. It is easy to see that
$\Gamma(u)\leq 4 \|u\|_\infty\Gamma(\sqrt{u})$ for all positive functions $u>0$. Indeed,
\begin{eqnarray*}\Gamma(u)(x) &=&  \frac{1}{2\mu(x)}\sum_{y\sim x}w_{xy}\left(u(x) -u(y)\right)^2 \\ &=& \frac{1}{2\mu(x)}\sum_{y\sim x}w_{xy}\left(\sqrt{u(x)} -\sqrt{u(y)}\right)^2 \left(\sqrt{u(x)} +\sqrt{u(y)}\right)^2\\& \leq&   4 \|u\|_\infty\Gamma(\sqrt{u}).
\end{eqnarray*}
Using this in \eqref{eq: infnormest} we obtain
\[\|\Gamma(P_tf)\|_\infty \leq \frac{12 c}{(1-\alpha)t}\|f\|_\infty^2, \]
 which finishes the proof.
\end{proof}
\begin{remark}
Using the notation  $|\nabla f| = \sqrt{\Gamma(f)}$ the statement of the last lemma is equivalent to
\begin{equation}\label{eq: infnormest2} \| |\nabla P_tf|\|_\infty \leq 2\sqrt{\frac{3 c}{(1-\alpha)t}}\|f\|_\infty.\end{equation}
\end{remark}
\begin{lemma}\label{lem:onenorm}
Let $G$ be a finite graph satisfying $CDE(n,-K)$ for some $K > 0$, and let $P_tf$ be a positive solution to the heat equation on $G$. Fix $0 < \alpha < 1$ and let $0<t\leq t_0$ then
\[\|f-P_tf\|_1\leq 8 \sqrt{\frac{3c}{1-\alpha}}\||\nabla f| \|_1 \sqrt{t},\] where $c$ is the constant in Lemma \ref{lem:infnormest}.
\end{lemma}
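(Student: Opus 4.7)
The strategy is to dualize the $L^1$-norm, shift the heat flow onto the dual test function via self-adjointness of $P_s$ (which holds under the assumed symmetric weights), and then feed the result into the $L^\infty$ gradient estimate~\eqref{eq: infnormest2}. I would start from
\[
\|f - P_tf\|_1 = \sup\bigl\{\langle f - P_tf, g\rangle_\mu : \|g\|_\infty \leq 1\bigr\},
\]
where $\langle u, v\rangle_\mu := \sum_{x \in V}\mu(x)u(x)v(x)$. Since the heat semigroup preserves the measure $\mu$, one has $\langle f - P_tf, 1\rangle_\mu = 0$, so for each such $g$ and each $\epsilon > 0$ I may replace $g$ by $g_\epsilon := g + 1 + \epsilon$ without changing the pairing. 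The crucial point is that $g_\epsilon > 0$ and $\|g_\epsilon\|_\infty \leq 2 + \epsilon$, which permits Lemma~\ref{lem:infnormest} to be applied to $P_sg_\epsilon$.

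\textbf{The main identity.} Using self-adjointness of $P_t$, the fundamental theorem of calculus in the parameter $s$, and the discrete integration-by-parts identity $\langle u, \Delta v\rangle_\mu = -\sum_x \mu(x)\Gamma(u,v)(x)$ (which is immediate from symmetry of the weights), I would derive
\[
\langle f - P_tf, g_\epsilon\rangle_\mu = -\int_0^t \langle f, \Delta P_s g_\epsilon\rangle_\mu\, ds = \int_0^t \sum_{x \in V}\mu(x)\,\Gamma(f, P_s g_\epsilon)(x)\, ds.
\]
Pointwise Cauchy--Schwarz for the bilinear form $\Gamma$ gives $\Gamma(f, P_sg_\epsilon)(x) \leq |\nabla f|(x)\, |\nabla P_sg_\epsilon|(x)$, and then a standard H\"older-type bound on the spatial sum yields
\[
\bigl|\langle f - P_tf, g_\epsilon\rangle_\mu\bigr| \leq \| |\nabla f|\|_1 \int_0^t \| |\nabla P_s g_\epsilon|\|_\infty\, ds.
\]

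\textbf{Applying the gradient estimate and concluding.} Because $g_\epsilon$ is strictly positive and $0 < s \leq t \leq t_0$, Lemma~\ref{lem:infnormest} in its equivalent form~\eqref{eq: infnormest2} applies to $P_s g_\epsilon$ and gives $\| |\nabla P_s g_\epsilon|\|_\infty \leq 2\sqrt{3c/((1-\alpha)s)}\,\|g_\epsilon\|_\infty$. Substituting, using $\int_0^t s^{-1/2}\,ds = 2\sqrt{t}$, and then letting $\epsilon \to 0$ followed by taking the supremum over $\|g\|_\infty \leq 1$, I obtain
\[
\|f - P_tf\|_1 \leq 8\sqrt{\tfrac{3c}{1-\alpha}}\,\| |\nabla f|\|_1\,\sqrt{t},
\]
which is the claimed bound.

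\textbf{Main obstacle.} The only genuinely delicate point is that Lemma~\ref{lem:infnormest} requires a strictly positive solution, whereas the natural dual test functions are only bounded. The shift $g \mapsto g+1+\epsilon$ resolves this cleanly: additive constants are invisible both in the pairing against $f - P_tf$ (by mass conservation) and in the gradient form $\Gamma$, while the resulting sup-norm bound $\|g_\epsilon\|_\infty \leq 2 + \epsilon$ is what produces the factor $8$ (rather than $4$) in the final constant.
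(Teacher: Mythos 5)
Your proposal is correct and follows essentially the same route as the paper: the same key identity obtained from self-adjointness of $P_s$ and summation by parts, followed by Cauchy--Schwarz for $\Gamma$, the $L^\infty$ gradient bound \eqref{eq: infnormest2}, and the shift $g\mapsto g+1+\epsilon$ to make the test function positive with sup-norm tending to $2$. The only (harmless) cosmetic difference is that you invoke mass conservation to justify the shift via $\langle f-P_tf,1\rangle_\mu=0$ and then take a supremum over dual test functions, whereas the paper picks $g=\mathrm{sgn}(f-P_tf)+1+\epsilon$ directly and splits into two cases according to the sign of $\sum_x\mu(x)(f-P_tf)(x)$.
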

\begin{proof}
For any positive function $g$ we have
\begin{multline*}
\sum_{x\in V} \mu(x) g(x)(f - P_tf)(x)  =  \sum_{x\in V} \mu(x) g(x)(P_0f - P_tf)(x) = \\ -\int_0^t \sum_{x\in V} \mu(x) g(x)\frac{\partial}{\partial s}P_sf(x)ds  =    -\int_0^t
\sum_{x\in V} \mu(x) g(x)\Delta P_sf(x) ds= \\ -\int_0^t \sum_{x\in V} \mu(x) P_sg(x)\Delta f(x)ds  = \int_0^t \sum_{x\in V} \mu(x) \Gamma(P_s g, f)(x)ds,
\end{multline*} where we used that $P_s = e^{s\Delta}$ is self-adjoint, $P_s$ commutes with $\Delta$, and summation by parts.
Applying Cauchy-Schwarz  and H\"older we obtain
\begin{eqnarray*}
\sum_{x\in V} \mu(x) g(x)(f - P_tf)(x) & \leq&   \int_0^t \sum_{x\in V} \mu(x) |\nabla P_s g|(x)  |\nabla f|(x)ds\\&\leq &
 \int_0^t \||\nabla P_s g|\|_\infty  \||\nabla f|\|_1ds.
\end{eqnarray*}
Applying \eqref{eq: infnormest2} yields
\begin{multline}\label{eq: infnormest3}
\sum_{x\in V} \mu(x) g(x)(f - P_tf)(x) \leq \\  \int_0^t \sqrt{\frac{12 c}{1-\alpha}}\frac{1}{\sqrt{s}}\|g\|_\infty  \||\nabla f|\|_1ds\leq 4 \sqrt{\frac{3c}{1-\alpha}}\|g\|_\infty \||\nabla f| \|_1 \sqrt{t}.\end{multline}
Now assume for the moment that $\sum_{x\in V} \mu(x)(f-P_tf)(x)\geq 0$. 
We choose $g = \mathrm{sgn}(f - P_t f) + 1+\epsilon$ for some $\epsilon >0$ such that $g$ is positive and 
\begin{eqnarray*}
\|f-P_tf\|_1 &\leq& \sum_{x\in V}\mu(x)|f-P_tf|(x) +(1+\epsilon) \sum_{x\in V} \mu(x)(f-P_tf)(x) 
\\ &= &\sum_{x\in V} \mu(x) g(x)(f - P_tf)(x)\leq 
(1+\epsilon)8 \sqrt{\frac{3c}{1-\alpha}} \||\nabla f| \|_1 \sqrt{t}
 \end{eqnarray*}
 where we used \eqref{eq: infnormest3} and $\|g\|_\infty=2$. Taking $\epsilon \to 0$ completes the proof. 
If $\sum_{x\in V} \mu(x)\cdot(f-P_tf)(x)< 0$ then we choose $g = \mathrm{sgn}(P_t f-f) + 1+\epsilon$ and the proof is completed in the same way as above. 
\end{proof}
With these preparations we can now prove Theorem \ref{thm:buser}.
\begin{proof}[Proof of Theorem \ref{thm:buser}]
We want to apply Lemma \ref{lem:onenorm} to the characteristic function $\chi_U$ of any subset $U$.
The left hand side becomes
\begin{eqnarray}\label{eq:buser3}\nonumber 8 \sqrt{\frac{3c}{1-\alpha}}\||\nabla \chi_U| \|_1 \sqrt{t} &=& 8 \sqrt{\frac{3c}{1-\alpha}}\sqrt{t}\sum_{x\in V}\mu(x) \sqrt{\frac{1}{2\mu(x)}\sum_{y\sim x}w_{xy}(\chi_U(y)-\chi_U(x))^2}\\&\leq& \nonumber
 8 \sqrt{\frac{3c}{1-\alpha}}\sqrt{t}\sum_{x\in V}\sqrt{\frac{\mu(x)}{2}} \sum_{y\sim x}w_{xy}|\chi_U(y)-\chi_U(x)|
\\&\leq&  8 \sqrt{\frac{3c}{1-\alpha}}\sqrt{t}\sqrt{2\mu_{\mathrm{max}}}|\partial U|\end{eqnarray}  where $\mu_{\mathrm{max}}=\max_{x\in V}\mu(x)$.

The right hand side becomes:
\begin{eqnarray*}
\|\chi_U - P_t\chi_U\|_1 &=& \sum_{x\in U}\mu(x)|\chi_U(x) - P_t\chi_U(x)| +  \sum_{x\in V\setminus  U}\mu(x)|\chi_U(x) - P_t\chi_U(x)|
\\&=&   \sum_{x\in U}\mu(x)(1- P_t\chi_U(x)) +  \sum_{x\in V\setminus  U}\mu(x) P_t\chi_U(x)\\&=& 2( \mathrm{vol}(U) - \sum_{x\in U}\mu(x) P_t\chi_U(x))\\&=&2( \|\chi_U\|_2^2 - \|P_{t/2}\chi_U\|_2^2)
\end{eqnarray*} where we used that $P_{\frac{t}{2}}P_{\frac{t}{2}} =P_t$, $P_t\chi_U \leq 1$,  $\mathrm{vol(U)} =  \sum_{x\in U}\mu(x) P_t\chi_U(x)+\\ $ \mbox{$+\sum_{x\in V\setminus U} \mu(x)P_t\chi_U(x)$} and the fact that $P_t$ is self-adjoint.
Let $\{\psi_i\}_{i=0}^{N-1}$ ($N$ is the number of vertices in the graph) be an orthonormal basis of eigenfunctions, i.e. 
$$(\psi_i,\psi_j) = \sum_{x\in V}\mu(x) \psi_i(x)\psi_j(x) = \delta_{ij}.$$ In particular the eigenfunction
corresponding to the trivial eigenvalue $\lambda_0 =0$ is given by $\psi_0 = \frac{1}{\sqrt{\mathrm{vol}(V)}}$. Then every function $f:V\to \mathbb{R}$  can be
expanded in the basis $\{\psi_i\}$, i.e. $f = \sum_{i=0}^{N-1}\alpha_i\psi_i$, where $\alpha_i =(f,\psi_i)= \sum_{x\in V}\mu(x) f(x)\psi_i(x)$. For the
characteristic function this gives $\chi_U =  \sum_{i =0}^{N-1}\alpha_i\psi_i$ with $\alpha_0 = \sum_{x\in V}\mu(x) \chi_U \frac{1}{\sqrt{\mathrm{vol}(V)}}=
\frac{\mathrm{vol}(U)}{\sqrt{\mathrm{vol}(V)}}$. Since the $\psi_i$ form an orthonormal basis we have
\[\|\chi_U\|_2^2 = \sum_{x\in V}\mu(x) \sum_{i=0}^{N-1}\alpha_i^2\psi_i^2(x)=  \sum_{i=0}^{N-1}\alpha_i^2= \mathrm{vol}(U).\] By the spectral theorem,
\[P_{t}(\chi_U) = \sum_{i=0}^{N-1}e^{-\lambda_i t}\alpha_i\psi_i \] and thus
\[ \|P_{t/2}\chi_U\|_2^2=  \sum_{i=0}^{N-1} e^{-\lambda_i t}\alpha_i^2\leq  e^{-\lambda_1 t}\sum_{i=1}^{N-1}\alpha_i^2 + \alpha_0^2.\]Combining everything we obtain
\begin{equation}\label{eq:buser4}2( \|\chi_U\|_2^2 - \|P_{t/2}\chi_U\|_2^2 \geq 2(1-e^{-\lambda_1t})\sum_{i=1}^{N-1}\alpha_i^2 = 2(1-e^{-\lambda_1t})\left(\mathrm{vol(U) -\frac{\mathrm{vol}(U)^2}{\mathrm{vol}(V)}}\right). \end{equation}
From now on we choose $t_0 = K^{-1}$. The reason is that for this particular choice the constant $c$ is independent of the curvature bound $K$. From \eqref{eq:buser3} and \eqref{eq:buser4}  we have for all $0< t\leq K^{-1}$ and all subsets $U$ of $V$ for which $\mathrm{vol}(U)\leq \frac{1}{2}\mathrm{vol}(V)$
\[\frac{|\partial(U)|}{\mathrm{vol(U)}}\geq \frac{ (1-e^{-\lambda_1t})}{C\sqrt{t}},\] where\[C= 8 \left(\frac{6c \mu_{\mathrm{max}}}{1-\alpha}\right)^{\frac{1}{2}}.\]
 Since this is true for every subset $U\subset V$ and $0<t<K^{-1}$ this implies
\[h\geq \frac{1}{C}\sup_{0<t\leq K^{-1}} \frac{ (1-e^{-\lambda_1t})}{\sqrt{t}}.\]
Now if $\lambda_1\geq K$, we choose $t = \frac{1}{\lambda_1}$ which yields
\[h \geq \frac{1}{C}(1 - \frac{1}{e})\sqrt{\lambda_1}\geq \frac{1}{2C}\sqrt{\lambda_1},\]
while if $\lambda_1\leq K$ we take $t = K^{-1}$ which yields
\[h \geq \frac{1}{C}\sqrt{K}(1- e^{-\frac{\lambda_1}{K}})\geq \frac{1}{2C\sqrt{K}}\lambda_1.\] This yields
\[\lambda_1 \leq \max\{2C \sqrt{K}h, 4C^2 h^2\}\] which completes the proof.
\end{proof}
\noindent\textbf{Acknowledgements.}
We are very grateful to Bo'az Klartag and Gady Kozma for showing us their unpublished manuscript~\cite{KK} and introducing us to the work of Ledoux.
Bauer was partially supported by the Alexander von Humboldt foundation and partially supported by the NSF Grant DMS-0804454 Differential Equations in Geometry.
Horn, Lippner and Yau were supported by AFOSR Grant FA9550-09-1-0090.
Lin was partially supported National Natural Science Foundation of China, Grant No.~11271011.
Lippner, Mangoubi and Yau would like to acknowledge  the support of BSF grant no.~2010214.

\noindent
Frank Bauer, Paul Horn, Gabor Lippner, Shing-Tung Yau,\\
Department of Mathematics, Harvard University, Cambridge, Massachusetts \\ 
\texttt{[fbauer, phorn, lippner, yau]@math.harvard.edu} 

\smallskip
\noindent
Yong Lin,\\ 
Department of Mathematics, Renmin University of China, Beijing, China \\
\texttt{linyong01@ruc.edu.cn} 

\smallskip
\noindent
Dan Mangoubi,\\ 
Department of Mathematics, Hebrew University, Jerusalem, Israel\\
\texttt{mangoubi@math.huji.ac.il}

\end{document}